\tikzset{cross/.style={cross out, draw=black, minimum size=2*(#1-\pgflinewidth), inner sep=0pt, outer sep=0pt}, cross/.default={1pt}}
\numberwithin{equation}{section}
\theoremstyle{plain}
\newtheorem{thm}{Theorem}[section]
\newtheorem{lem}[thm]{Lemma}
\newtheorem{prop}[thm]{Proposition}
\newtheorem{cor}[thm]{Corollary}
\newtheorem*{prop*}{Proposition}
\theoremstyle{definition}
\newtheorem{exmp}[thm]{Example}
\theoremstyle{remark}
\newtheorem{rem}[thm]{Remark}
\newtheorem*{rem*}{Remark}
\newtheorem*{ack}{Acknowledgements}
\newcommand{\be}{\begin{equation}}    
\newcommand{\ee}{\end{equation}}    
\newcommand{\beu}{\begin{equation*}}    
\newcommand{\eeu}{\end{equation*}}    
\newcommand{\bea}{\begin{eqnarray}}    
\newcommand{\eea}{\end{eqnarray}}    
\newcommand{\beaa}{\begin{eqnarray*}}    
\newcommand{\eeaa}{\end{eqnarray*}}    
\newcommand{\bmx}{\begin{pmatrix}}    
\newcommand{\emx}{\end{pmatrix}}
\newcommand{\del}{\partial}    
\newcommand{\g}{{\mathfrak g}}
\newcommand{\n}{{\mathfrak n}}    
\newcommand{\h}{{\mathfrak h}}
\newcommand{\p}{{\mathfrak p}}
\newcommand{\Heis}{H}
\newcommand{\mf}{\mathfrak}
\newcommand{\mc}{\mathcal}    
\newcommand{\gh}{{\widehat \g}}    
\newcommand{\nG}{{\n_{\langle G_i\rangle}}}
\newcommand{\gs}{\g^{\Gamma}}
\newcommand{\half}{\frac{1}{2}}
\newcommand{\nn}{\nonumber}
\newcommand{\8}{{\infty}}
\newcommand{\eps}{\epsilon}    
\newcommand{\tr}{\,{\rm tr}}    
\newcommand{\rank}{{\rm rank}}
\newcommand{\ad}{{\rm ad}}
\newcommand{\Z}{{\mathbb Z}}
\newcommand{\C}{{\mathbb C}}
\newcommand{\id}{{\mathrm{id}}}
\newcommand{\goi}[2]{=}    
\newcommand{\Hom}{\mathrm{Hom}}
\newcommand{\on}{.}    
\newcommand{\Cx}{\mathbb C^\times}
\newcommand{\btp}{\begin{tikzpicture}[baseline=0pt,scale=0.9,line width=0.25pt]}    
\newcommand{\etp}{\end{tikzpicture}}
\newcommand{\atp}[1]{}
\newcommand{\path}{\longrightarrow}
\newcommand{\ha}{\mbox{\small $\frac{1}{2}$}}
\DeclareMathOperator{\res}{res}
\newcommand*{\longhookrightarrow}{\ensuremath{\lhook\joinrel\relbar\joinrel\rightarrow}}
\newcommand{\VV}{{\mathbb V}}
\newcommand{\Vcrit}{{\mathbb V_0^{-h^\vee}}}
\newcommand{\WW}{{\mathbb W}}
\newcommand{\MM}{{\mathbb M}}
\newcommand{\M}{\mathcal M}
\newcommand{\vac}{v_0}
\newcommand{\wac}{w_0}
\newcommand{\resd}{\vee}
\newcommand{\pdd}{{{\frac{1}{(n-1)!}\frac{\del^{n-1}}{\del u^{n-1}}}}}
\newcommand{\lsigma}{L_{\sigma}}
\newcommand{\Gaud}{\mathscr Z}
\newcommand{\Mh}{\mathsf M} 
\newcommand{\ie}{\textit{i.e. }}
\DeclareMathOperator{\Ind}{Ind}
\DeclareMathOperator{\Coind}{Coind}
\DeclareMathOperator{\End}{End}
\DeclareMathOperator{\Vect}{Vect}
\DeclareMathOperator{\Weyl}{Weyl}
\DeclareMathOperator{\Der}{Der}
\author{Beno\^{\i}t Vicedo} 
\author{Charles Young}
\address{\vspace{-.15cm} 
School of Physics, Astronomy and Mathematics, University of Hertfordshire, College Lane, Hatfield AL10 9AB, UK.}  
\email{benoit.vicedo@gmail.com}  
\email{charlesyoung@cantab.net}
\begin{document} 
\title[Cyclotomic Gaudin models: construction and Bethe ansatz]{Cyclotomic Gaudin models:\\ construction and Bethe ansatz}

\begin{abstract} 

To any simple Lie algebra $\g$ and automorphism $\sigma:\g\to \g$ we associate a \emph{cyclotomic Gaudin algebra}. This is a large commutative subalgebra of $U(\g)^{\otimes N}$ generated by a hierarchy of \emph{cyclotomic Gaudin Hamiltonians}. It reduces to the Gaudin algebra in the special case $\sigma = \id$. 

We go on to construct joint eigenvectors and their eigenvalues for this hierarchy of cyclotomic Gaudin Hamiltonians, in the case of a spin chain consisting of a tensor product of Verma modules. To do so we generalize an approach to the Bethe ansatz due to Feigin, Frenkel and Reshetikhin involving vertex algebras and the Wakimoto construction. 
As part of this construction, we make use of a theorem concerning \emph{cyclotomic coinvariants}, which we prove in a companion paper. 

As a byproduct, we obtain a cyclotomic generalization of the Schechtman-Varchenko formula for the weight function.

\end{abstract}
\maketitle
\setcounter{tocdepth}{1}
\tableofcontents

\section{Introduction and overview}
Let $\omega\in \Cx$ be a root of unity of order $T\in \Z_{\geq 1}$, and fix a collection of $N\in \Z_{\geq 1}$ non-zero points $z_i\in \Cx$ whose orbits, under the multiplicative action of $\omega$, are pairwise disjoint: $z_i \neq \omega^k z_j$ for all $k\in \Z_T=\Z/T\Z$ and all $i\neq j$. 
Let $\g$ be a simple Lie algebra over $\C$ and $\sigma$ an automorphism of $\g$ whose order divides $T$.
In this paper we consider the following family of $N$ elements of $U(\g)^{\otimes N}$:
\begin{equation} \label{Ham intro}
\mathcal{H}_i := \sum_{p = 0}^{T-1} \sum_{\substack{j=1\\j \neq i}}^N \frac{I^{a (i)} \sigma^p I_a^{(j)}}{z_i - \omega^{-p} z_j} + \sum_{p = 1}^{T-1} \frac{I^{a (i)} \sigma^p I_a^{(i)}}{(1 - \omega^p) z_i}, \qquad i = 1, \ldots, N,
\end{equation}
where $\{ I^a\}$ is a basis of $\g$, $\{I_a\}$ is its dual basis with respect to a non-degenerate invariant inner product on $\g$, and for any $A \in U(\g)$ we use the standard notation $A^{(i)} := 
1^{\otimes (i-1)} \otimes A \otimes 1^{\otimes (N-i)}$.


We refer to the elements $\mathcal{H}_i \in U(\g)^{\otimes N}$ of \eqref{Ham intro} as \emph{quadratic cyclotomic Gaudin Hamiltonians}. A direct calculation reveals that they commute amongst themselves. In the special case $\sigma = \text{id}$, $T = 1$, the $\mathcal{H}_i$ reduce to the quadratic Hamiltonians of the celebrated Gaudin model \cite{Gaudin,Gbook} associated to the Lie algebra $\g$, namely
\begin{equation}
\mathcal{H}_i^{\rm Gaudin} = \sum_{\substack{j=1\\j \neq i}}^N \frac{I^{a (i)}  I_a^{(j)}}{z_i - z_j} , \qquad i = 1, \ldots, N.\label{Ghams}
\end{equation}
If one assigns to each marked point $z_i$ a $\g$-module $V_i$ then the Gaudin Hamiltonians are represented by a collection of mutually-commuting linear operators in $\End\left(\bigotimes_{i=1}^N V_i\right)$. Physically, one thinks of these Hamiltonians as describing the dynamics of a ``long-range spin-chain'' consisting of $N$ ``spins'' whose pairwise interactions depend rationally on the marked points $z_i$ in the complex plane. In this language, the Hamiltonians \eqref{Ham intro} describe a generalization of the model in which each spin interacts not only with the other spins but also with their images under the multiplicative action of $\omega$ (and in which there is a self-interaction between each spin and its own images). Observe that the kinematics of the model are unaltered: the algebra of observables remains $U(\g)^{\otimes N}$ and the space of states remains $\bigotimes_{i=1}^N V_i$.

In the study of the Gaudin model, the central problem is the spectral problem: one wishes to find joint eigenvalues and eigenvectors of the mutually commuting Hamiltonians. When the $V_i$ are finite-dimensional irreducible representations of $\g$ (and actually also when they are Verma modules) this problem has been solved 
using various forms of the Bethe ansatz. At the most concrete level, the content of the present paper is to do the same for the cyclotomic Gaudin Hamiltonians of \eqref{Ham intro}. 

\bigskip

The Gaudin model has deep connections to (among others): 
the KZ equations and conformal field theory \cite{FFR,RV}; the geometric Langlands program \cite{Fre07}; and, via the Bethe ansatz, Schubert calculus \cite{MTV2}. Because of this central role in the theory of quantum integrable systems, generalizations of the Gaudin model are of great interest -- and in fact a wealth of different generalizations exist.
To fit the Hamiltonians of \eqref{Ham intro} into the picture, let us recall the possibilities.
First, one can consider, in place of the simple Lie algebra $\g$, a Lie algebra of affine type \cite{FF07} or a Lie superalgebra \cite{MVY}. 
Another possibility is to keep the simple Lie algebra $\g$ but modify the kinematics \cite{FFT,FFRb}.  
In a different direction, keeping both $\g$ and the kinematics fixed, a Gaudin model can be associated to any skew-symmetric solution $r(u,v)$ of the classical Yang-Baxter equation on $\g \otimes \g$ with spectral parameter \cite{Gbook}. Such solutions fall, by the Belavin-Drinfeld \cite{BelavinDrinfeld} classification, into three classes -- rational, trigonometric and elliptic -- and the Hamiltonians \eqref{Ghams} correspond to the rational solution $r(u,v) = I^a\otimes I_a/(u-v)$.
(By replacing the Riemann sphere $\C P^1$, in which the marked points $z_i$ lie, by a more general Riemann surface, one arrives at quantized Hitchen systems \cite{BeilinsonDrinfeld,EnriquezRubtsov}.)
From this point of view, the quadratic Hamiltonians \eqref{Ham intro} turn out to correspond to certain \emph{non}-skew-symmetric solutions to the classical Yang-Baxter equation, and viewed this way they were introduced by T.~Skrypnyk  in \cite{Skrypnyk1}. We comment more on this interpretation below. Another way to understand the origin of the Hamiltonians \eqref{Ham intro} is to recall that the usual Gaudin Hamiltonians \eqref{Ghams} are extracted from the poles of a Lax matrix which encodes the tensor product of evaluation representations of a half-loop algebra $\g \otimes \C[[t]]$. By replacing $\g \otimes \C[[t]]$ by a twisted half-loop algebra $(\g \otimes \C[[t]])^\sigma$, some special cases of the Hamiltonians \eqref{Ham intro} were introduced in \cite{CrampeYoung}.

\bigskip

In order to solve the spectral problem for the cyclotomic Gaudin Hamiltonians in \eqref{Ham intro}, we adopt the approach of B.~Feigin, E.~Frenkel and N.~Reshetikhin in \cite{FFR}. In fact much of the motivation for the present work can best be understood in terms of the details of this approach -- so let us review these details (in outline; cf. \S\ref{ZTGsec} below). 

A central role is played by the \emph{vacuum Verma module at critical level}, $\Vcrit$, over the untwisted affine Lie algebra $\gh$. Recall that $\gh$ is the extension of the loop algebra $\g \otimes \C((t))$ by a one-dimensional centre $\C K$ and that by definition 
$\Vcrit := U(\gh) \otimes_{U(\g \otimes \C[[t]]\oplus \C K)} \C \vac$ 
is the $\gh$-module induced from a \emph{vacuum} vector $\vac$ that is annihilated by all non-negative modes $A[k] := A\otimes t^k\in \g \otimes \C[[t]]$, $k\geq 0$, and on which the central charge $K$ takes the \emph{critical} value. (This critical value is $-h^\vee$ when we work with respect to the normalization of the invariant inner product on $\g$ in which long roots have square length 2, where $h^\vee$ is the dual Coxeter number of $\g$.) 
At this critical value of the central charge, the 
vector
\be S := \half  I^a[-1] I_a[-1] \vac, \label{S intro}\ee
where $I_a[-1] := I_a\otimes t^{-1}$, becomes \emph{singular}, meaning that like the vacuum it too is annihilated by all non-negative modes: $A[k] \on S = 0$ for all $A\in \g$ and all $k\geq 0$. 

To go from this singular vector $S\in \Vcrit$ to the quadratic Gaudin Hamiltonians \eqref{Ghams}, one needs the next key idea from \cite{FFR}, which is the interplay of ``local'' and ``global'' objects. We think of the tensor factors $V_i$ of the Gaudin spin chain $\bigotimes_{i=1}^N V_i$ as being local data, each $V_i$ assigned to its point $z_i\in \C$. The $V_i$'s are by definition $\g$-modules, but they become $\g \otimes \C[[t]]$-modules if we specify that strictly positive modes act as zero. More precisely, each $V_i$ becomes a module over the Lie algebra $\g \otimes \C[[t-z_i]]$ of $\g$-valued Taylor series in a local coordinate $t-z_i$ near the point $z_i$. Let now $u\in \C$ be another point, distinct from the $z_i$, and assign the $\gh$-module $\Vcrit$ to it: that is, regard $\gh$ as a central extension of the Lie algebra $\g \otimes \C((t-u))$ of $\g$-valued Laurent series in the local coordinate near $u$. 
These are the ``local'' objects. The ``global'' objects will be rational functions of the complex plane that vanish at infinity and that have poles at most at finitely many specified points. In fact, let $\g_u$ be the Lie algebra of $\g$-valued rational functions that vanish at infinity and have poles at most at the point $u$ to which $\Vcrit$ is assigned. A function $f(t)\in\g_u$ acts on $\Vcrit$, via its Laurent expansion about $u$. But it also acts on each $V_i$ via its Taylor expansion about $z_i$. Thus $f(t)$ acts on the tensor product $\bigotimes_{i=1}^N V_i\otimes \Vcrit$. The quotient by this action, $(\bigotimes_{i=1}^N V_i \otimes \Vcrit) \big/ \g_u$, is called the space of \emph{coinvariants} with respect to $\g_u$. Each equivalence class can be shown to contain a unique representative in $\bigotimes_{i=1}^N V_i\otimes \C \vac$, and in this way the space of coinvariants is identified as a vector space with   
$\bigotimes_{i=1}^N V_i\otimes \C\vac \cong_\C \bigotimes_{i=1}^N V_i$. 
For every vector $X\in \Vcrit$ we then have a linear map $X(u)\in \End(\bigotimes_{i=1}^N V_i)$ which sends $v\in \bigotimes_{i=1}^N V_i$ to the class of $v\otimes X$.  
This map $X(u)$ depends rationally on $u$, with potential poles at the points $z_i$. 

In particular, to the singular vector $S\in \Vcrit$ of \eqref{S intro} is associated a map $S(u)\in \End(\bigotimes_{i=1}^N V_i)$ depending rationally on $u$, with poles at the $z_i$. The residue of this map at the point $z_i$ is precisely the representative in $\End(\bigotimes_{i=1}^N V_i)$ of the Hamiltonian $\mathcal{H}_i^{\textrm{Gaudin}}$. 

Thus far the fact that $S\in \Vcrit$ is singular has not been used. But the real merit of the above construction is that, given any two singular vectors $Z_1,Z_2\in \Vcrit$, the linear operators $Z_1(u)$ and $Z_2(v)$ may be shown to commute (where we now introduce two copies of $\Vcrit$, assigned to distinct points $u$ and $v$ in the complex plane). The space of singular vectors of the vacuum Verma module at critical level is known to be very large \cite{FeiginFrenkel}.\footnote{In type A, an explicit formula for a generating set of singular vectors was given by A.~Chervov and A.~Molev in \cite{ChervovMolev}, based on earlier work by D.~Talalaev \cite{Talalaev}. See also \cite{Molev} for analogous formulae in types B, C and D.} The image of this space of singular vectors is then a large commutative subalgebra of $\End(\bigotimes_{i=1}^NV_i)$, the \emph{Gaudin algebra}\footnote{Here we follow \cite{Freview} in calling this commutative subalgebra a \emph{Gaudin algebra}. In \cite{MTV1} the same object is referred to as a \emph{Bethe algebra}.}
for the data $(\g; V_1,\dots,V_N; z_1,\dots,z_N)$. It is generated by the quadratic Gaudin Hamiltonians \eqref{Ghams} together, when $\rank(\g) \geq 2$, with a hierarchy of \emph{higher Gaudin Hamiltonians}. (In fact it forms a maximal commutative subalgebra of $U(\g)^{\otimes N}$  containing the quadratic Gaudin Hamiltonians \cite{Rybnikov}.)

\bigskip

One would now like to obtain the cyclotomic Hamiltonians \eqref{Ham intro} by suitably generalizing this procedure. Some or all of the above objects should be appropriately twisted by the automorphism $\sigma:\g\to\g$, and the question is, which ones? 

At first sight it is perhaps tempting to think that the affine algebra $\gh$ should be replaced by a twisted affine algebra $\gh^\sigma$, and the $\gh$-module $\Vcrit$ with a module over $\gh^\sigma$. 
This turns out to be the wrong approach, however. 
The cyclic group $\Z_T$ acts on $\g$ by powers of the automorphism $\sigma$ and on the complex plane by rotations about the origin
through
multiplication by powers of $\omega$. But $\gh$ and $\Vcrit$ are local data assigned to the point $u$ and, since the construction above relies on $u$ being suitably generic, we should suppose that $u$ is not the origin. So we do not naturally obtain the projector $\sum_{k\in \Z_T} \omega^{-k} \sigma^k: \g \otimes \C((t)) \to (\g \otimes \C((t)))^\sigma$ onto the twisted loop algebra, $(\g \otimes \C((t)))^\sigma:= \{ X(t) \in \g \otimes \C((t)): X(\omega t) =\sigma X(t)\}$.
What is more, it is a good idea to avoid twisting $\gh$ and $\Vcrit$ if possible, because the structure of these objects is so key in the construction of \cite{FFR}. 

In fact -- and this is the main theme of the present paper -- the appropriate objects to twist are the ``global'' ones, not the ``local'' ones. Thus, we replace $\g_u$ by the algebra of rational functions $f(t)$ that vanish at infinity, that have poles at most at $u$ \emph{and its image points} $\omega^ku$, \emph{and} that obey the equivariance condition
\be f(\omega t) = \sigma f(t).\ee 
The content of \S\ref{ZTGsec} below is to show that the construction of a large commutative subalgebra of $U(\g)^{\otimes N}$ goes through with such twist-equivariant global objects. We call the resulting algebra 
the \emph{cyclotomic Gaudin algebra}.
See Theorem \ref{Zthm}, which is the main result of the first half of the paper. 

This cyclotomic Gaudin algebra contains the quadratic Hamiltonians \eqref{Ham intro} and also (again, when $\rank(\g)\geq 2$) a hierarchy of \emph{higher cyclotomic Gaudin Hamiltonians}. Let us comment on the latter. The change to the allowed rational functions $f(t)$ is apparently minor, so one might suspect that the resulting Hamiltonians would all be correspondingly minor alterations of their untwisted counterparts, much as the first term on the right in \eqref{Ham intro} is nothing but the usual Hamiltonian \eqref{Ghams} suitably ``decorated'' with $\sigma$'s and $\omega$'s. But this is not at all the case. The ``leading terms'' are always of this form, but there is in general a long tail of intricate ``correction'' terms, which in a certain sense arise from ``self-interaction'' due to the twisting. See Remark \ref{sirem} below. 
(Let us note in passing that this feature is closely linked to the difference between the simple closed form that exists for the normal-ordered product of fields for modules over vertex algebras and the much more complicated \cite{Doyon} and/or implicit \cite{Li1,Li3} form of the normal-ordered product for fields in twisted- and quasi-modules over vertex algebras.)

\bigskip

Having defined the cyclotomic Gaudin algebra, the next and larger task is to address the spectral problem using a Bethe Ansatz. Sections \ref{sec:wc}--\ref{sec:Hdef} are devoted to this in the case where the representations $V_i$ are Verma modules with highest weights $\lambda_i\in \h^*$.
Let us begin by summarizing the results. We construct joint eigenvectors -- called \emph{Bethe vectors} --  of the cyclotomic Gaudin Hamiltonians. Each such vector is labelled by a collection of $m\in \Z_{\geq 0}$ pairs $(w_i,c(i))$, $1\leq i\leq m$, where $w_i\in \Cx$ is a \emph{Bethe root} and where $c(i)\in I$ is a node of the Dynkin diagram of $\g$ which one thinks of as labelling a lowering operation $F_{c(i)}$ in the direction of the simple root $\alpha_{c(i)}$. The explicit form of these Bethe vectors is 
\begin{align} \psi_\Gamma = 
 (-1)^m\!\!\!\!\!\! \sum_{\substack{\bm n\in P_{m,N}\\  (k_1,\dots,k_m) \in \Z_T^m}}
  \bigotimes_{i=1}^N \frac{ \check\sigma^{k_{n^i_1}}(F_{c(n^i_{1})})\check\sigma^{k_{n^i_2}}(F_{c(n^i_2)})\dots \check\sigma^{k_{n^i_{p_{i}-1}}}(F_{c(n^i_{p_{i}-1})}) \check\sigma^{k_{n^i_{p_i}}}(F_{c(n^i_{p_i})}) \mathsf{v}_{\lambda_i}}
   {\Big(\omega^{k_{n^i_1}} w_{n^i_1} - \omega^{k_{n^i_2}} w_{n^i_2}\Big)\dots
    \Big(\omega^{k_{n^i_{p_{i}-1}}}w_{n^i_{p_{i}-1}} - \omega^{k_{n^i_{p_i}}} w_{n^i_{p_i}}\Big)
        \Big(\omega^{k_{n^i_{p_i}}}w_{n^i_{p_i}} -   z_i\Big)  }.
\label{SVintro}\end{align}
where the sum $\bm n\in P_{m,N}$ is over ordered partitions of the labels $\{1,\dots,m\}$ into $N$ parts, and where $\check\sigma(X):=\omega\sigma(X)$. (For details see \S\ref{sec:bv}.) This formula is a natural cyclotomic analog of the \emph{Schechtmann-Varchenko formula}, \cite{SV,BabujianFlume}. 

The Bethe roots $w_j$ are required to obey a collection of \emph{cyclotomic Bethe equations}:
\be
0= \sum_{r=0}^{T-1} \sum_{i=1}^N\frac{\langle \alpha_{c(j)},\lsigma^r\lambda_i\rangle}{w_j-\omega^rz_i} - \sum_{r=0}^{T-1} \sum_{\substack{k=1\\k\neq j}}^m \frac{\langle \alpha_{c(j)},\lsigma^r\alpha_{c(k)}\rangle}{w_j-\omega^rw_k} +
\frac{1}{w_j}
\left(\sum_{r=1}^{T-1} \frac{\langle \alpha_{c(j)},\lsigma^r\alpha_{c(j)}\rangle}{\omega^r - 1} + \langle \alpha_{c(j)}, \lambda_0 \rangle \right)
\label{tbeintro}\ee
for each $j\in \{1,\dots,m\}$.
Here $\lsigma$ is the induced action of $\sigma$ on weight space, $\h^*$, given by $\lsigma(\lambda)(h) = \lambda(\sigma^{-1} h)$. The second main result of the paper, Theorem \ref{evalthm}, is that provided these cyclotomic Bethe equations are satisfied then the corresponding Bethe vector $\psi_\Gamma$ is a simultaneous eigenvector of the full hierarchy of cyclotomic Gaudin Hamiltonians (with eigenvalues as defined in \S\ref{sec:evecs}).

\bigskip

Let us discuss the form of the equations \eqref{tbeintro}. The first two terms  are the natural cyclotomic analogs of the corresponding terms in the Bethe equations for the usual Gaudin model (for which see \cite{FFR}). Then there are two terms in $1/w_j$. The first of these, $\sum_{r=1}^{T-1} \langle \alpha_{c(j)},\lsigma^r\alpha_{c(j)}\rangle/(\omega^rw_j - w_j)$, can be regarded as a ``self-interaction'' between the Bethe root $w_j$ and its twist-images $\omega^rw_j$.
However, the final term in \eqref{tbeintro}, namely $\langle \alpha_{c(j)}, \lambda_0 \rangle/w_j$, is a more subtle new feature of the cyclotomic case. The weight $\lambda_0$ is given, we find, by
\begin{equation*}
\lambda_0(h)  := \sum_{r=1}^{T-1} \frac {\tr_\n (\sigma^{-r} \ad_h)} {1 - \omega^r} =  \sum_{r=1}^{T-1} \frac{1}{1 - \omega^r} \sum_{\substack{\alpha\in \Delta^+\\\sigma^r(\alpha)=\alpha}} \left( \prod_{p=0}^{r-1} \tau_{\sigma^p(\alpha)}^{-1} \right) \alpha(h),
\end{equation*}
where $\g=\n^-\oplus \h \oplus \n$ is a Cartan decomposition of $\g$; see \S\ref{sec: origin}. This definition depends on the data $(\g,\sigma,T)$ but not, for example, on the marked points $z_i$ or the choice of $\g$-modules assigned to them.
From the present perspective, this term originates in the need to preserve a crucial property of the Wakimoto construction when the global objects (the rational functions) are twist-equivariant. Namely, we need that any linear functional invariant under twist-equivariant rational functions valued in a certain Heisenberg Lie algebra is also, automatically, invariant under twist-equivariant rational functions valued in $\g$. See \S\ref{sec:functoriality} below, and also a companion paper \cite{VY} in which we discuss cyclotomic coinvariants in the more general framework of vertex Lie algebras. In \cite{VY} we establish in particular a result (Theorem \ref{thm:coinv}, below) that will be needed here. 
The upshot is that it is necessary to assign, to the origin, a certain carefully chosen one-dimensional module over the twisted loop algebra $(\h \otimes \C((t)))^\sigma$, and this gives rise to the final term in \eqref{tbeintro}.

\bigskip

This paper is structured as follows. The construction of the cyclotomic Gaudin algebra by means of coinvariants is given in \S\ref{ZTGsec}. 

In \S\ref{sec:wc} we recall the \emph{Wakimoto construction} (or \emph{free-field realization}) for $\Vcrit$, which is a homomorphism of vertex algebras $\rho:\Vcrit\to \WW_0$ to a Heisenberg vertex algebra $\WW_0$, and check that this homomorphism is equivariant with respect to $\sigma$, for a natural definition of $\sigma$ on $\WW_0$.

In \S\ref{sec:Hdef} we recall the definition of Wakimoto modules and then go on to use them to construct the Bethe vectors and prove (Theorem \ref{evalthm}) that they are simultaneous eigenvectors of the hierarchy of cyclotomic Gaudin Hamiltonains provided the cyclotomic Bethe equations are satisfied. 

In \S\ref{sec:examples} we discuss some implications and special cases of Theorem \ref{evalthm}. In particular we extract the explicit form of the eigenvalues of the quadratic cyclotomic Gaudin Hamiltonians \eqref{Ham intro}. 
In the special case when $\g$ is of type A, B or C, $\sigma$ is an inner automorphism of order $2$, and  the $V_i$ are vector representations, these eigenvalues were obtained by T. Skrypnyk \cite{Skrypnyk2}, who also found Bethe equations in agreement with \eqref{tbeintro}. 

Appendix \ref{sec:tert} contains the statement and proof of a $\Gamma$-equivariant version of the Strong Residue Theorem. In Appendix \ref{app:SV} we carry out diagrammatic calculations similar in spirit to those of \cite{SV} in order to establish the cyclotomic analog of the Schechtmann-Varchenko formula, given above, for the Bethe vector. 

\bigskip

We close this introduction by noting some open questions.
\medskip

The first concerns symmetries of the cyclotomic Gaudin model and the issue of \emph{completeness} of the Bethe ansatz. The Gaudin algebra commutes with the copy $\Delta^N\g$ of $\g$ in $U(\g)^{\otimes N}$ and, when the $V_i$ are Verma modules, Bethe vectors $\psi\in \bigotimes_{i=1}^N V_i$ are singular (where singular now means singular for  $\g$, i.e. $\n.\psi=0$). The Bethe ansatz is said to be complete, for a given collection of marked points $z_i$ and representations $V_i$, if the Bethe vectors form a basis of the space of singular vectors of $\bigotimes_{i=1}^N V_i$. Completeness is known to hold in many cases, but not in all \cite{MV3}. Now, the cyclotomic Gaudin algebra commutes only with stable subalgebra $\g^\sigma := \{X\in \g: \sigma X= X\}$. We expect, but have not proved here, that the cyclotomic Bethe vectors are singular for $\g^\sigma$. Assuming this is so, the interesting question is then whether they form a basis of such singular vectors in $\bigotimes_{i=1}^NV_i$. If they do then the system of cyclotomic Bethe equations would then provide a new way to study the decomposition of tensor products of $\g$-modules into their irreducible components with respect to $\g^\sigma$. In particular in the one-point case $N=1$, one has a new approach to the branching rules from  $\g$ to $\g^{\sigma}$. Moreover, usually the bases provided by Bethe vectors have additional meaning -- they are for example related in certain limits to crystal bases \cite{VarchenkoCrystal} -- and there should be analogous statements in the cyclotomic case.

\medskip

As previously noted, one can regard the Gaudin Hamiltonians \eqref{Ham intro} as being associated to certain non-skew-symmetric solutions of the classical Yang-Baxter equation. To see why that is, recall first the construction of the generalised Gaudin Hamiltonians associated to skew-symmetric solutions of the classical Yang-Baxter equation. Let $r(u, v)$ be a $(\g \otimes \g)$-valued function depending on two complex parameters $u, v \in \C$ such that $r_{12}(u, v) = - r_{21}(v, u)$ and satisfying the classical Yang-Baxter equation
\begin{equation} \label{CYBE intro}
\big[ r_{12}(z_1, z_2), r_{13}(z_1, z_3) \big] + \big[ r_{12}(z_1, z_2), r_{23}(z_2, z_3) \big] + \big[ r_{32}(z_3, z_2), r_{13}(z_1, z_3) \big] = 0.
\end{equation}
One may associate to this classical $r$-matrix the following family of $N$ elements in $U(\g)^{\otimes N}$:
\begin{equation*}
\mathcal{H}_i^r = \sum_{\substack{j=1\\ j \neq i}}^N r_{ji}(z_j, z_i).
\end{equation*}
In particular, the fact that these Hamiltonians are mutually commuting is a direct consequence of the classical Yang-Baxter equation \eqref{CYBE intro}.
More generally, Gaudin models can also be associated to non-skew-symmetric $r$-matrices; that is, to $(\g \otimes \g)$-valued functions $r(u,v)$ satisfying the classical Yang-Baxter equation but not the skew-symmetry condition $r_{12}(u, v) = - r_{21}(v, u)$ \cite{Skrypnyk1}. 
We expect the cyclotomic Gaudin Hamiltonians \eqref{Ham intro} to be associated to the following non-skew-symmetric $r$-matrix 
\begin{equation*}
r(u,v) = \sum_{p=0}^{T-1} \frac{\sigma^p I^a \otimes I_a}{\omega^{-p} u - v} = \frac{I^a \otimes I_a}{u - v} + r^0(u, v), \qquad \text{where} \quad r^0(u, v) = \sum_{p=1}^{T-1} \frac{\sigma^p I^a \otimes I_a}{\omega^{-p} u - v}.
\end{equation*}
The appearance of this non-skew-symmetric $r$-matrix in our construction stems from the particular choice we made of complementary subalgebra to $\bigoplus_{i=1}^N \g \otimes \C[[t - z_i]]$. Indeed, the above $r$-matrix corresponds to this decomposition of $\bigoplus_{i=1}^N \g \otimes \C((t - z_i))$ under the Adler-Kostant-Symes scheme \cite{AKS}.

\medskip 

The quadratic cyclotomic Gaudin Hamiltonians are closely related to a system of cyclotomic KZ equations. For inner automorphisms $\sigma$ at least, such systems have been introduced and studied in \cite{Brochier}. It would be very interesting to see whether the cyclotomic Schechtman-Varchenko formula \eqref{SVintro} can be used in to construction solutions to these equations, generalizing the usual case.

\medskip

Finally, while we work in the present paper with the (relatively) concrete formulation given in \cite{FFR}, the solution to the Gaudin model by Bethe ansatz has since been recast in the geometrical language of opers and Miura opers -- for a review see \cite{Freview} -- and it would be interesting to understand the cyclotomic Bethe ansatz above in this language.

\begin{ack}
We would like to thank P. Etingof and A. Varchenko for helpful suggestions and comments. 
We are grateful to E. Mukhin for interesting discussions. 
\end{ack}

\section{The cyclotomic Gaudin model}\label{ZTGsec}

Fix a $T\in \Z_{\geq 1}$ and pick a primitive $T$th root of unity $\omega\in \Cx$. Let $$\Gamma := \{1,\omega,\omega^2,\dots,\omega^{T-1}\},$$ which is a copy of the cyclic group $\Z_T\cong \Z/T\Z$ of order $T$. $\Gamma$ acts on $\C$ by multiplication, $(\omega,z) \mapsto \omega z$. 
We choose a collection $\bm z= \{z_1,\dots,z_N\}$ of $N\in \Z_{\geq 1}$ non-zero points in the complex plane whose $\Gamma$-orbits are pairwise disjoint:
\be \Gamma z_i\cap  \Gamma z_j = \emptyset \quad\text{for all $1\leq i\neq j\leq N$}. \ee
Note that the condition $z_i\neq 0$ is equivalent to the demand that $\Gamma$ act freely on $\Gamma z_i$.

Let $\g$ be a simple Lie algebra over $\C$, and $\langle\cdot,\cdot\rangle$ the non-degenerate invariant inner product on $\g$ with respect to which the square of the length of the long roots is 2. 

In this section we construct the Hamiltonians of \emph{cyclotomic Gaudin models}. 
These Hamiltonians generate a large commutative subalgebra $\Gaud^\Gamma_{\bm z}$ of $U(\g)^{\otimes N}$. In the special case $\Gamma=\{1\}$ we recover the Gaudin algebra $\Gaud_{\bm z}$ as in e.g. \cite{Freview}.

\subsection{Notation: Formal variables and series expansions}\label{defssec}
We work over $\C$. 
Let $t$ be a formal variable. We write $\C[t]$ for the ring of polynomials in $t$, $\C[[t]]$ for the ring of formal power series, and $\C((t))$ for the field of formal Laurent series. 
Let $\C(t)$ denote the field of fractions of $\C[t]$,
i.e. the field of rational functions of $t$ with complex coefficients.

Given a finite set $\bm x=\{x_1,\dots,x_p\}$ of $p\in \Z_{\geq 1}$ pairwise distinct points in the complex plane, we write $\C_{\bm x}(t)\subset \C(t)$ for the localization of $\C[t]$ by the multiplicative subset generated by $t-x_i$, $1\leq i\leq p$. The elements of $\C_{\bm x}(t)$ are rational functions of $t$ that have poles at most at the points $x_i$. They form, in particular, a $\C$-algebra. Let $\C^\8_{\bm x}(t)$ be the subalgebra consisting of rational functions of $t$ that, in addition, vanish at $\8$. That is,
\be \C^\8_{\bm x}(t) = \left\{\frac{p(t)}{q(t)} : p(t), q(t)\in \C[t],\, \deg p < \deg q,\, q(z)\neq 0 \text{ for all } z\in \C\setminus \bm x  \right\}.\ee

For any complex number $z$, define 
\be \iota_{t-z} : \C(t) \to \C((t-z)) \ee
to be the map that returns the formal Laurent expansion about the point $z$. Given $f(t)\in \C(t)$, $\iota_{t-z}f(t)$ is by definition computed by replacing every occurrence of $t$ by $(t-z)+z$ and then expanding in powers of $(t-z)$, which one is to regard as a new formal variable, the ``formal local coordinate at $t=z$''. (If $z$ is not a pole of $f(t)$ then the result is actually in $\C[[t-z]]$.) 

The \emph{residue map} $\res_{t-z} : \C((t-z)) \to \C$ is defined by
\be \res_{t-z} \sum_{k=-K}^\8 a_k (t-z)^k = a_{-1}.\ee
(For brevity we shall sometimes write the map $\res_{t-z}\circ\iota_{t-z}$ from $\C(t)\to \C$ just as $\res_{t-z}$.)

We write $f'$ for the derivative of an element $f$ of $\C((t-z))$, or $\C(t)$, with respect to its argument.

\subsection{The algebra $\gh_N$.}
Consider any one of the points $z_i$, $i=1,\dots,N$. 
Let $\gh_{(i)}$ denote the copy of the affine algebra $\gh$ obtained by taking the extension of $\g\otimes\C((t-z_i))$, by a one-dimensional centre $\C K_i$, defined by the co-cycle
\be \Omega_i(f_{z_i},g_{z_i}) = \res_{t-z_i} \langle f_{z_i}, g'_{z_i} \rangle K_i,\qquad f_{z_i},g_{z_i}\in \g\otimes\C((t-z_i)).\ee
Concretely, given $a\in \g$ and $n\in \Z$, we shall write $a \otimes (t-z_i)^n\in \g\otimes\C((t-z_i))$ as $a[n]^{(i)}$, or just $a[n]$ when there is no risk of ambiguity. Then the commutation relations of $\gh_{(i)}$ are
\begin{equation} \label{com rel gh}
\left[ a[n], b[m]\right] = [a,b][n+m] + n \langle a,b\rangle \delta_{n+m,0} K_i, \qquad [K_i,\gh_{(i)}]=0.
\end{equation}

Next, let $\gh_N$ denote the extension of the direct sum $\bigoplus_{i=1}^N \g\otimes\C((t-z_i))$, by a one-dimensional centre $\C K$, defined by the cocycle 
\begin{equation} \label{cocycle}
\Omega(f, g) :=  \sum_{i = 1}^N \res_{t-z_i} \langle f_{z_i},  g'_{z_i} \rangle\, K.
\end{equation}
where $f = (f_{z_i})_{1\leq i\leq N}$ and $g = (g_{z_i})_{1\leq i\leq N}$ are in $\bigoplus_{i=1}^N \g\otimes\C((t-z_i))$. 
In other words, $\gh_N$ is the quotient of the direct sum $\bigoplus_{i=1}^N \gh_{(i)}$ by the ideal spanned by $K_i - K_j$, $i= 1, \ldots, N$, so that all the central generators $K_i$ are identified to a single one
which we then call $K$. 
Thus, the commutation relations of $\gh_N$ are, explicitly,
\begin{equation} \label{Kac-Moody alg}
\left[ a[n]^{(i)}, b[m]^{(j)}\right] = \delta_{ij} \left([a,b][n+m]^{(i)} + n \langle a,b\rangle \delta_{n+m,0} K\right), \qquad [K,\gh_N]=0.
\end{equation}

\subsection{Induced $\gh_N$-modules}\label{sec: indg}
Let now $\M_{(i)}$ be a $\g$-module, for each $i=1,\dots, N$, and define 
\be \M_N :=  \M_{(1)} \otimes\dots\otimes \M_{(N)}.\ee 
We think of $\M_{(i)}$ as being \emph{assigned} to the point $z_i$, and we turn $\M_{(i)}$ into a module $\M^k_{(i)}$ over $\g\otimes\C[[t-z_i]] \oplus \C K_i$ by declaring that $\g\otimes(t-z_i)\C[[t-z_i]]$ acts trivially and $K_i$ acts by multiplication by $k\in \C$. 
Then we define $\MM^k_{(i)}$, the induced representation of $\gh_{(i)}$ of level $k$, as follows:
\be
\MM_{(i)}^k = \Ind_{\g\otimes\C[[t-z_i]] \oplus \C K_i}^{\gh_{(i)}} \M_{(i)}^k :=  U(\gh_{(i)})\otimes_{U(\g\otimes\C[[t-z_i]] \oplus \C K_i)} \M_{(i)}^k.
\ee
Similarly, the tensor product $\M^k_{N} := \M^k_{(1)} \otimes\dots\otimes \M^k_{(N)}$ is a module over the Lie subalgebra
\be
\gh_N^+ := \bigoplus_{i=1}^N \g\otimes\C[[t-z_i]] \oplus \C K\label{gnpdef}
\ee
of $\gh_N$, and we have the induced $\gh_N$-module
\be
\MM^k_N := \Ind_{\gh_N^+}^{\gh_N} \M^k_{N} := U(\gh_N) \otimes_{U(\gh_N^+)} \M^k_N .\label{vvdef}
\ee
There is an isomorphism of $\gh_N$-modules
\be
\MM^k_N \cong \MM^k_{(1)} \otimes \dots \otimes \MM^k_{(N)}. 
\ee

\subsection{Complements of $\gh_N^+$ in $\gh_N$} \label{sec: complements}
The following lemma relates the induced module $\MM^k_N$ back to $\M^k_N$.
\begin{lem}\label{complem}
Suppose $\p$ is any Lie algebra for which there is an embedding $\p\hookrightarrow \gh_N$ of Lie algebras such that $\gh_N\cong_\C\gh_N^+\oplus \p$ as vector spaces. Then there is an isomorphism of vector spaces
\be \MM^k_N \big/ \p \cong_\C \M_N, \ee
where
\begin{equation} \label{coninv p}
\MM^k_N \big/ \p := \MM^k_N \Big/ \left(\p. \MM^k_N\right)
\end{equation}
is called the space of \emph{coinvariants with respect to $\p$}. 
\end{lem}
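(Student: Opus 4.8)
The plan is to deduce everything from the Poincar\'e--Birkhoff--Witt theorem. The first step is purely Lie-theoretic: since $\gh_N = \gh_N^+ \oplus \p$ as vector spaces and both $\gh_N^+$ and $\p$ are Lie subalgebras, we may choose an ordered basis of $\gh_N$ obtained by listing an ordered basis of $\p$ first and an ordered basis of $\gh_N^+$ second. Applying PBW to this basis shows that the multiplication map
\[
U(\p)\otimes_\C U(\gh_N^+) \longrightarrow U(\gh_N), \qquad u\otimes v\mapsto uv,
\]
is an isomorphism of vector spaces; and since right multiplication by $U(\gh_N^+)$ on the left-hand side only affects the second factor, it is in fact an isomorphism of right $U(\gh_N^+)$-modules. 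Note that it is irrelevant here that neither $\p$ nor $\gh_N^+$ need be an ideal of $\gh_N$.

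The second step applies this to the induced module. By definition $\MM^k_N = U(\gh_N)\otimes_{U(\gh_N^+)} \M^k_N$, so the preceding isomorphism gives
\[
\MM^k_N \;\cong_\C\; U(\p)\otimes_\C U(\gh_N^+)\otimes_{U(\gh_N^+)} \M^k_N \;\cong_\C\; U(\p)\otimes_\C \M_N,
\]
recalling that $\M^k_N = \M_N$ as vector spaces. Tracing the $\gh_N$-action through, the restriction to $\p\subset\gh_N$ becomes, on the right-hand side, left multiplication on the $U(\p)$-factor (because for $x\in\p$ and $u\in U(\p)$ one has $xu\in U(\p)$). Consequently the submodule $\p.\MM^k_N$ corresponds under this isomorphism to $\bigl(\p\,U(\p)\bigr)\otimes_\C \M_N$.

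The final step identifies the quotient. The subspace $\p\,U(\p)$ is exactly the augmentation ideal $U(\p)_+ := \ker\bigl(\epsilon\colon U(\p)\to\C\bigr)$: on one hand $\epsilon(xu)=\epsilon(x)\epsilon(u)=0$ for $x\in\p$, and on the other every PBW monomial $y_1\cdots y_n$ with $n\geq 1$ equals $y_1\cdot(y_2\cdots y_n)\in\p\,U(\p)$. Hence $U(\p)/\p\,U(\p)\cong\C$ via $\epsilon$, and therefore
\[
\MM^k_N\big/\p \;=\; \MM^k_N\big/\bigl(\p.\MM^k_N\bigr) \;\cong_\C\; \bigl(U(\p)/\p\,U(\p)\bigr)\otimes_\C \M_N \;\cong_\C\; \M_N,
\]
which is the assertion. (At the level of representatives this says precisely that the composite $\M_N \hookrightarrow \MM^k_N \twoheadrightarrow \MM^k_N/\p$, $m\mapsto [\,1\otimes m\,]$, is the inverse isomorphism, so every coinvariant class has a unique representative in $1\otimes\M_N\subset\MM^k_N$.) The only point that needs genuine care is the first step --- verifying that a mere vector-space complement of the subalgebra $\gh_N^+$, with no ideal hypothesis on $\p$, still yields the clean factorization of $U(\gh_N)$ as a free right $U(\gh_N^+)$-module on $U(\p)$ --- after which the remainder is bookkeeping.
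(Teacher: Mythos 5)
Your proof is correct and follows essentially the same route as the paper: both rest on the PBW-type factorization $U(\gh_N)\cong_\C U(\p)\otimes_\C U(\gh_N^+)$ (the paper cites Dixmier, Proposition 2.2.9), which exhibits $\MM^k_N$ as the free $\p$-module $U(\p)\otimes_\C \M^k_N$, whence the coinvariants are $\M_N$. Your version merely spells out the augmentation-ideal bookkeeping that the paper leaves implicit.
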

\begin{proof}
By virtue of the decomposition $\gh_N\cong_\C \gh_N^+ \oplus \p$, we have the isomorphism of vector spaces $U(\gh_N) \cong_\C U(\p) \otimes_{\C} U(\gh^+_N)$ (see for instance \cite[Proposition 2.2.9]{Dixmier}) so that, as a $\p$-module, $\MM^k_N$ is isomorphic to the free module $U(\p) \otimes_{\C} \M^k_N$ generated by $\M^k_N$. Hence  $\MM^k_N \big/ \p \cong_\C \M^k_N \cong_\C \M_N$.
\end{proof}
It is clear that one possible choice of $\p$ is the Lie subalgebra $\gh_N^-:=\bigoplus_{i=1}^N \g\otimes (t-z_i)^{-1}\C[(t-z_i)^{-1}]$.
However, it turns out to be more interesting to consider, instead, choices of $\p$ that are ``global''. To indicate roughly what is meant by ``global'', let us first recall the strong residue theorem in the present language (cf. \cite[\S9.2.9]{FB}).

\subsection{Local and global data, and the strong residue theorem}
A rational function in $\C^\8_{\bm z}(t)$, \S\ref{defssec}, is to be thought of as a ``global'' object, while an element of $\C((t-z_i))$ is  ``local'' data associated to the point $z_i$. There is a map
\be \iota: \C^\8_{\bm z}(t) \longhookrightarrow \bigoplus_{i=1}^N \C((t-z_i));\quad
    f(t) \longmapsto (\iota_{t-z_1}f(t),\dots,\iota_{t-z_N}f(t)) \ee
that associates to the global object $f(t)$ a tuple of local data: its Laurent-expansions. It is clear that $\ker \iota = \{0\}\subset \C^\8_{\bm z}(t)$, so this map is an injection. 
One can ask which tuples of local data can be ``globalized'', i.e. which ones lie in the image in the image of $\iota$. The strong residue theorem answers this question.
\begin{lem}[Strong residue theorem]\label{srt} An element $(f_1,\dots,f_N)\in \bigoplus_{i=1}^N \C((t-z_i))$ is in $\iota(\C^\8_{\bm z}(t))$ if and only if $$0=\sum_{i=1}^N \res_{t-z_i} f_i\, \iota_{t-z_i}(g)$$
for every $g\in \C^\8_{\bm z}(t)$.
\end{lem}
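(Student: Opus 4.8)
The plan is to establish the Strong Residue Theorem in two directions, the forward implication being essentially a restatement of the residue theorem from complex analysis and the converse being the substantive content. First I would dispose of the ``only if'' direction: if $(f_1,\dots,f_N)=\iota(f)$ for some $f\in \C^\8_{\bm z}(t)$, then for any $g\in \C^\8_{\bm z}(t)$ the product $fg$ is again a rational function vanishing at $\8$ (indeed of order at least $2$ there) whose only finite poles lie in $\bm z$. Hence the sum of its residues over the Riemann sphere is zero, and since there is no residue at $\8$ we get $0=\sum_{i=1}^N \res_{t-z_i}(fg) = \sum_{i=1}^N \res_{t-z_i} f_i\,\iota_{t-z_i}(g)$, using that $\iota_{t-z_i}$ is a ring homomorphism.

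For the converse I would argue by a dimension count, exhibiting an explicit nondegenerate pairing. Fix a large integer $L$ and truncate: let $V_L \subset \bigoplus_{i=1}^N \C((t-z_i))$ be the finite-dimensional space of tuples $(f_1,\dots,f_N)$ with each $f_i \in (t-z_i)^{-L}\C[[t-z_i]] \big/ (t-z_i)\C[[t-z_i]]$, i.e. each $f_i$ having principal part of order at most $L$; this has dimension $NL$. The image $\iota(\C^\8_{\bm z}(t))$, intersected with this truncation, consists of partial-fraction expansions $\sum_{i=1}^N \sum_{k=1}^{L} c_{i,k}(t-z_i)^{-k}$, which by the partial-fractions theorem is exactly $NL$-dimensional and maps isomorphically onto $V_L$ modulo the ``holomorphic tails'' $\bigoplus_i (t-z_i)\C[[t-z_i]]$. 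The key step is then to show that the bilinear pairing $\langle (f_i), (g_i)\rangle := \sum_{i=1}^N \res_{t-z_i} f_i\,\iota_{t-z_i}(g_i)$, restricted to $V_L \times \iota(\C^\8_{\bm z}(t))_{\leq L}$, is nondegenerate in the second variable: given a nonzero global $g$ with principal parts of bounded order, one can choose local data $(f_i)$ (for instance a single Laurent monomial concentrated near one $z_i$) pairing nontrivially with it. Since $\dim V_L = \dim \iota(\C^\8_{\bm z}(t))_{\leq L}$, nondegeneracy forces the orthogonal complement of $\iota(\C^\8_{\bm z}(t))_{\leq L}$ inside $V_L$ to have dimension exactly $NL - NL = 0$...

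—here care is needed, since the condition in the theorem ranges over \emph{all} $g\in \C^\8_{\bm z}(t)$, not just those of bounded pole order, so the orthogonal complement of the full image could a priori be smaller. The clean way around this is to observe that a tuple $(f_1,\dots,f_N)$ with each $f_i\in \C((t-z_i))$ automatically has bounded pole order, say all poles of order $< L$; then the condition ``$0=\sum_i \res_{t-z_i} f_i\,\iota_{t-z_i}(g)$ for all $g$'' is implied by (indeed equivalent to) its restriction to $g$ with $\iota_{t-z_i}(g)$ of order $\geq -(L-1)$ at each $z_i$ plus a vanishing-at-infinity bookkeeping, because pairing a Laurent series of bounded order against higher-order terms of $g$ contributes nothing new once one also imposes the residue condition coming from the pole at $\8$. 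So I would phrase the converse as: let $(f_i)$ satisfy the orthogonality condition; subtract from it the (unique) element $\iota(f)$ of the image having the same principal parts at every $z_i$ — this exists by partial fractions; the difference $(h_i) := (f_i) - \iota(f)$ then lies in $\bigoplus_i (t-z_i)\C[[t-z_i]]$ and still satisfies the orthogonality condition (by the forward direction applied to $\iota(f)$); one shows such an $(h_i)$ must be zero by testing against $g = (t-z_i)^{-k-1}\cdot(\text{polynomial clearing the other poles})$ for $k\geq 0$, which extracts the coefficient of $(t-z_i)^k$ in $h_i$.

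The main obstacle, as indicated, is the interchange of quantifiers between ``all $g$'' and ``all $g$ of bounded pole order'' — one must be careful that no information is lost, and this is exactly where the hypothesis that each $f_i$ is a genuine \emph{Laurent} series (finite principal part) is used. Once that reduction is made, everything reduces to the classical partial-fraction decomposition and the fact that $\res_{t-z_i}(t-z_i)^{-k-1}\cdot(t-z_i)^{k} = 1$ while all other monomial pairings vanish, so the remaining computations are routine. I would organize the writeup so that the forward direction and the partial-fractions isomorphism are stated first, and the quantifier reduction is isolated as the one genuinely careful point.
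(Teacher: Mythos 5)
Your proposal is correct and, after the detours, lands on essentially the paper's own proof: the ``only if'' direction by the sum-of-residues/vanishing-at-infinity argument for $fg$ with $f,g\in\C^\8_{\bm z}(t)$, and the converse by splitting off the principal parts into a global partial-fraction element $\iota(f)$ and killing the Taylor remainders $f_i-\iota_{t-z_i}f$ by testing against $g=(t-z_i)^{-k-1}$. Two small remarks: the difference lies in $\bigoplus_i\C[[t-z_i]]$ rather than $\bigoplus_i(t-z_i)\C[[t-z_i]]$ (constant terms need not cancel), which is harmless since you test all $k\geq 0$; and the dimension-count truncation and the quantifier worry are unnecessary, since for the ``if'' direction the hypothesis over all $g$ only needs to be used for the special family $g=(t-z_i)^{-k-1}$.
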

\begin{proof} For the ``only if'' direction, we must check that  
\be\label{ii} 0=\sum_{i=1}^N \res_{t-z_i}\iota_{t-z_i} f(t) g(t)
\ee 
for all $f(t),g(t)\in \C^\8_{\bm z}(t)$. To see this, consider writing $f(t)g(t)$ in its partial fraction decomposition and then taking the large-$t$ expansion. The coefficient of $t^{-1}$ is the sum in \eqref{ii}, and it must vanish because $f,g\in \C^\8_{\bm z}(t)$ implies $f,g\sim \mc O(1/t)$  and hence $fg \sim \mc O(1/t^2)$ for large $t$.\footnote{We choose to work with a formal variable $t$. To connect the weak (i.e. ``only if'') direction with the usual complex-analytic residue theorem, note that if $f(t),g(t)\in \C_{\bm z}^\8(t)$ are viewed as meromorphic functions of a complex variable $t$ then $f(t)g(t)dt$ is a one-form on $\C P^1$ with no pole at $\8$ (since $f(t)g(t)$ has a double zero there). So $0=\sum_{i=1}^N \res_{t-z_i} \iota_{t-z_i}f(t)g(t)$ is indeed the statement that the sum of all residues of this meromorphic one-form, $f(t)g(t)dt$, on $\C P^1$ is zero. 
} 
 
For the ``if'' direction, first observe that 
\be\bigoplus_{i=1}^N \C((t-z_i))\cong_\C \iota(\C^\8_{\bm z}(t)) \oplus \bigoplus_{i=1}^N \C[[t-z_i]].\label{gl}\ee
Indeed, let $f_i^-\in (t-z_i)^{-1} \C[(t-z_i)^{-1}]$ denote the pole part of $f_i\in \C((t-z_i))$; then $(f_1,\dots,f_N)$ splits uniquely as the direct sum of the function $f(t) := \sum_{i=1}^N f_{i}^-(t-z_i)\in \C^\8_{\bm z}(t)$ and the tuple $(f_1 - \iota_{t-z_1}f,\dots,f_N-\iota_{t-z_N}f) \in \bigoplus_{i=1}^N\C[[t - z_i]]$. Now from \eqref{ii}  we have that
\be \sum_{i=1}^N \res_{t-z_i} f_i  \iota_{t-z_i} (g) =\sum_{i=1}^N \res_{t-z_i} \left(f_i - \iota_{t-z_i}f\right) \iota_{t-z_i} g.\ee
Since $f_i - \iota_{t-z_i}f \in \C[[t - z_i]]$ one sees, by considering $g = 1/(t-z_i)^k\in \C^\8_{\bm z}(t)$ for every $1\leq i\leq N$ and every $k\in \mathbb{Z}_{>0}$, that this vanishes only if $f_i -\iota_{t-z_i}f=0$ for each $i$, as required.
\end{proof}
Note that \eqref{gl} says that any tuple of Laurent series can be split uniquely into a ``global'' part, living in $\C^\8_{\bm z}(t)$, and a residual tuple of Taylor series.

Now it follows from Lemma \ref{srt}  that the cocycle \eqref{cocycle} vanishes on restriction to the image of the embedding $\id\otimes \iota:\g\otimes \C^\8_{\bm z}(t)\hookrightarrow \bigoplus_{i=1}^N \g\otimes\C((t-z_i))$.
Consequently $\id\otimes \iota$ lifts to an embedding
\be\nn \g\otimes \C^\8_{\bm z}(t)\hookrightarrow \gh_N,\ee
and moreover, given \eqref{gl}, the image of this map is a complementary subspace to $\gh_N^+$ in $\gh_N$. This choice, $\p=\g\otimes \C^\8_{\bm z}(t)$, was the choice of complementary subspace $\p$ made in \cite{FFR}. 

In the present paper, we again choose a complementary subspace consisting of ``global'' objects; but in place of  $\g\otimes \C^\8_{\bm z}(t)$, the global objects we use will be $\Gamma$-equivariant, as follows.

\subsection{$\Gamma$-equivariant global data}

Let now $\sigma:\g\to \g$ be an automorphism whose order divides $T$. 
Define 
\be \g_{\bm z}^{\Gamma,k} :=   \{ f\in \g\otimes \C^\8_{\Gamma\bm z}(t): \sigma f(t) = \omega^{-k} f(\omega t) \} \ee 
and in particular
\be \g_{\bm z}^{\Gamma,0} = \gs_{\bm z} :=   \left(\g\otimes \C^\8_{\Gamma\bm z}(t)\right)^\Gamma.\label{gsdef}\ee 
That is, $\gs_{\bm z}$ is the Lie algebra of $\g$-valued rational functions $f$ in the formal variable $t$ that vanish at infinity, that have poles at most at the points $\{ \omega^k z_i: 1\leq k\leq T,\,1\leq i \leq N\}$, and that obey the condition $\sigma f(t) = f(\omega t)$. 

There is an injection
\be \iota: \g_{\bm z}^{\Gamma,k}  \longhookrightarrow \bigoplus_{i=1}^N \g\otimes \C((t-z_i));\quad
    f(t) \longmapsto (\iota_{t-z_1}f(t),\dots,\iota_{t-z_N}f(t)).\label{tin}\ee
Note that the ``local'' data is the same as before: we still merely take the Laurent expansions at the marked points $z_i$. We then have the following analog of Lemma \ref{srt}. 

\begin{lem}\label{tert}
An element $(f_1,\dots,f_N)\in \bigoplus_{i=1}^N \g\otimes \C((t-z_i))$ is in $\iota(\g_{\bm z}^{\Gamma,k})$ if and only if $$0=\sum_{i=1}^N  \res_{t-z_i} \langle f_i, \, \iota_{t-z_i}(g)\rangle$$
for every $g\in \g_{\bm z}^{\Gamma,-k-1}$.
\end{lem}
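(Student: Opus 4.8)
The plan is to adapt the proof of the ordinary Strong Residue Theorem (Lemma \ref{srt}) by introducing the $\Gamma$-action, or — more efficiently — to deduce Lemma \ref{tert} directly from Lemma \ref{srt} together with a projection/averaging argument. Let me sketch the more direct route first.

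\textbf{Setting up the pairing.} First I would observe that, since $\sigma$ is an automorphism, the inner product $\langle\cdot,\cdot\rangle$ is $\sigma$-invariant (up to a scalar, which by simplicity of $\g$ and uniqueness of the invariant form can be normalized away; in fact for the normalization in which long roots have square length $2$ one has exactly $\langle\sigma x,\sigma y\rangle=\langle x,y\rangle$). This is the key compatibility that makes the $\g$-valued residue pairing interact well with the $\Gamma$-equivariance conditions. Concretely, for $f\in\g\otimes\C^\8_{\Gamma\bm z}(t)$ and $g\in\g\otimes\C^\8_{\Gamma\bm z}(t)$, the scalar rational function $\langle f(t),g(t)\rangle\in\C^\8_{\Gamma\bm z}(t)$ (note the double zero at $\infty$), and if $\sigma f(t)=\omega^{-k}f(\omega t)$ and $\sigma g(t)=\omega^{k+1-1}g(\omega t)$... — more precisely, if $f\in\g_{\bm z}^{\Gamma,k}$ and $g\in\g_{\bm z}^{\Gamma,-k-1}$, then $\langle f(\omega t),g(\omega t)\rangle=\langle\sigma f(t),\sigma g(t)\rangle\cdot\omega^{k}\cdot\omega^{-k-1}$-type bookkeeping shows that the scalar function $h(t):=\langle f(t),g(t)\rangle$ satisfies $h(\omega t)=\omega^{-1}h(t)$, i.e. $t\,h(t)$ is $\Gamma$-invariant. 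I would check this identity carefully, as it is the crux.

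\textbf{The ``only if'' direction.} This is the easy half. Given $f\in\g_{\bm z}^{\Gamma,k}$ with Laurent expansions $f_i=\iota_{t-z_i}f$, and any $g\in\g_{\bm z}^{\Gamma,-k-1}$, the sum $\sum_i\res_{t-z_i}\langle f_i,\iota_{t-z_i}g\rangle$ is, by the scalar-valued weak residue theorem (the ``only if'' direction of Lemma \ref{srt} applied to the function $\langle f(t),g(t)\rangle$, which indeed lies in $\C^\8_{\Gamma\bm z}(t)$), equal to $-\sum_{k=1}^{T-1}\res_{t-\omega^kz_j}(\cdots)$ summed over the \emph{other} points in the $\Gamma$-orbits — but those are not among $z_1,\dots,z_N$. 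So I cannot just quote Lemma \ref{srt} verbatim: the marked points in Lemma \ref{tert} are only the $z_i$, not the whole orbit $\Gamma\bm z$. The fix is to use the $\Gamma$-equivariance to relate the residue at $\omega^k z_i$ to the residue at $z_i$. Specifically, since $f(\omega t)=\omega^k\sigma f(t)$ has its pole at $z_i$ controlled by the pole of $f$ at $\omega z_i$, a change of variables $t\mapsto\omega t$ together with the transformation law $h(\omega t)=\omega^{-1}h(t)$ gives $\res_{t-\omega^k z_i}h=\res_{t-z_i}h$ (the Jacobian $\omega$ from $d(\omega t)$ cancels the $\omega^{-1}$). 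Hence the full sum-of-all-residues-is-zero statement over $\Gamma\bm z$ collapses to $T\sum_{i=1}^N\res_{t-z_i}\langle f_i,\iota_{t-z_i}g\rangle=0$, which gives the claim.

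\textbf{The ``if'' direction.} For the converse I would mimic the proof of Lemma \ref{srt} but with the $\Gamma$-equivariant globalization. Given $(f_1,\dots,f_N)$ orthogonal to all of $\iota(\g_{\bm z}^{\Gamma,-k-1})$ in the stated sense, I first extend the tuple $\Gamma$-equivariantly to a tuple indexed by all of $\Gamma\bm z$: set $f_{\omega^p z_i}:=$ (the Laurent series at $\omega^p z_i$ determined by requiring $\sigma f = \omega^{-k} f(\omega\,\cdot)$ to hold formally), which is well-defined because $\Gamma$ acts freely on the orbit. I then apply the scalar decomposition \eqref{gl} over the enlarged point set $\Gamma\bm z$ to split this enlarged tuple as a global part $\tilde f\in\g\otimes\C^\8_{\Gamma\bm z}(t)$ plus a tuple of Taylor series; one checks $\tilde f$ is automatically $\Gamma$-equivariant of the right weight (by uniqueness of the splitting and $\Gamma$-equivariance of the input), so $\tilde f\in\g_{\bm z}^{\Gamma,k}$. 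Finally, to show the residual Taylor tuple vanishes, I test against $g=X/(t-z_i)^{\ell}$ — but such a $g$ is not $\Gamma$-equivariant, so instead I test against its $\Gamma$-average $\sum_{p=0}^{T-1}\omega^{p(k+1)}\sigma^{-p}\!\big(X/(\omega^p t-\omega^p z_i)^\ell\big)\cdot(\text{normalization})$, which lies in $\g_{\bm z}^{\Gamma,-k-1}$. The orthogonality hypothesis applied to these averaged test functions, combined with the freeness of the $\Gamma$-action (so the orbit contributions don't collide) and non-degeneracy of $\langle\cdot,\cdot\rangle$, forces each residual Taylor component $f_i-\iota_{t-z_i}\tilde f$ to be zero, exactly as in Lemma \ref{srt}.

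\textbf{Main obstacle.} The delicate point is the ``if'' direction: one must be careful that $\Gamma$-averaging a test function $X/(t-z_i)^\ell$ produces something that still detects the $z_i$-component of the residual tuple without contamination from the other orbit points, and that the resulting linear system over the finite-dimensional space $\g$ (indexed by the coefficients of the Taylor tail) is non-degenerate. This is where freeness of the $\Gamma$-action on $\Gamma\bm z$ (equivalently $z_i\neq0$) and $\sigma$-invariance of $\langle\cdot,\cdot\rangle$ are both essential; in the companion paper \cite{VY} this is presumably handled in the more general vertex-Lie-algebra framework, and here (Appendix \ref{sec:tert}) the authors presumably give the self-contained version. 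I expect the proof in the paper follows essentially this averaging strategy, reducing the $\Gamma$-equivariant statement to the already-established scalar Strong Residue Theorem.
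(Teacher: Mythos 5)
Your proposal is correct and follows essentially the same route as the paper's proof (Appendix~\ref{sec:tert}, part~(\ref{tert1}) of Lemma~\ref{terttwo}): the ``only if'' half applies the sum-of-all-residues argument to $\langle f,g\rangle$ over the whole orbit $\Gamma\bm z$ and collapses it to $T\sum_{i}\res_{t-z_i}\langle f_i,\iota_{t-z_i}g\rangle$ using $\langle f,g\rangle(\omega t)=\omega^{-1}\langle f,g\rangle(t)$, while the ``if'' half splits the tuple into a $\Gamma$-equivariant global pole part plus Taylor tails and tests against $\Gamma$-averaged single-orbit functions, concluding by non-degeneracy of $\langle\cdot,\cdot\rangle$. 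The only slip is notational: your displayed test function $\sum_{p}\omega^{p(k+1)}\sigma^{-p}\bigl(X/(\omega^{p}t-\omega^{p}z_i)^{\ell}\bigr)$ simplifies to a function with a pole only at $z_i$ and so is not equivariant as written; the correct average has poles along the whole orbit, e.g. $\sum_{p\in\Z_T}\omega^{-p(k+1)}\,\sigma^{p}(b)\,/(\omega^{-p}t-z_i)^{\ell}\in\g_{\bm z}^{\Gamma,-k-1}$, which is exactly the test function used in the paper.
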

\begin{proof} See Appendix \ref{sec:tert}, where a generalization of this lemma is proved.
\end{proof}
\begin{cor}\label{gGcor} There is an embedding of Lie algebras $\gs_{\bm z}\hookrightarrow \gh_N$ such that $\gh_N\cong_\C \gh_N^+ \oplus \gs_{\bm z}$ as vector spaces.
\end{cor}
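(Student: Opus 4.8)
The plan is to deduce Corollary \ref{gGcor} from Lemma \ref{tert} by the same two-step argument that was used to pass from Lemma \ref{srt} to the complement $\g\otimes\C^\8_{\bm z}(t)$, but now keeping careful track of the $\Gamma$-equivariance. First I would apply Lemma \ref{tert} in the case $k=0$ to show that the cocycle \eqref{cocycle} vanishes on the image of the injection $\iota:\gs_{\bm z}\hookrightarrow\bigoplus_{i=1}^N\g\otimes\C((t-z_i))$ of \eqref{tin}. Indeed, if $f,g\in\gs_{\bm z}=\g_{\bm z}^{\Gamma,0}$ then $g'\in\g_{\bm z}^{\Gamma,-1}$ — differentiation lowers the homogeneity degree by one, since $(f(\omega t))'=\omega f'(\omega t)$, so $\sigma g(t)=g(\omega t)$ forces $\sigma g'(t)=\omega g'(\omega t)$, i.e. $g'$ satisfies the defining condition of $\g_{\bm z}^{\Gamma,-1}$ with $k=0$; one should also note $g'$ still vanishes at $\infty$ (to order $\geq 2$) and has poles only on $\Gamma\bm z$. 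Hence $\iota(f)$ satisfies the hypothesis of Lemma \ref{tert} (applied with test functions in $\g_{\bm z}^{\Gamma,-1}$, with $g'$ playing the role of the first argument), which gives $\Omega(\iota(f),\iota(g))=\sum_i\res_{t-z_i}\langle\iota_{t-z_i}f,\iota_{t-z_i}g'\rangle=0$. Therefore $\id\otimes\iota$ lifts to an embedding of Lie algebras $\gs_{\bm z}\hookrightarrow\gh_N$, where we use $\gh_N^+\cap(\text{image})$ considerations only afterwards.

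Second, I would check that the image of $\gs_{\bm z}$ in $\gh_N$ is a vector-space complement to $\gh_N^+$. Because $\g\otimes(-)$ is exact, it suffices to prove that $\C_{\Gamma\bm z}^\8(t)^\Gamma$ maps, under $\iota$, to a complement of $\bigoplus_{i=1}^N\C[[t-z_i]]$ inside $\bigoplus_{i=1}^N\C((t-z_i))$, and then twist by the corresponding statement at the level of $\g$ using the invariant form to pair $\g$ with itself — but in fact the cleanest route is: the ``if'' direction of Lemma \ref{tert} (for $k=0$) says precisely that $\iota(\gs_{\bm z})$ equals the annihilator, under the pairing $(\,f\,,\,g\,)\mapsto\sum_i\res_{t-z_i}\langle f_i,\iota_{t-z_i}g\rangle$, of $\iota(\g_{\bm z}^{\Gamma,-1})$; combined with a dimension/duality count (or directly, by exhibiting the splitting map) this forces $\bigoplus_{i=1}^N\g\otimes\C((t-z_i))\cong_\C\iota(\gs_{\bm z})\oplus\bigoplus_{i=1}^N\g\otimes\C[[t-z_i]]$. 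Concretely I would construct the splitting explicitly: given a tuple of Laurent series, the uniqueness of the decomposition \eqref{gl} lets one write it as a global rational function (with poles at all of $\Gamma\bm z$) plus a tuple of Taylor series; one then averages the global part over $\Gamma$ using $\frac1T\sum_{k\in\Z_T}\omega^{-0\cdot k}\sigma^k(\cdot)(\omega^{-k}t)$ to land in $\gs_{\bm z}$, and checks that this does not disturb the Taylor tails at the $z_i$ (here the hypothesis that the orbits $\Gamma z_i$ are pairwise disjoint and $\Gamma$ acts freely is essential: the averaging operator does not mix the expansion at $z_i$ with the expansions at $z_j$, $j\neq i$, because $\omega^{-k}z_i\notin\{z_1,\dots,z_N\}$ for $k\neq 0$). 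Adding back the centre $\C K$, which lies in $\gh_N^+$ and meets $\iota(\gs_{\bm z})$ trivially, upgrades this to $\gh_N\cong_\C\gh_N^+\oplus\gs_{\bm z}$.

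The main obstacle is the second step — verifying that the image really is \emph{complementary}, rather than merely a subalgebra on which the cocycle vanishes. Transversality ($\iota(\gs_{\bm z})\cap\gh_N^+=\{0\}$) is easy, since a $\Gamma$-equivariant rational function vanishing at $\infty$ with only Taylor expansions at every $z_i$ would have no poles at all on $\C P^1$ and vanish at $\infty$, hence be $0$; the real work is showing the sum is everything, i.e. that every local datum is hit. This is exactly where one must invoke Lemma \ref{tert} (proved via the $\Gamma$-equivariant strong residue theorem of Appendix \ref{sec:tert}) in an essential way — the ``if'' direction is not formal, because imposing $\Gamma$-equivariance shrinks the space of available global functions, and it is a genuine input that one has enough of them to span the complement. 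Once the explicit $\Gamma$-averaging splitting is written down, the disjointness-of-orbits hypothesis makes the bookkeeping go through, so I expect no further subtlety beyond organizing that computation cleanly.
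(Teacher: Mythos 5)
Your route is essentially the paper's: the cocycle vanishes on the image because $g\in\g_{\bm z}^{\Gamma,0}$ implies $\del_t g\in\g_{\bm z}^{\Gamma,-1}$, so the ``only if'' direction of Lemma \ref{tert} applies, and the complementarity is exactly the equivariant splitting \eqref{gll} of Appendix \ref{sec:tert}, which is what the paper's proof cites and whose proof is the explicit construction you sketch.

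However, one step fails as written. Your averaging operator $\frac1T\sum_{k\in\Z_T}\sigma^k(\cdot)(\omega^{-k}t)$, applied to the global part $F(t)=\sum_{i=1}^N f_i^-(t-z_i)$ coming from \eqref{gl}, does \emph{not} leave the expansions at the $z_i$ undisturbed: since the $\Gamma$-orbits are disjoint and the action on them is free, only the $k=0$ term is singular at $z_i$, and it carries the prefactor $\frac1T$, so the averaged function has principal part $\frac1T f_i^-$ there and the would-be Taylor remainder retains the pole $\bigl(1-\frac1T\bigr)f_i^-$. The correct splitting uses the plain group sum, $G(t):=\sum_{k\in\Z_T}\sigma^k F(\omega^{-k}t)$ — this is the paper's $f=\sum_i\sum_{\alpha\in\Gamma}(\alpha. f_i^-)(\alpha^{-1}t-z_i)$ in the proof of \eqref{gll}, specialised to $k=0$ — which is $\Gamma$-equivariant and reproduces the full principal part $f_i^-$ at each $z_i$, precisely because the $k\neq0$ translates are regular there. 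With that normalization corrected, the rest of your argument (transversality, since equivariance forces poles along whole orbits; triviality of the intersection with $\C K$) goes through and coincides with the paper's. A further small caution: the parenthetical reduction ``because $\g\otimes(-)$ is exact it suffices to treat $\C^\8_{\Gamma\bm z}(t)^\Gamma$'' is not available, because the condition $\sigma f(t)=f(\omega t)$ couples the $\g$-factor to the rotation in $t$, so $\gs_{\bm z}$ is not of the form $\g\otimes\bigl(\C^\8_{\Gamma\bm z}(t)\bigr)^\Gamma$ unless $\sigma=\id$; you discard that route anyway, but it should not be offered as an alternative.
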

\begin{proof} 
Note that if $g\in\g_{\bm z}^\Gamma$ then $\del_t g\in \g_{\bm z}^{\Gamma,-1}$. Hence, by the ``only if'' direction of Lemma \ref{tert}, the cocycle \eqref{cocycle} vanishes identically when restricted to the image of the embedding $\g_{\bm z}^\Gamma  \longhookrightarrow \bigoplus_{i=1}^N \g\otimes \C((t-z_i))$. Therefore this embedding lifts to an embedding  $\gs_{\bm z}\hookrightarrow \gh_N$. It is straightforward to check that  $\gh_N\cong_\C \gh_N^+ \oplus \gs_{\bm z}$ (compare \eqref{gll} in Appendix \ref{sec:tert} below).  
\end{proof}
Combining this corollary with Lemma \ref{complem}, we can regard $\M_N$ as the space of coinvariants of $\MM^k_N$ under the action of the Lie algebra $\gs_{\bm{z}}$: 
\begin{equation} \label{coinvariant T}
 \MM^k_N \big/ \gs_{\bm{z}}\cong_\C \M_N .
\end{equation}

\subsection{The cyclotomic swapping procedure}\label{sec:twistedswapping}

We are now in a position to define the Hamiltonians of the cyclotomic Gaudin model.
To the points $z_1,\dots,z_N$ we add a further non-zero marked point, say $u \in \Cx$. We treat $u$ as though it were an additional $z_i$ in the above construction, in the sense that we require the points $z_1,\dots,z_N,u$ to have pairwise disjoint $\Gamma$-orbits and we assign to the point $u$ a copy of $\gh$ -- say $\gh_{(u)}$, with central generator $K_{(u)}$ -- and alter accordingly the definition of $\gh_N$. However, whereas the $\g$-modules $\M_{(i)}$ assigned to the points $z_i$ are thus far unspecified, to the point $u$ we assign the trivial one-dimensional $\g$-module $\C \vac$ generated by a vector $\vac$ with $\g\on \vac =0$. By declaring that  $(\g \otimes \C[[t-u]])\on \vac = 0$ and $K_{(u)}\on \vac = k\vac$ we make $\C v_0$ into a module over $\g \otimes \C[[t-u]]\oplus \C K_{(u)}$. The induced $\gh_{(u)}$ module,
\be \VV_0^k  = \Ind_{\g\otimes\C[[t-u]] \oplus \C K_{(u)}}^{\gh_{(u)}} \C \vac :=  U(\gh_{(u)})\otimes_{U(\g\otimes\C[[t-u]] \oplus \C K_{(u)})} \C \vac,\ee
is called the \emph{vacuum Verma module at level $k$}. 

We now have the Lie algebra $\gs_{\bm z,u}:=  \left(\g\otimes \C^\8_{\Gamma\bm z\cup\Gamma u}(t)\right)^\Gamma$, and isomorphisms of vector spaces
\begin{equation} \label{coinvariant}
\big( \MM^k_N\otimes \VV^k_0 \big) \big/ \g_{\bm{z}, u}^{\Gamma} \cong_\C \M_N \otimes \C \vac \cong_\C \M_N .
\end{equation}
That is, the space of coinvariants is \emph{again} $\M_N$.
This allows one to construct from any $X\in \VV^k_0$ an endomorphism 
\be X(u) : \M_N  \to \M_N,\label{Xu}\ee 
as follows: 
\be X(u) : \M_N  \hookrightarrow \MM^k_N \underset{\cdot\otimes X}{\longrightarrow} \MM^k_N \otimes \VV^k_0 \twoheadrightarrow \big( \MM^k_N \otimes \VV^k_0 \big) \big/ \g_{\bm{z}, u}^{\Gamma}  \cong_\C \M_N ,\ee
where $\M_N \hookrightarrow \MM^k_N$ is the natural embedding. 
This map has the property that 
\be \left[\left(X\left(u\right)\on x\right) \otimes \vac\right] = \left[x\otimes X\right].\ee 
where we write $[\,\,\cdot\,\,]$ for the equivalence class.

To understand how, given $X\in \VV_0^k$, the endomorphism $X(u) \in \End(\M_N)$ is actually to be computed, let $A \in \g$ and consider the $\Gamma$-equivariant rational function 
\begin{equation} \label{swap function}
f(t) = \sum_{k = 0}^{T-1} \frac{\sigma^k A}{(\omega^{-k} t - u)^n} \in \gs_{\bm z,u}.
\end{equation}
This function \eqref{swap function} is regular at $z_i$ and its expansion there reads
\begin{equation*}
\iota_{t-z_i}f(t) = - \pdd \sum_{k=0}^{T-1} \sum_{p=0}^{\infty} \frac{\omega^k}{(\omega^k u - z_i)^{p+1}} (\sigma^k A)[p]^{(i)} \in \g\otimes\C[[t-z_i]],
\end{equation*}
where, recall, $A[p]^{(i)} = A^{(i)} \otimes  (t - z_i)^p \in \g\otimes\C((t-z_i))$. 
On the other hand, the expansion of \eqref{swap function} at $u$ has a singular term:
\begin{equation*}
\iota_{t-u}f(t) = A[-n] - \pdd \sum_{k=1}^{T-1} \sum_{p=0}^{\infty} \frac{\omega^{k n}}{(\omega^k - 1)^{n+p} u^{p+1}} (\sigma^k A)[p]  \in \g\otimes\C((t-u)),
\end{equation*}
where $A[p] = A \otimes (t - u)^p \in \g\otimes\C((t-u))$. For all $Y\in \MM^k_N$ and all $X\in \VV_0^k$ we have by definition $[f\on (Y\otimes X)]= 0$ or, equivalently but more explicitly,
\begin{align}
\left[ Y \otimes A[-n]\on X\right] &=  \left[ \pdd  \sum_{k=0}^{T-1} \sum_{p=0}^{\infty} \frac{\omega^k}{(\omega^k u - z_i)^{p+1}} (\sigma^k A)[p]^{(i)} \on Y\otimes X  \right] \nn\\ & \quad
+ \left[ Y \otimes  \pdd \sum_{k=1}^{T-1} \sum_{p=0}^{\infty} \frac{\omega^{k n}}{(\omega^k - 1)^{n+p} u^{p+1}} (\sigma^k A)[p]\on X\right].\nn\end{align}
In particular, for all $x\in \M_N \hookrightarrow \MM^k_N$,  
\begin{align}
\left[ x \otimes A[-n]\on X\right] &=  \left[  \pdd \sum_{k=0}^{T-1} \frac{(\sigma^k A)^{(i)} \on x}{u - \omega^{-k}z_i}  \otimes X  \right] \label{twistedswap}\\ & \quad
+ \left[ x \otimes \pdd \sum_{k=1}^{T-1} \sum_{p=0}^{\infty} \frac{\omega^{k n}}{(\omega^k - 1)^{n+p} u^{p+1}} (\sigma^k A)[p]\on X\right]
\nn\end{align}
The space $\VV_0^k$ is spanned by vectors of the form 
\be A_1[-n_1]A_2[-n_2]\dots A_k[-n_k]\vac,\qquad A_i\in \g,\, n_i\in \Z_{>0},\, 1\leq i\leq k.\label{genvec}\ee 
and there is a natural $\Z$-gradation on $\VV_0^k$ in which the vector \eqref{genvec} has grade $-\sum_{i=1}^k n_i$. In the identity \eqref{twistedswap}, which we shall call the \emph{cyclotomic swapping identity}, both $X$ and $(\sigma^kA)[p]X$, $p\geq 0$, have grades strictly lower than $A[-n]X$. Thus, by applying \eqref{twistedswap} a finite number of times, any class $[x\otimes X]$ can be expressed as a linear combination of classes of the form $[x'\otimes \vac]$, which amounts to computing the map $X(u)$. Call this procedure the \emph{cyclotomic swapping procedure}.

\begin{rem}\label{sirem}
The first term on the right-hand side of \eqref{twistedswap} is the obvious analog of the result of swapping the generator $A[-n]$ over to the spin chain $\M_N$ in the usual case, cf. equation (3.5) in \cite{FFR}. A more subtle new feature of the cyclotomic case is the presence of the extra terms on the right-hand side coming from the ``self-interactions'' between the poles at $u, \omega u, \ldots, \omega^{T-1} u$. To stress the point: since these terms are all of the form $\sigma^k A[p] X$ with $p \geq 0$, they are of lower grade than the initial term $A[-n] X$, and hence by a procedure of repeatedly rewriting these terms in canonical form \eqref{genvec} using the commutation relations of $\gh_{(u)}$ and then ``swapping off'' the outermost factor, a finite number of iterations always suffices to reach the vacuum state $\vac$. But, in general, doing so produces many (apparently intricate) correction terms to the naive result. 
\end{rem}
\subsection{Singular vectors, $\mf Z(\gh)$, and commuting Hamiltonians}\label{singsec}
A vector $X\in \VV_0^k$ is said to be \emph{singular} (or more fully, a \emph{singular vector of imaginary weight}) if
\be A\on X=0\quad\text{ for all } \quad A\in \g \otimes \C[t].\label{singdef}\ee

The singular vectors form a linear subspace of $\VV_0^k$ denoted $\mf Z(\gh)$. 
The crucial thing to check is that the argument given in \cite[Proposition 2]{FFR} to show the commutativity of all maps $\M_N\to \M_N$ associated to singular vectors still goes through in the cyclotomic set-up given above.
\begin{prop}\label{prop:Zcom}
Let $Z_1,Z_2\in \mf Z(\gh)$. Then for any pair of non-zero complex numbers $u$ and $v$ such that the points $z_1,\dots,z_N,u,v$ have pairwise disjoint $\Gamma$-orbits, the corresponding linear operators $Z_1(u)$ and $Z_2(v)$ on $\M_N$ commute. 
\end{prop}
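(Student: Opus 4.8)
The plan is to follow the strategy of \cite[Proposition 2]{FFR} and reduce the commutativity of $Z_1(u)$ and $Z_2(v)$ to the interplay between the cyclotomic swapping identity \eqref{twistedswap} and the singularity condition \eqref{singdef}. First I would introduce \emph{two} extra marked points $u,v\in \Cx$ simultaneously, requiring $z_1,\dots,z_N,u,v$ to have pairwise disjoint $\Gamma$-orbits, and assign to each of $u,v$ a copy of $\VV_0^k$ (with vacua $\vac^{(u)},\vac^{(v)}$). As in \S\ref{sec:twistedswapping}, the space of coinvariants $\bigl(\MM^k_N\otimes\VV^k_{0,(u)}\otimes\VV^k_{0,(v)}\bigr)\big/\gs_{\bm z,u,v}$ is again identified with $\M_N$, via the class of $x\otimes Z_1\otimes Z_2$. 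Both composites $Z_1(u)\circ Z_2(v)$ and $Z_2(v)\circ Z_1(u)$ compute, from $x\in\M_N$, a representative of this same class $[x\otimes Z_1\otimes Z_2]$ --- \emph{provided} we can show that the cyclotomic swapping procedure applied to the $u$-tensorand does not disturb the $v$-tensorand being already equal to $\vac^{(v)}$ (and symmetrically). That is the whole content of the proposition.

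The key step is therefore the following: starting from $[x\otimes Z_1\otimes \vac^{(v)}]$, we run the cyclotomic swapping procedure to move all modes $A[-n]$ acting on $Z_1$ off onto the spin chain $\M_N$. In the \emph{ordinary} Gaudin case the swapping identity produces (i) a term on the spin chain and (ii) nothing on the third slot, because the function analogous to \eqref{swap function} built from a pole at $u$ is regular at $v$, so its expansion at $v$ lies in $\g\otimes\C[[t-v]]$ and annihilates $\vac^{(v)}$. In our cyclotomic setting I would take, for $A\in\g$ and $n\geq 1$, the $\Gamma$-equivariant function
\begin{equation*}
f(t)=\sum_{k=0}^{T-1}\frac{\sigma^k A}{(\omega^{-k}t-u)^n}\in\gs_{\bm z,u,v},
\end{equation*}
exactly as in \eqref{swap function}. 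Its expansion at $u$ has the singular part $A[-n]$ plus the ``self-interaction'' tail $\sum_{k=1}^{T-1}(\dots)(\sigma^kA)[p]$ with $p\geq0$ (all of lower grade on the $u$-slot); its expansions at the $z_i$ lie in $\g\otimes\C[[t-z_i]]$, giving the spin-chain term; and --- crucially --- since none of the points $\omega^\ell v$ is a pole of $f$, the expansion $\iota_{t-v}f(t)$ lies in $\g\otimes\C[[t-v]]$. Because $Z_2$ is \emph{singular}, i.e. annihilated by $\g\otimes\C[t]$ and hence by $\g\otimes\C[[t-v]]$ acting on it (this is the standard fact that singular vectors of imaginary weight are killed by all of $\g\otimes\C[[t-v]]$, not merely by the non-negative modes in the coordinate at $v$), the $v$-tensorand stays equal to $\vac^{(v)}$ after the swap. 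Thus the swapping procedure that computes $Z_1(u)$ never touches the $\vac^{(v)}$ slot, and we obtain $[Z_1(u)Z_2(v)x\otimes\vac^{(u)}\otimes\vac^{(v)}]=[x\otimes Z_1\otimes Z_2]$; the symmetric argument gives the same class for $Z_2(v)Z_1(u)x$. Since the representative in $\M_N\otimes\C\vac^{(u)}\otimes\C\vac^{(v)}$ is unique, $Z_1(u)Z_2(v)x=Z_2(v)Z_1(u)x$ for all $x$, as required.

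The main obstacle --- and the only place the cyclotomic decoration changes anything substantive --- is verifying that the new ``self-interaction'' tail in $\iota_{t-u}f(t)$ does not spoil the argument. Here I would invoke exactly the structure already emphasised in Remark \ref{sirem}: every term in that tail is of the form $(\sigma^kA)[p]$ with $p\geq0$ acting on the $u$-slot, hence strictly lowers the $\Z$-grade on $\VV^k_{0,(u)}$; rewriting such terms in the canonical form \eqref{genvec} using the commutation relations of $\gh_{(u)}$ and iterating the swap terminates after finitely many steps at $\vac^{(u)}$. Since all of these intermediate manipulations take place on the $\MM^k_N$ and $\VV^k_{0,(u)}$ factors only, and never generate a non-negative-coordinate mode at $v$ acting on $Z_2$, the singularity of $Z_2$ continues to guarantee the $v$-slot is untouched throughout. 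One should also note in passing that the same functions built with a pole at $v$, when expanded at $u$, lie in $\g\otimes\C[[t-u]]$ and so annihilate the singular vector $Z_1$ --- this is the symmetric statement --- and that the whole discussion is independent of the auxiliary level $k$ and of the choice of $\g$-modules $\M_{(i)}$. With these observations the proof is complete.
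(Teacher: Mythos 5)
Your argument is correct and follows essentially the same route as the paper's proof: assign a copy of $\VV_0^k$ to each of $u$ and $v$, identify the enlarged space of coinvariants with $\M_N$, note that the $\Gamma$-equivariant swap function based at one auxiliary point expands as a Taylor series at the other and therefore annihilates the singular vector (or the vacuum) sitting there, and conclude by uniqueness of the representative in $\M_N\otimes\C\vac\otimes\C\vac$ that the two composites agree. (The only slip is that, in your ``key step'', the invariance of the $\vac^{(v)}$ slot should be attributed to the vacuum itself being singular rather than to the singularity of $Z_2$, whose singularity is instead what is needed in the other swap; since you state the symmetric facts explicitly, this does not affect the argument.)
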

\begin{proof}
Assign to each of the points $u$ and $v$ a copy of $\VV_0^k$. Then 
\be \big(\MM^k_N\otimes \VV_0^k \otimes \VV_0^k \big)\big/ \g_{\bm z,u,v}^\Gamma \cong_\C \M_N \otimes \C \vac \otimes \C\vac \cong_\C \M_N. \ee
Hence, to any $X,Y\in \VV^k_0$ we can associate an endomorphism $(X,Y)(u,v) : \M_N \to \M_N$ defined by
\be (X,Y)(u,v) : \M_N \hookrightarrow \MM^k_N \xrightarrow[\cdot\otimes X\otimes Y]{} \MM^k_N\otimes \VV_0^k\otimes \VV_0^k \twoheadrightarrow \big( \MM^k_N \otimes \VV_0^k \otimes \VV_0^k\big) \big/ \g_{\bm{z}, u,v}^\Gamma  \cong_\C \M_N.\ee
This map has the property that 
\be \left[\left((X,Y)\left(u,v\right)\on x\right) \otimes \vac\otimes \vac \right] 
   = \left[x\otimes X\otimes Y\right].\ee 
We now claim that if $X$ and $Y$ are both singular, then $(X,Y)(u,v) = X(u)Y(v)$ and  $(X,Y)(u,v) = Y(v)X(u)$, and hence  $[X(u),Y(v)]=0$ as required. 

Indeed, consider starting with $[x\otimes X\otimes Y]$, writing $X$  as a linear combination of terms of the form \eqref{genvec}, and then ``swapping'' the factors of $X$ as discussed in \S\ref{sec:twistedswapping}. In addition to the terms on the right of \eqref{twistedswap}, ``swapping'' $A[-n]$ now also produces the following term acting on the copy of $\VV_0^k$ assigned to the point $v$: 
\be \pdd  \sum_{k=0}^{T-1} \sum_{p=0}^{\infty} \frac{\omega^k}{(\omega^k u - v)^{p+1}} (\sigma^k A)[p]^{(v)}. 
\ee
But, by definition of a singular vector, $Y$ is annihilated by this term if $Y$ is singular. In this way $$[x\otimes X\otimes Y] = [X (u)\on x\otimes \vac \otimes Y]= [Y(v)X(u)\on x\otimes \vac\otimes \vac],$$ where in the second step we swap from the point $v$ as usual (noting that the state $\vac$ is of course a singular vector). 
\end{proof}

It is known (see \cite{FFR}) that the space of singular vectors $\mf Z(\gh)$ becomes very large when $k$ takes the value $-h^\vee$, where $h^\vee$ is the dual Coxeter number of $\g$. This value is called the \emph{critical level} and $\VV_0^{-h^\vee}$ the \emph{vacuum Verma module at the critical level}. In the remainder of the paper we always take $k=-h^\vee$.\footnote{Recall that $\langle \cdot,\cdot\rangle = \frac{1}{2h^\vee} (\cdot,\cdot)_K$ where $(\cdot,\cdot)_K$ is the Killing form $(X,Y)_K= \tr_\g \ad_X \ad_Y$ \cite{KacBook}. So if one works throughout with respect to $(\cdot,\cdot)_K$ then the critical level is $-\half$ for all simple $\g$.}

\subsection{The cyclotomic Gaudin algebra}\label{GGA}
Thus far the modules $\M_{(i)}$ assigned to the points $z_i$ have been left unspecified. If we now take each $\M_{(i)}$ to be a copy of $U(\g)$, regarded as a left $U(\g)$-module, then to each $X\in \VV_0^k$  we have a linear map $X(u):U(\g)^{\otimes N} \to U(\g)^{\otimes N}$ and, cf. \S\ref{sec:twistedswapping}, this map is constructed using the  left action of $U(\g^{\oplus N}) \cong U(\g)^{\otimes N}$ on itself. 
But $U(\g)$ is not merely a left $U(\g)$-module but a $U(\g)$-bimodule, i.e. $U(\g)$ acts on itself from the left and from the right and these actions commute. Therefore the maps $X(u)$ commute with the right action of  $U(\g)^{\otimes N}$ on itself. It follows that $X(u)$ must act by left multiplication by a fixed element of $U(\g)^{\otimes N}$, which by abuse of notation we also call $X(u)$. (For indeed, if $1^{\otimes N}\xmapsto{X(u)} X(u)$ then, multiplying on the right by any $a\in U(\g)^{\otimes N}$, we have $a\xmapsto{X(u)} X(u) a$.) 
By construction, for any choices of the modules $\mc M_{(i)}$, $1\leq i\leq N$, the map $X(u):\mc M_N\to \mc M_N$ of \eqref{Xu} agrees with the left action of this element $X(u)\in U(\g)^{\otimes N}$ on $\mc M_N$.

For each $X\in \Vcrit$, $X(u)$ is a meromorphic $U(\g)^{\otimes N}$-valued function of the complex variable $u$. 

Let $\mathscr Z^\Gamma_{\bm z}(\g,\sigma)$ denote the span, in $U(\g)^{\otimes N}$, of all the coefficients of singular terms of Laurent expansions of the operators $Z(u)$ as $Z$ varies in the space of singular vectors $\mf Z(\gh)\subset \Vcrit$. At this stage, we have established the following.
\begin{thm}\label{Zthm}
$\mathscr Z^\Gamma_{\bm z}(\g,\sigma)$ is a commutative subalgebra of $U(\g)^{\otimes N}$.\qed
\end{thm}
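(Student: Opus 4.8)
The plan is to assemble Theorem~\ref{Zthm} from the pieces already in place: Proposition~\ref{prop:Zcom} gives commutativity of the individual operators $Z_1(u)$ and $Z_2(v)$ for singular vectors $Z_1,Z_2\in \mf Z(\gh)$, while the discussion in \S\ref{GGA} shows that each $X(u)$ is left-multiplication by a well-defined element of $U(\g)^{\otimes N}$ depending meromorphically on $u$. So the content of the theorem is twofold: (i) $\mathscr Z^\Gamma_{\bm z}(\g,\sigma)$, defined as the span of the coefficients in the Laurent expansions of the $Z(u)$, is closed under multiplication, and (ii) these coefficients pairwise commute.

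First I would address (ii). Fix singular vectors $Z_1, Z_2 \in \mf Z(\gh)$ and pick $u, v$ with $z_1,\dots,z_N,u,v$ having pairwise disjoint $\Gamma$-orbits. By Proposition~\ref{prop:Zcom}, $[Z_1(u), Z_2(v)] = 0$ as elements of $U(\g)^{\otimes N}$ (equivalently as operators on every $\M_N$). Both sides are meromorphic in $u$ and $v$ on the locus where this disjointness holds, with poles located only along $\Gamma u \cap \Gamma z_i \neq \emptyset$, $\Gamma v \cap \Gamma z_i \neq \emptyset$, or $\Gamma u \cap \Gamma v \neq \emptyset$. Hence the Laurent coefficients of $Z_1(u)$ at $u = z_i$ (or rather their $\Gamma$-translates, which is the natural way to extract the elements spanning $\mathscr Z^\Gamma_{\bm z}$) commute with those of $Z_2(v)$ at $v = z_j$: one expands $[Z_1(u),Z_2(v)] = 0$ as a double Laurent series about the relevant points and reads off that each coefficient, which is a commutator of a coefficient of $Z_1$ with a coefficient of $Z_2$, vanishes. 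Since $\mf Z(\gh)$ is a vector space and $X \mapsto X(u)$ is linear, the span $\mathscr Z^\Gamma_{\bm z}(\g,\sigma)$ of all such coefficients is then a commutative subspace of $U(\g)^{\otimes N}$.

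For (ii) — that $\mathscr Z^\Gamma_{\bm z}(\g,\sigma)$ is closed under multiplication — one uses the same two-point construction $(X,Y)(u,v)$ introduced in the proof of Proposition~\ref{prop:Zcom}. For singular $Z_1, Z_2$ that proof establishes $(Z_1,Z_2)(u,v) = Z_1(u)Z_2(v)$. Now $(Z_1,Z_2)(u,v)$ is computed by the cyclotomic swapping procedure applied to $\big(\MM^k_N \otimes \VV_0^k \otimes \VV_0^k\big)/\g^\Gamma_{\bm z,u,v}$, and an inspection of that procedure shows that the resulting element of $U(\g)^{\otimes N}$ has, in the $u$ and $v$ variables, only poles along $\Gamma u$ or $\Gamma v$ meeting $\Gamma z_i$, or $\Gamma u$ meeting $\Gamma v$. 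Taking iterated residues/Laurent coefficients at $u = z_i$ and $v = z_j$ (and their $\Gamma$-translates) of the identity $(Z_1,Z_2)(u,v) = Z_1(u)Z_2(v)$ expresses the product of any coefficient of $Z_1(u)$ with any coefficient of $Z_2(v)$ as a finite $\C$-linear combination of coefficients of the single-point operator $Z(u)$ for suitable singular vectors $Z$. Concretely, the "partial fraction in $v$" of the two-point construction can be reorganised so that each term is of the form $W(u)$ for $W$ obtained from $Z_1, Z_2$ by the vertex-algebra/OPE-type manipulations, and each such $W$ is again singular because the space $\mf Z(\gh)$ is closed under the relevant operations (normally-ordered products and derivatives of fields acting on singular vectors produce singular vectors; this is the standard Feigin--Frenkel fact underpinning \cite{FFR}).

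The main obstacle I anticipate is making step (i) genuinely rigorous, i.e. checking that the product $Z_1(u)Z_2(v)$, expanded appropriately, lies in the span of single-point coefficients rather than merely in some larger space — this is exactly the point where one must show that the two-point swapping construction produces only expressions that "factor through" one-point operators attached to singular vectors, and that no new, independent elements of $U(\g)^{\otimes N}$ appear. This is conceptually identical to the corresponding step in \cite[Proposition~2]{FFR}, and the only thing to verify is that the extra "self-interaction" terms in the cyclotomic swapping identity \eqref{twistedswap} (cf. Remark~\ref{sirem}) do not spoil this: since those terms are all of the form $(\sigma^k A)[p]\cdot X$ with $p \geq 0$, they act within $\VV_0^k$ and by singularity of the vectors involved either vanish or again produce singular vectors, so the argument closes. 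Everything else is bookkeeping with meromorphic dependence on the spectral parameters. \qed
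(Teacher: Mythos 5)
Your treatment of commutativity coincides with the paper's own proof: Theorem \ref{Zthm} is stated as already established because it is meant to follow directly from the construction of \S\ref{GGA} (each $X(u)$ is left multiplication by an element of $U(\g)^{\otimes N}$ depending meromorphically on $u$) together with Proposition \ref{prop:Zcom}, whose identity $[Z_1(u),Z_2(v)]=0$ for all admissible $u,v$ is expanded in Laurent series to give commutativity of all the spanning coefficients --- exactly your first step. The paper makes no attempt at the multiplicative-closure statement you isolate: once the coefficients pairwise commute, the subalgebra they generate is commutative, and that is all that is ever used later (only the operators $Z(u)$ and their residues act, e.g.\ in \S\ref{sec:quadH} and Theorem \ref{evalthm}).

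Your additional argument that the \emph{span} itself is closed under products goes beyond the paper, and as sketched it has a genuine gap. The identity $(Z_1,Z_2)(u,v)=Z_1(u)Z_2(v)$ together with vertex-algebra associativity controls the expansion of the two-point object only as $v\to u$: there the coefficients are indeed of the form $W(u)$ with $W=Z_{1,(n)}Z_2\in\mf Z(\gh)$, using that $\mf Z(\gh)$ is closed under the $n$-th products. But what you need are \emph{iterated} Laurent coefficients at two distinct marked points $u=z_i$, $v=z_j$, and the diagonal OPE says nothing about these; if you try to reach them by contour or partial-fraction manipulation, the residues in $u$ at the other poles reproduce expressions of the form (coefficient of $Z_1$ at $z_k$)$\,\times\,Z_2(v)$, so the claimed reduction to one-point coefficients does not obviously terminate, and in the cyclotomic setting there are further poles at $u=\omega^k v$ with $k\neq 0$ and at the origin (compare the $u^{-1}$ and $u^{-2}$ terms appearing in \S\ref{sec:quadH} and the self-interaction terms of \eqref{twistedswap}, Remark \ref{sirem}) which your sketch does not address at all. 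So either this reduction needs a genuine proof, or you should do what the paper implicitly does: take the content of the theorem to be that the coefficients commute, whence the (generated) subalgebra is commutative, which is immediate from Proposition \ref{prop:Zcom}.
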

Let us call $\mathscr Z^\Gamma_{\bm z}(\g,\sigma)$ the \emph{cyclotomic Gaudin algebra}. It depends on the choice of marked points $\bm z = \{z_1,\dots, z_N\}$, the cyclic group $\Gamma\cong \Z/T \Z$, and the automorphism $\sigma:\g\to \g$. In the special case $\Gamma=\{1\}$ one recovers the Gaudin algebra $\mathscr Z_{\bm z}(\g)$ of \cite{Freview}.

\subsection{The quadratic Hamiltonians}\label{sec:quadH}
Let $I_a\in \g$, $a=1,\dots,\dim(\g)$, be a basis of $\g$ and $I^a\in \g$ a dual basis with respect to $\langle\cdot,\cdot\rangle$. 
The non-trivial singular vector of smallest degree in $\Vcrit$ is 
\be S := \half I^a[-1] I_a[-1] \vac \in \Vcrit,\label{Sdef}\ee 
corresponding to the quadratic Casimir element $\mc C:=\half I^{a}I_a \in Z(U(\g))$. (For brevity, we shall always employ summation convention for the index $a$.)  
Applying the above reasoning to $S$ yields 
\begin{multline} \label{quad swap}
\big[ x \otimes \half I^a[-1] I_a[-1] \vac \big] = \left[ \left( \sum_{i=1}^N \sum_{k=0}^{T-1} \frac{(\sigma^k I^a)^{(i)} }{u - \omega^{-k}z_i} \cdot x \right) \otimes \ha I_a[-1] \vac \right]\\
+ \sum_{p=1}^{T-1} \frac{\omega^p}{(\omega^p - 1) u} \big[ x \otimes \ha (\sigma^p I^a)[0] I_a[-1] \vac \big]
+ \sum_{p=1}^{T-1} \frac{\omega^p}{(\omega^p - 1)^2 u^2} \big[ x \otimes \ha (\sigma^p I^a)[1] I_a[-1] \vac \big].
\end{multline}
The first term on the right is ready for applying the next ``swapping'' and produces
\begin{equation*}
\sum_{k = 0}^{T-1} \sum_{i = 1}^N \frac{\mc C^{(i)} }{(u - \omega^{-k} z_i)^2} + \sum_{k = 0}^{T-1} \sum_{i = 1}^N \frac{\omega^k \sigma^k \widetilde{\mathcal{H}}_i}{u - \omega^{-k} z_i},
\end{equation*}
where $\mc C^{(i)} = \half I^{a (i)} I_a^{(i)}$ (which is invariant under the automorphism $\sigma$) and $\widetilde{\mathcal{H}}_i$ denotes the ``naive'' cyclotomic quadratic Hamiltonians given explicitly by
\begin{equation*}
\widetilde{\mathcal{H}}_i := \sum_{p = 0}^{T-1} \sum_{\substack{j=1\\j \neq i}}^N \frac{I^{a (i)} \sigma^p I_a^{(j)}}{z_i - \omega^{-p} z_j} 
+ \half\sum_{p = 1}^{T-1} \frac{[I^{a (i)}, \sigma^p I_a^{(i)}]_+}{(1 - \omega^{-p}) z_i}.
\end{equation*}
Here $[a, b]_+ := ab + ba$ denotes the anti-commutator of two elements $a, b \in U(\g)$.
Further ``swapping'' also needs to be applied to the last two terms on the right-hand side of \eqref{quad swap}. This will in principle generate correction terms to the above result. 
The second term on the right-hand side of \eqref{quad swap} can be rewritten as
\begin{equation*}
\half\sum_{p=1}^{T-1} \frac{1}{(1 - \omega^{-p}) u} \big[ x \otimes \big[ \sigma^p I^a, I_a \big][-1] \vac \big],
\end{equation*}
which, after applying the swapping procedure as in \eqref{twistedswap}, gives
\begin{align*}
&\half\sum_{k=0}^{T-1} \sum_{i=1}^N \sum_{p=1}^{T-1} \frac{1}{(1 - \omega^{-p}) u (u - \omega^{-k} z_i)} \big[ \sigma^k \big[ \sigma^p I^{a (i)}, I^{(i)}_a \big] x \otimes \vac \big]\\
&= \half\sum_{k=0}^{T-1} \sum_{i=1}^N \sum_{p=1}^{T-1} \frac{1}{(1 - \omega^{-p}) z_i} \left( \frac{1}{u - \omega^{-k} z_i} - \frac{1}{u} \right) \big[ \omega^k \sigma^k \big[ \sigma^p I^{a (i)}, I^{(i)}_a \big] x \otimes \vac \big].
\end{align*}
Here, the simple pole at $u = \omega^{-k} z_i$ leads to a correction in the Hamiltonians, namely
\begin{align}
\mathcal{H}_i &:= \widetilde{\mathcal{H}}_i - \half\sum_{p = 1}^{T-1} \frac{[I^{a (i)}, \sigma^p I_a^{(i)}]}{(1 - \omega^{-p}) z_i}\nn\\ 
&= \sum_{p = 0}^{T-1} \sum_{\substack{j=1\\j \neq i}}^N \frac{I^{a (i)} \sigma^p I_a^{(j)}}{z_i - \omega^{-p} z_j} + \sum_{p = 1}^{T-1} \frac{\sigma^p I^{a (i)} I_a^{(i)}}{(1 - \omega^{-p}) z_i},
\label{tildeHdef}
\end{align}
while the apparent simple pole at $u=0$ is actually vanishing, as follows. For any $k\in \Z_T$, $\sigma^k I_a$ is a basis of $\g$, with dual basis $\sigma^k I^a$. And $[\sigma^p I^a, I_a]$ is an element of $\g$ dependent on $p$ but not on the choice of basis $I_a$. Thus for all $k\in \Z_T$,
$\sigma^k [ \sigma^p I^a, I_a] 
= [\sigma^p (\sigma^k I^a), (\sigma^k I_a)]
= [\sigma^p I^a, I_a]$. Hence in the $1/u$ term we have a factor $\sum_{k\in \Z_T} \omega^k = 0$.

The final term in the right-hand side of \eqref{quad swap} may be written as
\begin{equation*}
\sum_{p=1}^{T-1} \frac{\omega^p}{(\omega^p - 1)^2 u^2} \big[ x \otimes \ha \langle \sigma^p I^a, I_a\rangle k \, \vac \big] = \half \sum_{p=1}^{T-1} \frac{\omega^p\langle \sigma^p I^a, I_a\rangle k}{(\omega^p-1)^2 u^2} \big[ x \otimes \vac \big],
\end{equation*}
which has a double pole at $u = 0$. Putting all of the above together, the element $S(u)\in U(\g)^{\otimes N}$ corresponding to the vector $S = \ha I^a[-1] I_a[-1] \vac$ is
\begin{align}
S(u) &= \sum_{i = 1}^N \sum_{p = 0}^{T-1} \frac{\mc C^{(i)}}{(u - \omega^{-p} z_i)^2} 
 + \sum_{i = 1}^N \sum_{p = 0}^{T-1} \frac{\omega^p \sigma^p \mathcal{H}_i}{u - \omega^{-p} z_i} 
+ \frac{1}{2u^2} \sum_{p=1}^{T-1} \frac{\omega^p\langle \sigma^p I^a, I_a\rangle k}{(\omega^p-1)^2} ,
\end{align}
which has second order poles at $u = \omega^{-k} z_i$, $i = 1, \ldots, N$, $k = 1, \ldots, T - 1$ as well as at $u = 0$.
 
Proposition \ref{prop:Zcom} states in particular that $[S(u),S(v)] = 0$. Therefore the (complex-analytic) residue of $[S(u), S(v)]$ at $u=z_i$, $v=z_j$ vanishes. That is, the Hamiltonians $\mathcal{H}_i \in U(\g)^{\otimes N}$, $i=1,\dots, N$, are mutually commuting.

\section{Automorphism equivariance in the Wakimoto construction}\label{sec:wc}
In the preceding section we constructed the cyclotomic Gaudin algebra $\mathscr Z_{\bm z}^\Gamma(\g,\sigma)$, a commutative subalgebra of $U(\g)^{\otimes N}$. In the remainder of the paper we specialize to spin chains whose Hilbert spaces are tensor products of Verma modules $M_\lambda$ over $\g$. Let us first recall the definition of $M_\lambda$ and its contragredient dual $M_\lambda^*$.

\subsection{Verma and contragredient Verma modules}\label{sec:verm}
Introduce a Cartan decomposition of $\g$,
\be \g = \n_- \oplus \h \oplus \n .\label{cartandecomp}\ee
Let $\Delta^+\subset\h^*$ be the set positive roots of $\g$, and $\{\alpha_i\}_{i\in I}\subset \Delta^+$ a set of simple roots, where $i$ runs over the nodes $I :=\{1,2,\dots,\rank(\g)\}$ of the Dynkin diagram. For each $\alpha\in \Delta^+$, let $E_\alpha\in \n$ (resp. $F_\alpha\in\n_-$) be a root vector of weight $\alpha$ (resp. $-\alpha$) 
and $H_\alpha\equiv \alpha^\vee:=[E_{\alpha},F_{\alpha}]$ the coroot, with normalizations chosen such that $\alpha(\alpha^\vee) = 2$. As usual we identify $\h$ with $\h^*$ by means of the inner product $\langle\cdot,\cdot\rangle$ and then $\alpha^\vee = 2 \alpha\big/\langle \alpha,\alpha \rangle$. 
By abuse of notation we write $H_i\equiv H_{\alpha_i}$ for the simple coroots. Then 
\be \{E_\alpha,F_\alpha\}_{\alpha\in \Delta^+}\cup \{H_i\}_{i\in I}\label{CartanWeyl}\ee 
is a convenient choice of Cartan-Weyl basis of $\g$.

The \emph{Verma module} $M_\lambda$, $\lambda\in \h^*$, over $\g$ is by definition the induced $\g$-module
\be M_{\lambda} = \Ind_{\h\oplus \n}^\g \C v_\lambda :=U(\g) \otimes_{U(\h\oplus \n)} \C v_{\lambda} \ee
where $\C v_{\lambda}$ is the one-dimensional module over $\h\oplus \n$ generated by a vector $v_{\lambda}$ with $\n \on v_{\lambda}=0$ and $h\on v_{\lambda} =  \lambda(h)v_\lambda$ for all $h\in \h$.
Recall that every Verma module is a weight module whose weight spaces are finite-dimensional, i.e. $M_\lambda = \bigoplus_{\mu\in \h^*} (M_\lambda)_{\mu}$ with  $(M_{\lambda})_{\mu} := \{v\in M_{\lambda}: h.v=\mu(h) v\text{ for all } h\in \h \}$, $\dim(M_\lambda)_\mu <\8$. Given any weight module $\mc M$ whose weight spaces are finite-dimensional, its \emph{restricted dual} is the vector space $\mc M^\resd := \bigoplus_{\mu\in \h^*} (\mc M_{\mu})^*$. There is an anti-involution $\iota:\g\to\g$, called the \emph{Cartan anti-involution} of $\g$, given by
\begin{subequations}\label{iotadef}
\be \iota(E_\alpha)=F_\alpha,\quad \iota(F_\alpha) = E_\alpha,
\quad \alpha\in \Delta^+,
\qquad \iota(H_i) = H_i, \quad i\in I.\ee
By means of $\iota$, $\mc M^\resd$ becomes a left $\g$-module called the \emph{contragredient dual} of $\mc M$: 
\be (X\on \kappa)(v) := \kappa(\iota(X)\on v),\ee\end{subequations} for all $\kappa\in \mc M^\resd$, $v\in \mc M$ and $X\in \g$. The module $\mc M^\resd$ is also a weight module whose weight spaces are finite-dimensional (indeed, $\dim \mc M^\resd_\mu = \dim \mc M_\mu$) and there is an isomorphism  of $\g$-modules $(\mc M^\resd)^\resd \cong_\g \mc M$.
The contragredient dual of a Verma module $M_{\lambda}$ is called a \emph{contragredient Verma module} and is, by convention, denoted $M_{\lambda}^*$.

(The Verma module $M_0$ over $\g$ should of course not be confused with the Verma module $\VV^k_0$ over $\gh$ introduced previously.)

\subsection{Contragredient action of Hamiltonians}
We pick weights $\lambda_1,\dots,\lambda_N\in \h^*$. We shall consider Gaudin models whose  Hilbert space is the tensor product of Verma modules
\be M_{(\bm\lambda)} := M_{\lambda_1} \otimes \dots \otimes M_{\lambda_N}.\label{Hilbert}\ee
Commuting Hamiltonians on $M_{(\bm\lambda)}$ can be constructed directly by taking $\mc M_{i} = M_{\lambda_i}$ in \S\ref{ZTGsec}. However, in order to construct simultaneous eigenvectors of the Hamiltonians we in fact begin by taking $\mc M_i=M_{\lambda_i}^*$, so that rather than $M_{(\bm\lambda)}$ we have the tensor product of contragredient Verma modules
\be M^*_{(\bm\lambda)} := M^*_{\lambda_1} \otimes \dots \otimes M^*_{\lambda_N}.\nn\ee
The restricted dual of $M^*_{(\bm\lambda)}$ is $M_{(\bm\lambda)}$, and this is how $M_{(\bm\lambda)}$ is to be thought of in what follows. In particular, the eigenvectors  we construct will be linear maps
\be \eta : M^*_{(\bm\lambda)} \to \C \nn\ee
lying in this restricted dual. In this perspective, the action of $X(u)$, $X\in \Vcrit$, on a general element $\eta\in M_{(\bm\lambda)}$ is via the contragredient dual: i.e. $X(u)\on \eta$ is given by 
\be (X(u) . \eta)(v) := \eta( \iota(X(u)) . v) \nn\ee
for all $v\in M^*_{(\bm\lambda)}$.

To indicate the reason for this indirect approach, let us sketch in outline the construction below. What distinguishes the contragredient Verma module $M_{\lambda}^*$ is that it arises as the grade-zero component of a \emph{Wakimoto module} $W_{\lambda(t)}$, where $\lambda(t)= \lambda \otimes t^{-1}+\mc O(t^0)$. Wakimoto modules are defined initially as $\Z$-graded modules over the Lie algebra $\Heis(\g)\oplus \h\otimes\C((t))$, where $\Heis(\g)$ is the \emph{Heisenberg Lie algebra} associated to $\g$. By means of the \emph{Wakimoto construction}/\emph{free field realization}, Wakimoto modules become also $\Z$-graded modules over $\gh$. The merit of the Lie algebra $\Heis(\g)\oplus \h\otimes\C((t))$ in comparison with $\gh$ is that it is commutative modulo central elements. This simpler structure allows one to construct eigenvectors of the Gaudin Hamiltonians and to find explicit expressions for their eigenvalues.

We now recall the necessary details of the Wakimoto construction. The starting point is the finite-dimensional setting.

\subsection{Realization of $\g$ by differential operators on $N_+$}\label{fds} 
Consider the unital associative algebra generated by $\{x_\alpha,\del_\alpha\}_{\alpha\in \Delta^+}$ subject to the relations 
\be [x_\alpha,x_\beta] = 0,\quad 
[\del_\alpha,x_\beta] = \delta_{\alpha\beta} 1,\quad 
[\del_\alpha,\del_\beta] = 0,
, \qquad \alpha,\beta\in \Delta^+.\nn\ee
Call this $\Weyl(\g)$, the Weyl algebra of $\g$.
There is an injective homomorphism of algebras $\bar\rho :U(\g)\to\Weyl(\g)$ given by
\be
\bar\rho(E_{\alpha}) = \sum_{\beta\in \Delta^+} P_{\alpha}^\beta(x) \del_\beta, \label{barrhodef}\qquad
\bar\rho(H) = -  \sum_{\beta\in \Delta^+} \beta(H) x_\beta \del_\beta,  \qquad
\bar\rho(F_{\alpha}) = \sum_{\beta\in \Delta^+} Q_{\alpha}^\beta(x) \del_\beta,
\ee
for certain polynomials $P_\alpha^\beta(x), Q_\alpha^\beta(x)\in \C[x_\gamma]_{\gamma\in \Delta^+}$ such that 
\be\deg P_\alpha^\beta = \beta -\alpha\quad\text{and}\quad\deg Q_\alpha^\beta = \beta+\alpha,\label{grPQ}\ee 
with respect to the gradation of $\C[x_\gamma]_{\gamma\in \Delta^+}$ by $\g$-weights in which $\deg x_\alpha = \alpha$.
This homomorphism $\bar\rho$ may be understood in two (equivalent) ways. 

For the geometrical perspective, let $G$ denote the connected, simply-connected Lie group associated to $\g$, $N_+\subset G$ the unipotent subgroup with Lie algebra $\n$, and $B_-$ the Borel subgroup with Lie algebra $\mf b_-:= \n_- \oplus \h$. The \emph{flag manifold} associated to these data is the homogeneous space $B_-\big\backslash G$. There is an 
open, dense subset  $U\subset B_-\big\backslash G$ called the \emph{big cell}, defined by $U:= \{ [B_-]n\in B_-\big\backslash G: n\in N_+ \}$.
The big cell $U$ is isomorphic to $N_+$. 
The right action of the group $G$ on $B_-\big\backslash G$ gives rise to a left action of $G$ on functions on $B_-\big\backslash G$ and hence, infinitesimally, an embedding of $\g$ into the Lie algebra $\Vect(B_-\big\backslash G)$ of vector fields on $B_-\big\backslash G$.
\footnote{To $X\in \g$ is associated the vector field $\xi_X$ which sends the function $f:[B_-]g\mapsto f([B_-]g)$ to the function $(\xi_Xf): [B_-]g \mapsto \left.\frac{\del}{\del \eps} f([B_- ]g e^{\eps X})\right|_{\eps=0}$.  Then given $X,Y\in \g$, $(\xi_X(\xi_Yf))(g) = \left.\frac{\del}{\del \eps} (\xi_Yf)([B_- ]g e^{\eps X})\right|_{\eps=0} = \left.\frac{\del}{\del \eps}  \frac{\del}{\del \eta}f([B_- ]g e^{\eps X}e^{\eta Y})\right|_{\eps=0,\eta =0}$. From this one verifies that $\xi_X(\xi_Yf)-\xi_Y(\xi_Xf) = \xi_{[X,Y]}f$.} 
Since the big cell is open, this in turn gives rise to embeddings $\g \hookrightarrow \Vect(U)$ and, hence, $\g\hookrightarrow \Vect(N_+)$. If we let $(x_\alpha)_{\alpha\in \Delta^+}$ be a set of homogeneous coordinates on $N_+$, and think of $\Vect(N_+)\subset \Weyl(\g)$, then this embedding is $\bar\rho$.

For a more representation-theoretic viewpoint, one starts with the contragredient Verma module $M_0^*$ over $\g$. In \S\ref{sec:verm} this was defined as the contragredient dual of the Verma module $M_0$, but it may also be regarded as a \emph{coinduced module}:
\be M_0^* 
\cong_\g \Coind_{U(\mf b_-)}^{U(\g)} \C_0 
:= \Hom^\textup{res}_{U(\mf b_-)}(U(\g), \C_0). \label{M0stardef}\ee
Here $U(\g)$ is considered as a left $U(\mf b_-)$-module, and $\C_0$ is the left $U(\mf b_-)$-module on which $\mf b_-$ acts as zero. So $\Hom_{U(\mf b_-)}(U(\g), \C_0)$ is the space of maps $\eta\in  U(\g)^*$ such that $\eta( b x) = b\eta(x)= 0$
for all $b\in \mf b_-$, $x\in U(\g)$. By the PBW theorem, $U(\g) \cong_{\C} U(\mf b_-) \otimes U(\n)$ as vector spaces, and in this way we identify $U(\g)^* \cong_\C U(\n)^* \otimes U(\mf b_-)^* $. The ``$\textup{res}$'' indicates we consider only those elements of $U(\g)^*$ that belong to $U(\n)^\resd\otimes U(\mf b_-)^*$.
Note that if $x_-\otimes x_+\in U(\mf b_-) \otimes U(\n)$ then $\eta(x_-x_+) = \eps(x_-) \eta(x_+)$, where $\eps : U(\mf b_-) \to \C$ is the counit. In this way $M_0^*\cong_\C U(\n)^\vee$. 
The left $U(\g)$-module structure on $M_0^*$ is the \emph{coinduced} one, which is to say that
\be (g\on \eta) (x)  = \eta( xg) \label{coindact}\ee
for all $g,x\in U(\g)$ and $\eta\in M_0^*$.\footnote{To see the isomorphism \eqref{M0stardef}, note that an element $\eta\in M_0^*$ is in particular a map $\eta:U(\g)\to \C$ such that $\eta(gX) = 0$ for all $X\in \mf b_+ := \h \oplus \n$. There is a bijection $\eta\mapsto\eta\circ\iota$ from $M_0^*$ to $\Hom^\textup{res}_{U(\mf b_-)}(U(\g), \C_0)$ where the action of $U(\mf b_-)$ on $U(\g)$ is from the left,
for  if $\eta(gX)=0$ for all $X\in \mf b_+$ then $(\eta\circ\iota)(Xg) = \eta(\iota(g)\iota(X))=0$ for all $X\in \mf b_-$. Next, the contragredient action of a $Y\in \g$ on $\eta\in M_0^*$ is $(Y\on \eta)(g) = \eta(\iota(Y)g)$. This defines an action on elements $(\eta\circ\iota)\in \Hom^\textup{res}_{U(\mf b_-)}(U(\g), \C_0)$ given by  $(Y\on(\eta\circ\iota))(g):= (Y\on\eta)(\iota(g)) = \eta(\iota(Y)\iota(g)) = \eta(\iota(gY)) = (\eta\circ\iota)(gY)$, i.e. multiplication of the argument on the right by $Y$, which agrees with the coinduced module structure of \eqref{coindact}.} Now, $M_0^*$ also has the structure of a commutative algebra, the product $\cdot$ coming from the co-commutative co-algebra structure on $U(\g)$,
\be (\eta \cdot \eta')(x) := \cdot(\eta \otimes \eta')(\Delta x)\ee 
(where on the right-hand side, $\cdot$ is multiplication in $\C$). Since elements $X\in\g\subset U(\g)$ are algebra-like, i.e. $\Delta X= X\otimes 1 + 1\otimes X$, we have that the action of the Lie algebra $\g$ on $M_0^*$ is by derivations with respect to this product, i.e. $X\on (\eta\cdot\eta') = (X\on \eta)\cdot\eta' + \eta\cdot(X\on \eta')$. So we have a homomorphism of Lie algebras $\g\to \Der(M_0^*)$. If we now take $(x_\alpha)_{\alpha\in \Delta^+}$ to be a set of homogeneous (i.e. $H.x_\alpha = \alpha(H)x_\alpha$) generators of $M_0^*$ then we have an identification $M_0^* \cong \C[x_\alpha]_{\alpha\in \Delta^+}$ as commutative algebras;
and $\bar\rho$ is the map specifying the action of $\g$ on the latter by derivations.

(The two perspectives above are identified by noting that $M_0^* \cong \C[N_+]$, the algebra of regular  functions on $N_+$, and $\Der(M_0^*)\cong \Vect(N_+)$ is the Lie algebra of derivations on this algebra.) 

Further details on realizations of Lie algebras by differential operators can be found in e.g. \cite{Draisma} and references therein, in particular \cite{Blattner}.

\subsection{Equivariance, and equivariant coordinates on $N_+$.}
It is always possible to pick the Cartan decomposition \eqref{cartandecomp} in a manner compatible with the automorphism $\sigma:\g \to \g$, i.e. we can assume that  
\be\sigma(\n) = \n,\quad \sigma(\h) = \h,\quad \sigma(\n_-) = \n_-.\label{sigmacartan}\ee
Then in fact \cite[\S8.6]{KacBook}
\be \sigma(E_{\alpha}) = \tau_\alpha E_{\sigma(\alpha)},\qquad \sigma(H_i) = H_{\sigma(i)},\qquad \sigma(F_{\alpha}) = \tau_{\alpha}^{-1} F_{\sigma(\alpha)},\label{sigmaE}\ee
 where, by a slight overloading of notation, $\sigma:\Delta^+\to\Delta^+$ is a symmetry of the root system, coming in turn from a symmetry $\sigma:I\to I$ of the Dynkin diagram, and where the $\tau_\alpha$, $\alpha\in \Delta^+$, are certain elements of $\Gamma$.

Given any complex vector space $A$ equipped with an action of $\Gamma$, we introduce an induced action of $\Gamma$ on maps $\eta : A \to \C$ as\footnote{Note that the action of $\Gamma$ on $\alpha \in \Delta^+$ defined through \eqref{sigmaE} agrees with the action \eqref{lsigmadef} of $\Gamma$ on $\alpha$ viewed as an element of $\h^{\ast}$. Indeed, applying $\sigma$ to the relation $[\sigma^{-1}(H), E_{\alpha}] = \alpha(\sigma^{-1}(H)) E_{\alpha}$ we find $[H, E_{\sigma(\alpha)}] = (L_{\sigma} \alpha)(H) E_{\sigma(\alpha)}$, from which it follows that $L_{\sigma} \alpha = \sigma(\alpha)$.}
\be \lsigma\eta := \eta \circ \sigma^{-1}.\label{lsigmadef}\ee 
This defines in particular a map $\lsigma:M_0^*\to M_0^*$,
\footnote{Indeed, if $\eta\in M^*_0$ and $b\in U(\mf b_-)$ then $\eta(bx) = 0$ so $(\lsigma\eta)(bx) = \eta(\sigma^{-1}(bx))= \eta(\sigma^{-1}(b)\sigma^{-1}(x)) = 0$ which says that  $\lsigma\eta\in M^*_{0}$. But note that for the general contragredient Verma module of $\g$, the map is $M_{\lambda}^*\to M^*_{\lsigma(\lambda)}$.}  
and the homomorphism $\bar\rho$ of \eqref{barrhodef} is equivariant with respect to $\sigma$ in the sense that 
\be \bar\rho(\sigma(X))\circ \lsigma = \lsigma \circ \bar\rho(X);  \label{sigeq}\ee
for indeed, the following diagram commutes
\be\begin{tikzpicture}    
\matrix (m) [matrix of math nodes, row sep=3em,    
column sep=4em, text height=2ex, text depth=1ex]    
{     
\g \otimes M^*_0 & M^*_0  \\    
\g \otimes M^*_0 & M^*_0  \\    
};    
\path[->,font=\scriptsize,shorten <= 2mm,shorten >= 2mm]    
(m-1-1) edge node [above] {} (m-1-2)    
(m-2-1) edge node [above] {} (m-2-2);    
\path[->,shorten <= 2mm,shorten >= 2mm]    
(m-1-1) edge node [left] {$\sigma\otimes \lsigma$} (m-2-1)    
(m-1-2) edge node [right] {$\lsigma$} (m-2-2);    
\end{tikzpicture}\nn\ee    
because, on the one hand $(X,\eta) \mapsto (X\on \eta) \mapsto \lsigma(X\on \eta)$ where $(\lsigma(X\on \eta))(x) = (X\on \eta)(\sigma^{-1}x) = \eta((\sigma^{-1} x)X)$, while on the other hand $(X,\eta) \mapsto (\sigma X,\lsigma\eta) \mapsto (\sigma X)\on (\lsigma \eta)$ where $((\sigma X)\on(\lsigma \eta))(x) = (\lsigma \eta)(x \sigma X) = \eta(\sigma^{-1}(x\sigma X)) = \eta((\sigma^{-1}x) X)$.

\begin{exmp}\label{sl3ex}
Suppose $\g=\mf{sl}_3$. We use the notation $E_{12} = E_{\alpha_1}$, $E_{13}=E_{\alpha_1+\alpha_2}$, $E_{23} = E_{\alpha_2}$ and similarly for $F_{12}$, etc. In the defining representation, elements of $N_+$ are unipotent upper-triangular $3\times 3$ matrices. For each value of a parameter $\gamma$ there is a system of homogeneous coordinate functions $\{x_{12},x_{13},x_{23}\}$ on $N_+$ given in terms of the matrix elements in the defining representation as follows:
\begin{equation} \label{sl3 point}
 \bmx 1 & x_{12} & x_{13} + \gamma x_{12} x_{23}\\ 0 & 1 & x_{23}\\ 0 & 0 & 1 \emx.
\end{equation}
In particular, the values $\gamma=0$, $\gamma=\half$ and $\gamma=1$ correspond respectively to the parameterizations
\be e^{x_{23} E_{23}} e^{x_{13} E_{13}} e^{x_{12} E_{12}}, \quad
    e^{x_{12} E_{12} + x_{13} E_{13} + x_{23} E_{23}},\quad
    e^{x_{12} E_{12}} e^{x_{13} E_{13}} e^{x_{23} E_{23}} \ee
of $N_+$.
One finds by direct computation that the explicit form of the homomorphism $\bar\rho$ of \eqref{barrhodef} is
\begin{align*} \bar\rho(-E_{12}) &= \del_{12} + (1-\gamma) x_{23} \del_{13} \\
              \bar\rho(-E_{13}) &= \del_{13} \\
              \bar\rho(-E_{23}) &= \del_{23} - \gamma x_{12} \del_{13} \\
\bar\rho(-F_{12}) & = - x_{12}^2 \del_{12} + (x_{13}  + \gamma  x_{12} x_{23} )  \del_{23} 
              + (-\gamma x_{12}   x_{13} - \gamma(\gamma-1)     x_{12}^2  x_{23} )  \del_{13}\\
\bar\rho(-F_{13}) & = (-x_{12} x_{13} + (1 - \gamma) x_{12}^2 x_{23})  \del_{12} 
              + (-x_{13}^2 + (\gamma -1 ) \gamma  x_{12}^2  x_{23}^2)  \del_{13} 
              + (-x_{13}  x_{23} - \gamma x_{12}  x_{23}^2)  \del_{23}\\
\bar\rho(-F_{23}) & = ( -x_{13} + (1 - \gamma)  x_{12}  x_{23})  \del_{12} 
    + ((\gamma-1)  x_{13}  x_{23} + (\gamma-1)  \gamma  x_{12}  x_{23}^2 )  \del_{13} 
     - x_{23}^2  \del_{23}. \end{align*}
Now let $\sigma$ be the (involutive) diagram automorphism, i.e. $\sigma E_{12} = E_{23}$, $\sigma E_{23} = E_{12}$, $\sigma E_{13} = -E_{13}$. If we write, for brevity,
\be\nn\tilde x_{12}:= \lsigma x_{12}\quad\text{and}\quad \tilde\del_{12} := \left.\frac{\del}{\del \tilde x_{12}}\right|_{\tilde x_{13},\tilde x_{23}}\equiv \lsigma\circ\del_{12}\circ\lsigma^{-1}\quad\text{etc.},\ee
then
\begin{subequations}\label{xsigbad}
\be \tilde x_{12} = x_{23}, \quad \tilde x_{23} = x_{12}, \quad \tilde x_{13} = -x_{13} + (1-2\gamma) x_{12} x_{23} \ee
and hence, by the chain rule,
\be \tilde \del_{12} = \del_{23} + (1-2\gamma) x_{12} \del_{13},\quad
    \tilde \del_{23} = \del_{12} + (1-2\gamma) x_{23} \del_{13},\quad
    \tilde \del_{13} = -\del_{13}. \ee 
\end{subequations}
One can then check the equivariance of $\bar\rho$ explicitly: for example
\begin{align} \bar\rho(-\sigma E_{12}) &=  \bar\rho(-E_{23}) = \del_{23} -\gamma x_{12}\del_{13} \nn\\ 
&=\tilde \del_{12} + (1-\gamma) \tilde x_{23} \tilde\del_{13}= \lsigma \circ \bar\rho(-E_{12}) \circ \lsigma^{-1} \nn\end{align}
as required.
\end{exmp}
While the equivariance property \eqref{sigeq} of $\bar\rho$ holds independently of any choice of homogeneous coordinates on $N_+$, it is very natural to make the choice $x_\alpha:N_+\to \C; n\mapsto x_\alpha(n)$ given by the parameterization
\be n= \exp\left(\sum_{\alpha\in\Delta^+} x_\alpha(n) E_\alpha\right).\label{gc}\ee 
(In the example above this is the choice $\gamma=\half$).
Observe that these coordinates $(x_\alpha)_{\alpha\in\Delta^+}$ have the property that they themselves, and hence also their Weyl conjugates, behave equivariantly under $\sigma$. That is,
\begin{subequations}\label{xsigdef}
\be \lsigma(x_{\alpha}) = \tau_{\alpha}^{-1} x_{\sigma(\alpha)},\ee
in view of \eqref{sigmaE}, and hence 
\be \left.\frac{\del}{\del(\lsigma x_{\alpha})}\right|_{\lsigma x_\beta: \beta\neq \alpha}
 = \tau_{\alpha} \del_{\sigma(\alpha)}.\ee
\end{subequations}
(Homogeneous coordinates are not equivariant in this sense in general, as \eqref{xsigbad} shows.)

From the representation-theoretic perspective, these equivariant generators $x_{\alpha}$ are constructed as follows. First, suppose $\n^\perp$ is some choice of complementary subspace to $\n$ in the vector space $U(\n)$. Let $\pi$ be the corresponding projection $U(\n)\twoheadrightarrow \n$. Then $(E_\alpha^*\circ \pi)_{\alpha\in\Delta^+}$ are a set of homogeneous generators of $M_0^*$, where $(E_\alpha^*)_{\alpha\in\Delta^+}$ denotes the dual basis to the basis $(E_\alpha)_{\alpha\in\Delta^+}$ of $\n$. These generators are equivariant if and only if $\n^\perp$ is stable under the action of $\sigma$, i.e. if and only if $\sigma\circ \pi = \pi \circ \sigma$. Such a stable complement is defined by the usual vector space isomorphism $U(\g)\cong_\C S(\g)$.

\subsection{Wakimoto construction}\label{wc}
The \emph{Heisenberg Lie algebra} $\Heis(\g)$ is by definition the Lie algebra with generators $a_{\alpha}[n]$, $a^*_{\alpha}[n]$, $\alpha\in \Delta^+$, $n\in \Z$, and central generator $\mathbf 1$, obeying the relations
\be [a_\alpha[n],a_\beta[m]] = 0,\quad 
[a_\alpha[n],a^*_\beta[m]] = \delta_{\alpha\beta} \delta_{n,-m} \mathbf 1,\quad 
[a^*_\alpha[n],a^*_\beta[m]] = 0,\qquad \alpha,\beta\in\Delta^+, n,m\in \Z.\label{HLdef} \ee
Define $\Mh$ to be the induced representation of $\Heis(\g)$ generated by a vector $\wac$ obeying the conditions $\mathbf 1 \wac = \wac$,
\be a_\alpha[m]\wac = 0,\, m\in \Z_{\geq 0},\qquad a^*_\alpha[m]\wac=0,\,m\in\Z_{\geq 1}. \label{Mdef}\ee
for all $\alpha\in \Delta^+$. 

Let $b_i[n]:=H_{\alpha_i}\otimes t^n$, $i\in I$, $n\in \Z$, be a basis of a copy of the commutative Lie algebra $\h\otimes\C((t))$, and let $\pi_0\simeq  \C[ b_i[n]]_{i\in I;\,n\leq -1}$ be the induced representation of $\h\otimes\C((t))$ in which $b_i[n]$ acts as $0$ for all $i\in I$ and all $n\in \Z_{\geq 0}$.

Now define 
\be\WW_0:= \Mh\otimes \pi_0,\label{WW0def}\ee 
which is an induced representation of $\Heis(\g)\oplus\h\otimes\C((t))$. There is a $\Z$-grading on $\WW_0$ defined by $\deg\wac=0$ and $\deg a_\alpha[n] = \deg a^*_\alpha[n] = \deg b_i[n]= n$.

Recall, for example from \cite{KacVA,FB}, that a \emph{vertex algebra} is a vector space $V$ over $\C$ with a distinguished vector $|0\rangle \in V$ called the \emph{vacuum} and equipped with a linear map, referred to as the \emph{state-field correspondence} or \emph{vertex operator map},
\begin{equation} \label{Y map}
\begin{split}
Y(\cdot, x) : V &\to \text{Hom}(V, V((x)) ) \subset \text{End}\, V [[x, x^{-1}]],\\
A &\mapsto Y(A, x) = \sum_{n \in \mathbb{Z}} A_{(n)} x^{-n-1}, \qquad A_{(n)} \in \text{End}\, V,
\end{split}
\end{equation}
obeying certain axioms.
In the present paper we choose to avoid a detailed discussion of vertex algebras, reserving these aspects for a companion paper \cite{VY}. In the remainder of this section we merely summarize the results we require.
(It should be emphasized, however, that
vertex algebras play a central role in the proof given in  \cite{VY} of Theorem \ref{thm:coinv} below, which in turn is crucial in the construction of eigenvectors of the cyclotomic Gaudin Hamiltonians.) 

First, both $\Vcrit$ and $\WW_0$ have natural vertex algebra structures. In $\Vcrit$ the vacuum state is taken to be the highest weight vector $\vac$ and the state-field correspondence is defined for states of the form $A[-1] \vac$, $A \in \g$ as
\begin{equation*}
Y(A[-1] \vac, x) = \sum_{n \in \mathbb{Z}} A[n] x^{-n-1},
\end{equation*}
so that $(A[-1] \vac)_{(n)} = A[n]$. 
In $\WW_0$ the vacuum vector is $\wac$ and one defines 
\begin{subequations}\label{WYdef}
\be Y(a_\alpha[-1]\wac,x) := 
\sum_{n\in \Z} a_\alpha[n] x^{-n-1},\quad
  Y(a^*_\alpha[0]\wac,x) := 
\sum_{n\in \Z} a^*_\alpha[n] x^{-n}\ee
for all $\alpha \in \Delta^+$, and
\be Y(b_i[-1]\wac,x) := 
\sum_{n\in \Z} b_i[n] x^{-n-1} \ee 
\end{subequations}
for all $i\in I$.
By means of the \emph{reconstruction theorem} -- see e.g. \cite[\S2.3.11 and \S4.4.1]{FB} -- these assignments, together with the specification of the \emph{translation operator} $T$, suffice to define the vertex algebra map $Y$ on the whole of $\Vcrit$ and on the whole of $\WW_0$. 

Next, there is a notion of a \emph{homomorphism} between vertex algebras, and, in particular, of an \emph{automorphism} of a vertex algebra. The map $A[-1]\vac \xrightarrow\sigma (\sigma A)[-1] \vac$ extends to a unique automorphism of $\Vcrit$ as a vertex algebra. There is a unique automorphism (which we also call $\sigma$) of the vertex algebra $\WW_0$ defined by
\begin{subequations} \label{sigmaWdef}
\be \sigma a^*_\alpha[n] := \tau_{\alpha}^{-1} a^*_{\sigma(\alpha)}[n], \qquad
    \sigma a_\alpha[n]   := \tau_{\alpha} a_{\sigma(\alpha)}[n],\ee
cf. \eqref{xsigdef}, and
\be \sigma b_i[n] := b_{\sigma(i)}[n]. \ee 
\end{subequations}

Given any polynomial $p(x)\in \C[x_\alpha]_{\alpha\in\Delta^+}$, denote by  $p(a^*[0])$ the polynomial in $\C[a^*_\alpha[0]]_{\alpha\in\Delta^+}$ obtained by the replacement $x_\alpha\mapsto a^*_\alpha[0]$. 
Let $P_{\alpha_i}^\beta$ and $Q_{\alpha_i}^\beta$ be the polynomials appearing in \eqref{barrhodef}.
It was shown by B.~Feigin and E.~Frenkel, \cite{FF90}, following \cite{Wakimoto}, that there is an injective homomorphism of vertex algebras $\rho:\Vcrit\to\WW_0$ defined by 
\begin{subequations} \label{rhodef}
\begin{align}
\label{rhodefE} \rho(E_{\alpha_i}[-1]\vac) &= \sum_{\beta\in \Delta^+} P_{\alpha_i}^\beta(a^*[0]) a_\beta[-1]\wac \\
\label{rhodefH} \rho(H_{\alpha_i}[-1]\vac) &=   -\sum_{\beta\in \Delta^+} \beta(H_{\alpha_i}) a^*_\beta[0] a_\beta[-1]\wac + b_i[-1]\wac\\
\label{rhodefF} \rho(F_{\alpha_i}[-1]\vac) &= \sum_{\beta\in \Delta^+} Q_{\alpha_i}^\beta(a^*[0]) a_\beta[-1]\wac + c_i a^*_{\alpha_i}[-1]\wac - a^*_{\alpha_i}[0] b_i[-1]\wac. 
\end{align}
\end{subequations}
for certain constants $c_i$, $i\in I$. This homomorphism $\rho$ has the property that $\mf Z(\gh)$, cf. \S\ref{singsec}, is mapped into $\pi_0$.

Moreover, it was shown by M.~Szczesny in \cite{Szc}  that if the polynomials $P_{\alpha_i}^\beta$ and $Q_{\alpha_i}^\beta$ in \eqref{barrhodef} are defined with respect to a set of generators  $\{x_\alpha, \del_\alpha\}_{\alpha\in \Delta^+}$ obeying \eqref{xsigdef}, then this homomorphism $\rho$ is equivariant, i.e.
\be \rho(\sigma(v)) = \sigma \rho(v)\label{sigmaeq}\ee
for all $v\in \Vcrit$, where the action of $\sigma$ in $\WW_0$ is as given in \eqref{sigmaWdef} .

In Theorem 2 of \cite{Szc} the choice of homogeneous coordinates on $N_+$ is what we call the equivariant one, \eqref{gc}. Equivariance of $\rho$ on $E_{\alpha_i}[-1]\vac$ and $H_{\alpha_i}[-1]\vac$ is then immediate; what has to be checked is that $c_i=c_{\sigma(i)}$, which ensures equivariance for $F_{\alpha_i}[-1]\vac$. In \cite{Szc} $\sigma$ was taken to be a diagram automorphism (i.e. $\tau_{\alpha_i}=1$ for each simple root $\alpha_i$) but by inspection one finds that the proof goes through in general. 

\begin{rem}\label{coordrem}
Changes of homogeneous coordinates on $N_+$ naturally induce automorphisms of $\WW_0$. Thus, once \eqref{sigmaeq} is established for this system of homogeneous coordinates, it follows for all others (but of course the definition of $\sigma$ on $\WW_0$ in the new coordinates will be more involved, since one must take \eqref{sigmaWdef} and conjugate it by the change-of-coordinate automorphism, which need not  be linear).
\end{rem}

A \emph{smooth} module over $\Heis(\g)\oplus \h\otimes\C((t))$ is any module $M$ such that for all $v\in M$ there is an $n\in \Z_{\geq 0}$ such that for all $m\geq n$, $0=a_\alpha[m]v=a^*_{\alpha}[m]v = b_i[m]v$ for all $\alpha\in \Delta^+$ and all $i\in I$. It follows from the existence of the homomorphism $\rho$ that
\be\text{
every smooth $\Heis(\g)\oplus \h\otimes\C((t))$-module has the structure of a $\gh$-module}\label{smp}\ee   
in which $K$ acts as $-h^\vee$.

\begin{rem}\label{smprem} Let us recall in outline the reason for this; for details see \cite{FB}. To every vertex algebra $V$ is associated a Lie algebra $U(V)$, the ``big'' Lie algebra of $V$, spanned by all formal modes of all vertex operators in $V$. There is a homomorphism of Lie algebras $\gh\to U(\Vcrit)$ defined by $K\mapsto (-h^\vee \vac)_{[-1]}$ and $A[n]\mapsto (A[-1]\vac)_{[n]}$ for $A\in \g$, $n\in \Z$. The homomorphism $\rho$ induces a homomorphism $U(\Vcrit)\to U(\WW_0)$ between the big Lie algebras. It follows that every $U(\WW_0)$-module pulls back to a $\gh$-module in which $K$ acts as $-h^\vee$. Finally, every smooth module over $\Heis(\g)\oplus \h\otimes\C((t))$ has a canonical $U(\WW_0)$-module structure.\end{rem}

\subsection{The right action of $\n$ on $M_0^*$, and the generators $G_i$.}\label{Gsec}
The coinduced left $U(\g)$-module $M_0^*$ of \S\ref{fds} is also a right $U(\n)$-module, the action coming from the left action of $U(\n)$ on itself. That is, given $\eta\in M_0^*$ and $n\in U(\n)$, the right action is defined by $(\eta . n)(x_-x_+) := \eps(x_-)\eta(nx_+)$, where $x_-\otimes x_+\in U(\mf b_-) \otimes U(\n)$ and we extend by linearity to the whole of $U(\g) \cong_\C U(\mf b_-) \otimes U(\n)$. (Here $\eps:U(\mf b_-)\to \C$ is the counit.) This action commutes with the left action of $U(\g)$. By the same argument as in \S\ref{fds}, any $X\in \n$ then defines a derivation of $M_0^*$ when the latter is viewed as a commutative algebra. However, because the action of $U(\n)$ on $M_0^*$ is from the right, the map $\n\to\Der(M_0^*)$ so defined is an \emph{anti}-homomorphism of Lie algebras. By introducing an overall sign, it becomes a homomorphism. After making the identification $M_0^*\cong_\C \C[x_\alpha]_{\alpha\in\Delta^+}$,  the generators $E_{\alpha_i}$ of $\n$ are mapped under this homomorphism to differential operators $G_i$ of the form
\be G_i = \sum_{\beta\in \Delta^+} R_{i}^\beta(x) \del_\beta\ee
for certain homogeneous polynomials in the $x_\alpha$, $\alpha\in\Delta^+$, with $\deg R_i^\beta = \beta-\alpha_i$. 
One can check that 
\be\label{PRrel} -1 =  P_{\alpha_i}^{\alpha_i}(x) = -R_i^{\alpha_i}(x) \ee
where the $P_\alpha^\beta$ are those of \eqref{barrhodef}.

Let $\mathbb N$ be the vacuum Verma module of $\n\otimes \C((t))$:
\be \mathbb N := \Ind_{\n\otimes \C[[t]]}^{\n\otimes \C((t))} \C\vac = U(\n\otimes \C((t))) \otimes_{U(\n\otimes \C[[t]])} \C\vac \ee
where $\vac$ is a nonzero vector such that $\n[t]\vac = 0$. 
With the natural vertex algebra structure on $\mathbb N$ (in which $Y(A[-1]\vac,x) = \sum_{n\in \Z} A[n] x^{-n-1}$ for all $A\in \n$), there is an injective homomorphism $\mathbb N\mapsto \Mh\subset \WW_0$ of vertex algebras defined by
\begin{equation}
E_{\alpha_i}[-1]\vac \mapsto G_i[-1] \wac := \sum_{\beta\in \Delta^+} R_{i}^\beta(a^*[0]) a_\beta[-1] \wac.\label{NWhom}
\end{equation}
Provided the polynomials $R_i^\beta$ are defined with respect to a set of generators  $\{x_\alpha, \del_\alpha\}_{\alpha\in \Delta^+}$ obeying \eqref{xsigdef}, this homomorphism is $\sigma$-equivariant.

The existence of this homomorphism means that, in addition to \eqref{smp}, every smooth module over $\Heis(\g)\oplus \h\otimes\C((t))$ is endowed also with the structure of a module over this ``right'' copy, call it $$\nG\otimes\C((t)),$$ of $\n\otimes\C((t))$.    
The elements $G_i[-1]\wac\in \Mh$ play an important role in what follows.

\section{Cyclotomic coinvariants of $\Heis(\g)\oplus \h\otimes\C((t))$-modules}\label{sec:Hdef}
The next step is, roughly speaking, to repeat much of \S\ref{ZTGsec} but with the role of $\gh$ replaced by the Lie algebra $\Heis(\g)\oplus \h\otimes\C((t))$ introduced in \S\ref{wc}. 

To the points $\bm z = \{z_1,\dots,z_N\}$, $z_i\in \Cx$, $1\leq i\leq N$, we add additional non-zero marked points $\bm w:= \{w_1,\dots,w_m\}$, $w_j\in \Cx$, $1\leq j\leq m$.
The points $z_i$ will continue to correspond to the sites of the Gaudin spin chain, while the points $w_i$ will play the role of the Bethe roots. For convenience, let us write $\bm x = \{x_1,\dots, x_{p}\}$ with $(x_1,\dots,x_{p}) = (z_1,\dots,z_N,w_1,\dots,w_m)$ and $p=N+m$. We require that 
\be \Gamma x_i\cap  \Gamma x_j = \emptyset \quad\text{for all $1\leq i\neq j\leq p$}. \ee
In addition, it will also be necessary to introduce a certain carefully chosen module assigned to the point $0$. This is one of the new features of the cyclotomic construction. 

Let $\n_\C$ (resp. $\n^*_\C$) denote the vector space $\n$ (resp. $\n^*$) endowed with the structure of a commutative Lie algebra.
On the commutative Lie algebra  $\n_\C\oplus\n^*_\C$ there is a non-degenerate bilinear skew-symmetric form $\langle\cdot,\cdot\rangle$ defined by
\be \langle X, Y \rangle = Y|_{\n^*}\left(X|_{\n}\right) - X|_{\n^*}\left(Y|_{\n}\right),  \ee
and an action (trivially by automorphisms) of the group $\Gamma$ given by 
\be \omega. X := \sigma(X|_{\n}) \oplus \lsigma(X|_{\n^*}).\label{Gn}\ee
Given any $x\in \C$, observe that the Heisenberg Lie algebra $\Heis(\g)$ of \S\ref{wc} is isomorphic to the central extension of the commutative Lie algebra $(\n_\C\oplus \n^*_\C)\otimes \C((t-x))$, by a one-dimensional centre $\C\mathbf 1$, defined by the cocycle $\res_{t-x} \langle f , g \rangle$.
The identification of generators is
\be a_\alpha[n] = E_{\alpha}\otimes (t-x)^n, \quad a^*_\alpha[n] = E_\alpha^*\otimes (t-x)^{n-1}\ee 
with $E_{\alpha}^*\in \n^*$, $\alpha\in\Delta^+$ the dual basis of $E_{\alpha}\in \n$, $\alpha\in\Delta^+$.
In this way we associate a copy of $\Heis(\g)$, call it $\Heis_{(i)}$,  to each of the points $x_i$, $i=1,2,\dots,p$.

\subsection{The algebras $\Heis_{p}\oplus \h_p$ and $\Heis_{\bm x}^\Gamma(t)\oplus\h_{\bm x}^{\Gamma}$} 
Let $\Heis_p$ be the Lie algebra obtained by extending the commutative Lie algebra
\be \bigoplus_{i=1}^p (\n_\C\oplus \n^*_\C)\otimes \C((t-x_i)),\ee
by a one-dimensional centre $\C\mathbf 1$, defined by the cocycle
\be \Omega^\Heis(f_1,\dots,f_p;g_1,\dots,g_p) := \sum_{i=1}^p \res_{t-x_i} \langle f_i,g_i \rangle.\label{Hcocycle} \ee
Equivalently, $\Heis_p$ is 
the quotient of $\bigoplus_{i=1}^p\Heis_{(i)}$ by the ideal spanned by $\mathbf 1^{(i)} - \mathbf 1^{(j)}$, $1\leq i\neq j\leq p$.

Concretely, $\Heis_p$ is the Lie algebra generated by $a^*_\alpha[n]^{(i)}$, $a_\alpha[n]^{(i)}$, $1\leq i\leq p$, $\alpha\in\Delta^+$, $n\in \Z$, and central element $\mathbf 1$, with commutation relations
\be 
[a_\alpha[n]^{(i)},a_\beta[m]^{(j)}] = 0,\quad 
[a_\alpha[n]^{(i)},a^*_\beta[m]^{(j)}] = \delta^{ij} \delta_{\alpha\beta} \delta_{n,-m} \mathbf 1,\quad 
[a^*_\alpha[n]^{(i)},a^*_\beta[m]^{(j)}] = 0,\ee
for $\alpha,\beta\in\Delta^+$, $n,m\in \Z$, cf. \eqref{HLdef}.
The identification of generators is
\be  a_\alpha[n]^{(i)} = E_{\alpha}\otimes (t-x_i)^n,\quad a^*_\alpha[n]^{(i)} = E_\alpha^*\otimes (t-x_i)^{n-1}. \label{genid}\ee

Next we define $\Heis_{\bm x}^\Gamma$ to be the commutative Lie algebra
\be \Heis_{\bm x}^\Gamma := 
\left(\n_\C   \otimes \C^\8_{\Gamma\bm x}(t)\right)^{\Gamma,0} \oplus
\left(\n^*_\C \otimes \C^\8_{\Gamma\bm x}(t)\right)^{\Gamma,-1}
\label{Hxdef},\ee
cf. \eqref{AGkdef}. Here the action of the generator $\omega$ of $\Gamma$ on $\n$ is given by $\omega.X=\sigma X$, and its  action on $\n^*$  by $\omega.\eta := \lsigma \eta = \eta\circ \sigma^{-1}$.
There is then an embedding of commutative Lie algebras
\be\iota: \Heis_{\bm x}^\Gamma(t) \longhookrightarrow \bigoplus_{i=1}^p (\n_\C\oplus \n^*_\C)\otimes \C((t-x_i))  \ee
as in \eqref{tin}. By the $\Gamma$-equivariant residue theorem -- see Appendix \ref{sec:tert} --  the restriction of the cocycle \eqref{Hcocycle} to the image of this embedding vanishes, and therefore the embedding lifts to an embedding of Lie algebras 
\be \Heis_{\bm x}^\Gamma \longhookrightarrow \Heis_p.\ee

At the same time we also have commutative Lie algebras
\be \h_p := \bigoplus_{i=1}^p \h\otimes \C((t-x_i)),\quad\text{and}\quad
    \h_{\bm x}^\Gamma := \{ f\in \h\otimes \C^\8_{\Gamma\bm x}(t): \sigma f(t) = f(\omega t) \},\ee
and an embedding 
\be  \h_{\bm x}^\Gamma \longhookrightarrow \h_p.\label{hhi}\ee
Note that in contrast to $\Heis_p$ and $\gh_N$, $\h_p$ is not centrally extended so there is no need to worry about whether this embedding lifts.

\subsection{``Big'' versus ``little'' swapping}\label{sec:functoriality}
At this stage we have the Lie algebra $\Heis_p\oplus \h_p$, which is the direct sum, with central charges identified, of the ``local'' copies $\Heis_{(i)}\oplus \h\otimes\C((t-x_i))$ of the Lie algebra $\Heis(\g)\oplus\h\otimes\C((t))$, and embedded within $\Heis_p\oplus \h_p$ we have the ``global'' Lie algebra $\Heis_{\bm x}^\Gamma\oplus \h_{\bm x}^\Gamma$. 

Suppose now that $M_{(i)}$, $1\leq i\leq p$, are smooth $\Heis(\g)\oplus \h\otimes\C((t))$-modules on which $\mathbf 1$ acts as $1$. Then 
\be M_p:=\bigotimes_{i=1}^p M_{(i)}\nn\ee 
is a module over the Lie algebra $\Heis_{\bm x}^\Gamma\oplus \h_{\bm x}^\Gamma$ via the latter's embedding into $\Heis_p\oplus \h_p$. The $M_{(i)}$ are also $\gh$-modules, as in \eqref{smp}, and hence $M_p$ is a module over $\g_{\bm x}^\Gamma$ via \emph{its} embedding into $\gh_p$. 
One then has two spaces of coinvariants, with respect to these two different ``global'' Lie algebras:
\be M_p \big/ \g_{\bm x}^\Gamma\qquad\text{and}\qquad M_p \big/ \left(\Heis_{\bm x}^\Gamma\oplus \h_{\bm x}^\Gamma\right).\ee 
If the Heisenberg algebra and free-field construction are to be of use in solving the model introduced in \S\ref{ZTGsec}, it is necessary to relate these two spaces.

In the usual case where $\Gamma=\{1\}$ it turns out (for details see \cite[\S14.1.3]{FB})  that there is a well-defined linear map
\be M_p \big/ \g_{\bm x}\longrightarrow M_p \big/\left(\Heis_{\bm x}\oplus \h_{\bm x}\right)\label{untwistedfunctoriality}\ee which sends the class of any $v\in M_p$ in $M_p \big/ \g_{\bm x}$  to its class in $M_p \big/\left(\Heis_{\bm x}\oplus \h_{\bm x}\right)$. That is, the following diagram commutes:\be\begin{tikzpicture}    
\matrix (m) [matrix of math nodes, row sep=3em,    
column sep=4em, text height=2ex, text depth=1ex]    
{     
M_p &   \\    
  M_p \big/\g_{\bm x}  &M_p\big/ \left(\Heis_{\bm x}\oplus \h_{\bm x}\right).   \\    
};    
\path[->,font=\scriptsize,shorten <= 2mm,shorten >= 2mm]    
(m-1-1) edge node [above] {} (m-2-2)  
(m-2-1) edge node [above] {} (m-2-2)    
(m-1-1) edge node [left] {} (m-2-1);    
\end{tikzpicture}\nn\ee
Intuitively speaking, in $M_p \big/ \left(\Heis_{\bm x}\oplus \h_{\bm x}\right)$ one is by definition allowed to ``swap'', cf. \S\ref{sec:twistedswapping}, using rational functions of the form 
\be \frac{ X}{(t- x_i)^n},\quad n\in \Z_{\geq 1},\label{swapfn}\ee 
where $X$ is one of $a_\alpha,a^*_\alpha$, and $b_i$, and what \eqref{untwistedfunctoriality} asserts is that in fact one is also allowed to ``swap'' using functions with $X= E_{\alpha}, F_\alpha$ or $H_{\alpha_i}$.
It should be stressed that this is not a trivial statement: the Lie algebra $\g_{\bm x}$ does not embed into $\Heis_{\bm x}\oplus \h_{\bm x}$ or into its universal envelope, for example.
Call the former ``little'' swapping and the latter ``big'' swapping (cf.  Remark \ref{smprem}).

The cyclotomic case turns out to have subtleties of its own, details of which can be found in a companion paper, \cite{VY}. Here we merely sketch the situation before quoting, from \cite{VY}, the specific result we need. First, the statement \eqref{untwistedfunctoriality} is \emph{not} in general true when $\g_{\bm x}$  and $\Heis_{\bm x}\oplus \h_{\bm x}$ are replaced by their $\Gamma$-equivariant counterparts $\g_{\bm x}^\Gamma$ and $\Heis_{\bm x}^\Gamma\oplus \h_{\bm x}^\Gamma$. To gain insight into why it fails, consider the linear isomorphism
\be \left(M_p \otimes \WW_0\right)\big/ \left(\Heis_{\bm x,u}^\Gamma\oplus \h_{\bm x,u}^\Gamma\right) 
\cong_\C M_p\big/\left(\Heis_{\bm x}^\Gamma\oplus \h_{\bm x}^\Gamma\right)
\ee
where $\WW_0$ is assigned to a point $u\in \C$. Via this isomorphism, given some  $\bm m\otimes X \in M_p\otimes \WW_0$ we obtain an element of $M_p\big/\left(\Heis_{\bm x}^\Gamma\oplus \h_{\bm x}^\Gamma\right)$, call it $F(u)$, which depends rationally on $u$. 

When $\Gamma=\{1\}$, $F(u)$ has poles at most at the points $x_i$, $1\leq i\leq p$ and (one can show that) the statement of swapping using \eqref{swapfn} is nothing but the statement of the residue theorem for the rational function $F(u)/(u-x_i)^n$. 
If $X$ is an ``elementary'' state like $a_\alpha[-1]\wac\in \WW_0$, one recovers the ``little'' swapping that holds in $M_p \big/ \left(\Heis_{\bm x}\oplus \h_{\bm x}\right)$  by definition. But for other states -- and in particular for states like $\rho(F_\alpha[-1]\vac)\in \WW_0$ -- one obtains ``big'' swapping. 

However, when $\Gamma\neq \{1\}$ it can happen that $F(u)$ has, in addition to the poles at the points $\omega^kx_i$, $\omega^k\in \Gamma$, $1\leq i\leq p$, a pole also at $u=0$. This is not unnatural, since the origin is  singled out by being the fixed point of the action of $\Gamma$, but it is nonetheless striking since no module is assigned there. One must include this extra pole in the sum over residues, and then the vanishing of this sum is in general no longer the correct statement of ``big'' swapping.

To cure this problem, we are led to \emph{introduce} a module assigned to the origin, judiciously chosen to eliminate this extra, unwanted, singularity. Thus, instead of  $\left(\Heis_{\bm x}^\Gamma\oplus \h_{\bm x}^\Gamma\right)$ we consider $\left(\Heis_{\bm x,0}^\Gamma\oplus \h_{\bm x,0}^\Gamma\right)$, \ie we allow rational functions with poles at the origin. The Laurent expansion at the origin of an element of $\left(\Heis_{\bm x,0}^\Gamma\oplus \h_{\bm x,0}^\Gamma\right)$ belongs to $\Heis(\g)^{\Gamma} \oplus (\h \otimes \C((t)))^\Gamma$, where $\Heis(\g)^\Gamma$ is the subalgebra of $\Heis(\g)$ given by
\be \Heis(\g)^\Gamma \cong_\C (\n_\C \otimes \C((t))^{\Gamma,0} \oplus (\n^*_\C\otimes \C((t)))^{\Gamma,-1} \oplus \C \mathbf 1.\ee
If we then define $H_{p,0}$ to be the quotient of $\bigoplus_{i=1}^p \Heis_{(i)} \oplus  \Heis(\g)^{\Gamma}$
by the ideal spanned by $\mathbf 1^{(i)} - T\mathbf 1^{(0)}$, $1\leq i\leq p$, then we have an embedding of Lie algebras
\be  \Heis_{\bm x,0}^\Gamma \hookrightarrow H_{p,0}\ee
(using part (2) of the lemma of Appendix A). We can assign to the origin any smooth module $M_0$ over $H(\g)^{\Gamma} \oplus (\h \otimes \C((t)))^\Gamma$ on which $T\mathbf 1^{(0)}$ acts as 1, and form the space of coinvariants
\be (M_p\otimes M_0)\big/\left(\Heis_{\bm x,0}^\Gamma\oplus \h_{\bm x,0}^\Gamma\right).\ee

\begin{thm}[\cite{VY}]\label{thm:coinv}
Suppose that there exists a non-zero vector $m_0\in M_0$ with the property that, in the space of coinvariants $(\WW_0 \otimes M_p \otimes M_0)\big/\left(\Heis_{u,\bm x,0}^\Gamma\oplus \h_{u,\bm x,0}^\Gamma\right)$,
\begin{equation*}
\iota_u \left[\rho(A[-1]\vac) \otimes \bm m \otimes m_0\right]
\end{equation*}
is a Taylor series in $u$, for all $A\in \g$ and $\bm m\in M_p$.
Then there is a well-defined linear map 
\be M_p \big/ \g_{\bm x}^\Gamma\to \left(M_p\otimes M_0 \right) \big/ \left(\Heis_{\bm x,0}^\Gamma\oplus \h_{\bm x,0}^\Gamma\right)\ee 
which sends the class of $v\in M_p$ in $M_p\big/ \g_{\bm x}^\Gamma$  to the class of $v\otimes m_0$ in $\left(M_p\otimes M_0\right) \big/\left(\Heis_{\bm x,0}\oplus \h_{\bm x,0}\right)^\Gamma$.
\qed\end{thm}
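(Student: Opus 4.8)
The map $v\mapsto[v\otimes m_0]$ is manifestly linear from $M_p$ to $(M_p\otimes M_0)/(\Heis_{\bm x,0}^\Gamma\oplus\h_{\bm x,0}^\Gamma)$, so the assertion is precisely that it annihilates $\g_{\bm x}^\Gamma.M_p$; that is, that $[(\xi.\bm m)\otimes m_0]=0$ in the target for every $\xi\in\g_{\bm x}^\Gamma$ and every $\bm m\in M_p$. The plan is to prove this by swapping a single Wakimoto vertex operator off an auxiliary point, in the spirit of the classical functoriality of \cite[\S14.1.3]{FB}. First I would reduce to the case where $\xi=\xi_{A,j,n}(t):=\sum_{k=0}^{T-1}\sigma^kA\,(\omega^{-k}t-x_j)^{-n}$ is one of the ``swap functions'' of \eqref{swap function} (with the point there taken to be an $x_j\in\bm x$), $A\in\g$, $1\le j\le p$, $n\ge1$: since $\C^\8_{\Gamma\bm x}(t)$ has basis $\{(t-\omega^{-k}x_j)^{-n}\}$ and $\g_{\bm x}^\Gamma$ is the image of the $\Gamma$-averaging projector on $\g\otimes\C^\8_{\Gamma\bm x}(t)$, the $\xi_{A,j,n}$ span $\g_{\bm x}^\Gamma$, and $\xi\mapsto[(\xi.\bm m)\otimes m_0]$ is linear.

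Next I would adjoin a further non-zero point $u$, with $\Gamma u$ disjoint from $0$ and from all the orbits $\Gamma x_l$, and assign $\WW_0$ to it. Since $\WW_0$ is induced over $\Heis(\g)\oplus\h\otimes\C((t))$ from the one-dimensional module $\C\wac$, and since, by the $\Gamma$-equivariant residue theorem of Appendix \ref{sec:tert}, the image of $\Heis_{u,\bm x,0}^\Gamma\oplus\h_{u,\bm x,0}^\Gamma$ is complementary to the non-negative modes at $u$, the argument of Lemma \ref{complem} furnishes an isomorphism $(\WW_0\otimes M_p\otimes M_0)/(\Heis_{u,\bm x,0}^\Gamma\oplus\h_{u,\bm x,0}^\Gamma)\cong_\C(M_p\otimes M_0)/(\Heis_{\bm x,0}^\Gamma\oplus\h_{\bm x,0}^\Gamma)$. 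Through it the class $[\rho(A[-1]\vac)\otimes\bm m\otimes m_0]$ corresponds to an element $\Psi(u)$ of the target of our map, depending rationally on $u$ (and on $A,\bm m$), with poles confined to the orbits $\Gamma x_l$ and to $0$, and vanishing as $u\to\infty$.

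The heart of the argument is the computation of the principal parts of $\Psi(u)$, which I would carry out in two ways. On the one hand, $\rho(A[-1]\vac)\in\WW_0$ is a finite sum of monomials, each a polynomial in the grade-zero modes $a^*_\alpha[0]$ times a single grade-$(-1)$ creation mode ($a_\beta[-1]$, $a^*_\beta[-1]$ or $b_i[-1]$); hence finitely many applications of the equivariant ``little'' swapping of \S\ref{sec:functoriality}, using rational functions valued in $\n_\C$, $\n^*_\C$ or $\h$, compute $\Psi(u)$ outright, exhibiting its principal parts at the $\Gamma x_l$ and at $0$ as generating series of, respectively, $\Heis(\g)\oplus\h\otimes\C((t))$-modes on $M_{(l)}$ and $\Heis(\g)^\Gamma\oplus(\h\otimes\C((t)))^\Gamma$-modes on $M_0$. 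On the other hand, the Wakimoto realisation \eqref{rhodef}, which is $\sigma$-equivariant by \eqref{sigmaeq}, makes each $M_{(l)}$ into a $\gh_{(l)}$-module in which $A[m]^{(l)}$ is the $m$-th mode of $\rho(A[-1]\vac)$. Comparing the two computations and applying the $\Gamma$-equivariant Strong Residue Theorem (Lemma \ref{tert}) to $\Psi(u)$ — a rational function vanishing at infinity, paired against $\Gamma$-equivariant test functions that vanish at infinity — should yield a decomposition $(\xi_{A,j,n}.\bm m)\otimes m_0 \equiv c_{A,j,n}\pmod{(\Heis_{\bm x,0}^\Gamma\oplus\h_{\bm x,0}^\Gamma)}$, in which the ``principal'' part is the action of a single global $\Gamma$-equivariant function in $\Heis_{\bm x,0}^\Gamma\oplus\h_{\bm x,0}^\Gamma$ (and here the $\sigma$-equivariance of $\rho$ is essential), hence zero in the target, while the remainder $c_{A,j,n}$ is a correction acting only on the $M_0$-factor and extractible as a coefficient of the Laurent expansion of $\Psi(u)$ about $u=0$.

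Finally, the hypothesis of the theorem states precisely that $\iota_u[\rho(A[-1]\vac)\otimes\bm m\otimes m_0]=\iota_u\Psi(u)$ is a Taylor series in $u$, i.e. that $\Psi(u)$ has no pole at $u=0$; hence every such Laurent coefficient, in particular $c_{A,j,n}$, vanishes, for all $A\in\g$ and $\bm m\in M_p$. Therefore $[(\xi_{A,j,n}.\bm m)\otimes m_0]=0$ in $(M_p\otimes M_0)/(\Heis_{\bm x,0}^\Gamma\oplus\h_{\bm x,0}^\Gamma)$; since the $\xi_{A,j,n}$ span $\g_{\bm x}^\Gamma$ the map descends to $M_p/\g_{\bm x}^\Gamma$, as asserted. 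I expect the main obstacle to be the third step — isolating the correction $c_{A,j,n}$ and verifying that the complementary ``principal'' part really is the action of a single global $\Gamma$-equivariant Heisenberg/Cartan function. This is exactly where the cyclotomic argument departs from \cite[\S14.1.3]{FB}: the ``self-interaction'' terms of the equivariant swapping (cf. Remark \ref{sirem}), together with the normal-ordering in the Wakimoto formulae, conspire to produce a pole of $\Psi(u)$ at the $\Gamma$-fixed point $0$ where no module sat a priori, and it is precisely the control of this pole that dictates the choice of the origin module $M_0$ and of the vector $m_0$, and that makes the $\Gamma$-equivariant residue calculus of Appendix \ref{sec:tert} indispensable.
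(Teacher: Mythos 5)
First, a point of comparison: this paper does not actually prove Theorem \ref{thm:coinv} --- it is imported from the companion paper \cite{VY}, and what appears here is only the heuristic sketch of \S\ref{sec:functoriality}. Your strategy reproduces that sketch faithfully: attach $\WW_0$ to an auxiliary point $u$, regard $\Psi(u)=\big[\rho(A[-1]\vac)\otimes\bm m\otimes m_0\big]$ as a coinvariant-valued rational function of $u$, observe that in the cyclotomic setting it may acquire an extra pole at the $\Gamma$-fixed point $0$, and use the hypothesis together with the $\Gamma$-equivariant residue theorem of Appendix \ref{sec:tert} to remove the contribution at the origin and conclude that $[(\xi.\bm m)\otimes m_0]=0$ for every $\xi\in\g_{\bm x}^\Gamma$. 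At the level of architecture you are aligned with the intended proof.

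The genuine gap is the step you yourself flag as the ``main obstacle'', and it is not a detail: you must show that the residue pairing of $\Psi(u)$ against the equivariant test function defining $\xi_{A,j,n}$ really computes $[(\xi_{A,j,n}.\bm m)\otimes m_0]$, i.e.\ that the Laurent expansion of $\Psi(u)$ along $\Gamma x_l$ is, modulo $\Heis_{\bm x,0}^\Gamma\oplus\h_{\bm x,0}^\Gamma$, the generating series of the $\gh_{(l)}$-action of $A$ on the $l$-th factor, and that the expansion at the origin is governed by the quasi-module action \eqref{YWdef} on $M_0$. For the linear generators of $\WW_0$ this is ``little'' swapping by definition, but $\rho(A[-1]\vac)$ is in general a normally ordered composite state (e.g.\ \eqref{rhodefF} contains $Q_{\alpha_i}^\beta(a^*[0])\,a_\beta[-1]\wac$ and $a^*_{\alpha_i}[0]\,b_i[-1]\wac$), and equivariant swapping of such states generates the intricate self-interaction corrections of Remark \ref{sirem}; the claim that these corrections resum, at the non-fixed points, into exactly the modes of the $\gh$-action is precisely the locality/associativity property of (quasi-)modules over the vertex algebra $\WW_0$ that \cite{VY} establishes, and ``comparing the two computations'' is an assertion, not an argument, until this is proved. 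Relatedly, the rationality of $\Psi(u)$, its vanishing at infinity, and the $\tfrac{1}{T}$-weighted bookkeeping of the residue at the origin (part (2) of Lemma \ref{terttwo}) must be established before the residue theorem can be invoked; these are more routine, but they rest on the same control of swapping for composite states. In short: right strategy, matching the paper's sketch, but the core identification that makes the residue calculus compute the $\g_{\bm x}^\Gamma$-action is missing, and that identification is the actual content of the proof in \cite{VY}.
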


If $\mf a$ is a Lie algebra and $M$ an $\mf a$-module, a linear functional $\tau:M\to \C$ on $M$ is said to be \emph{$\mf a$-invariant} if $\tau( a\on x) =0$ for all $a\in \mf a$ and all $x\in M$. 

\begin{cor}\label{ginvcor}
Suppose $\C m_0$ is as in Theorem \ref{thm:coinv}. If $\tau: M_p\otimes \C m_0\to \C$ is an $\Heis_{\bm x, 0}^\Gamma\oplus \h_{\bm x, 0}^\Gamma$-invariant linear functional, then the linear functional $\tau(\cdot \otimes m_0)$ on $M_p$ is  $\g_{\bm x}^\Gamma$-invariant. 
\end{cor}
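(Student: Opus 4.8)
The plan is to read the corollary off from Theorem~\ref{thm:coinv} using the elementary dictionary between invariant linear functionals and linear functionals on spaces of coinvariants. Recall that if $\mf a$ is any Lie algebra and $M$ any $\mf a$-module, then a linear functional $M\to\C$ is $\mf a$-invariant precisely when it annihilates the subspace $\mf a\cdot M$, i.e.\ precisely when it factors through the quotient map $M\twoheadrightarrow M/\mf a$ onto the space of coinvariants. So the hypothesis on $\tau$ amounts to the assertion that $\tau$ (regarded as a functional on $M_p\otimes M_0$, of which only the values on $M_p\otimes\C m_0$ enter) descends to a well-defined linear functional $\bar\tau$ on $\bigl(M_p\otimes M_0\bigr)\big/\bigl(\Heis_{\bm x,0}^\Gamma\oplus\h_{\bm x,0}^\Gamma\bigr)$.

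Next I would invoke Theorem~\ref{thm:coinv}, whose hypothesis --- that $\iota_u[\rho(A[-1]\vac)\otimes\bm m\otimes m_0]$ is a Taylor series in $u$ for all $A\in\g$ and $\bm m\in M_p$ --- is exactly the defining property assumed of the vector $m_0$ here. It furnishes a well-defined linear map
\[
\Phi:\ M_p\big/\g_{\bm x}^\Gamma\ \longrightarrow\ \bigl(M_p\otimes M_0\bigr)\big/\bigl(\Heis_{\bm x,0}^\Gamma\oplus\h_{\bm x,0}^\Gamma\bigr),\qquad [v]\longmapsto[v\otimes m_0].
\]
Composing, $\bar\tau\circ\Phi$ is a linear functional on $M_p/\g_{\bm x}^\Gamma$; precomposing with the canonical projection $M_p\twoheadrightarrow M_p/\g_{\bm x}^\Gamma$ and unwinding gives, for $v\in M_p$,
\[
(\bar\tau\circ\Phi)([v])=\bar\tau\bigl([v\otimes m_0]\bigr)=\tau(v\otimes m_0).
\]
Thus the functional $\tau(\,\cdot\,\otimes m_0)$ on $M_p$ factors through $M_p/\g_{\bm x}^\Gamma$, and by the dictionary above it is therefore $\g_{\bm x}^\Gamma$-invariant, which is the claim.

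I do not expect any genuine obstacle: all of the substance is contained in Theorem~\ref{thm:coinv} (proved in the companion paper~\cite{VY}), and the remaining work is the routine equivalence ``invariant functional $\Leftrightarrow$ functional on coinvariants'' together with a one-line diagram chase. The only point deserving a moment's care is to confirm that $\Phi$ above really is the map produced by Theorem~\ref{thm:coinv} --- that is, that the Taylor-series condition on $m_0$ in the statement of that theorem is precisely the standing hypothesis on $\C m_0$ imposed here --- but this is immediate from the way $m_0$ (and the module $M_0$ assigned to the origin) was chosen.
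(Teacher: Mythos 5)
Your argument is correct and is essentially the paper's own proof: the paper likewise observes that the hypothesis means $\tau(\cdot\otimes m_0)$ factors through the coinvariants with respect to $\Heis_{\bm x,0}^\Gamma\oplus\h_{\bm x,0}^\Gamma$, and then invokes Theorem \ref{thm:coinv} to conclude that it also factors through $M_p\big/\g_{\bm x}^\Gamma$, i.e.\ is $\g_{\bm x}^\Gamma$-invariant. Your version merely makes the intermediate map $\Phi$ and the standard dictionary between invariant functionals and functionals on coinvariants explicit, which is a harmless elaboration of the same two-line diagram chase.
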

\begin{proof}
By definition, $\tau(\cdot\otimes m_0)$ is a linear map $M_p\to \C$ that factors through $\left(M_p\otimes \C m_0\right)\big/ \left(\Heis_{\bm x, 0}^\Gamma\oplus \h_{\bm x, 0}^\Gamma\right)$. Theorem \ref{thm:coinv} asserts that any such map can also be factored through $M_p\big/ \g_{\bm x}^\Gamma$, i.e. is $\g_{\bm x}^\Gamma$-invariant.
\end{proof}

At the same time, smooth $\Heis(\g)\oplus \h\otimes\C((t))$-modules are also modules over the copy of $\nG\otimes\C((t))$ of \S\ref{Gsec}. With the obvious modifications, Theorem \ref{thm:coinv} and its corollary also hold with $\gh$ replaced by $\nG\otimes\C((t))$. 
That is, if $m_0 \in M_0$ is such that in the space of coinvariants $(\WW_0 \otimes M_p \otimes M_0)\big/\left(\Heis_{u, \bm x, 0}^\Gamma\oplus \h_{u, \bm x, 0}^\Gamma\right)$  
we have that $\iota_u \left[G_i[-1]\wac \otimes \bm m \otimes m_0\right]$ is a Taylor series in $u$ for all $\bm m \in M_p$ and $i \in I$,
then for every $\Heis_{\bm x,0}^\Gamma\oplus \h_{\bm x,0}^\Gamma$-invariant linear functional $\tau: M_p\otimes M_0\to \C$, $\tau(\cdot\otimes m_0)$ is invariant under $(\nG)_{\bm x}^\Gamma$.

\subsection{Wakimoto modules}\label{sec:wakm} 
The discussion of \S\ref{sec:functoriality} applies to any smooth $\Heis(\g)\oplus \h\otimes\C((t))$-modules $M_{(i)}$ of equal levels assigned to points $x_i$. In addition to copies of $\WW_0$, the other class of such modules we need are the \emph{Wakimoto modules}, whose definition we now recall. Given any $\chi \in \h^*\otimes \C((t))$, let $\C v_\chi$ denote the one-dimensional $\h\otimes \C((t))$-module with
\be f \on v_\chi = v_\chi \res_t \chi(f).\ee 
Then the \emph{Wakimoto module} $W_\chi$ is by definition the $\Heis(\g) \oplus \h\otimes\C((t))$-module 
\be W_\chi := \Mh \otimes \C v_\chi,\ee  
where $\Mh$ is the induced module over $\Heis(\g)$ defined by \eqref{Mdef}, or, equivalently, by
\be \Mh := 
\Ind_{(\n_\C\oplus \n^*_\C) \otimes \C[[t]] \oplus \C\mathbf 1}^{\Heis(\g)} \C\wac
\ee
with $\C\wac$ is the trivial one-dimensional module over $(\n_\C\oplus \n^*_\C) \otimes \C[[t]] \oplus \C\mathbf 1$.

(The module $W_\chi$ should be compared to $\WW_0 := \Mh \otimes \pi_0$, \eqref{WW0def}, which is induced in \emph{both} summands, $\Heis(\g)$ and $\h\otimes\C((t))$.)

Observe that a Wakimoto module $W_{\chi}$, $\chi\in \h^*\otimes \C((t))$, is a smooth module
over $\Heis(\g)\oplus \h\otimes\C((t))$. So by \eqref{smp}, $W_{\chi}$ is also a module over $\gh$.
We shall need two facts concerning the structure of $W_\chi$ as a $\gh$-module. First, recall the definition of $G_i[-1]\wac\in \Mh$ from \S\ref{Gsec}. The following was proved in \cite{FFR}.
Suppose 
\be \mu = -\frac{\alpha_k}{t} + \sum_{n=0}^\8 \mu^{(n)} t^n,\qquad \mu^{(n)}\in \h^*, \label{Gsing}\ee 
for some simple root $\alpha_k$; then $G_k[-1]\wac\otimes v_\mu\in W_{\mu}$ is a singular vector of imaginary weight (cf. \S\ref{singsec})  if and only if $\langle \alpha_k,\mu^{(0)}\rangle = 0$.

Second, the Wakimoto module $W_{\chi}$ inherits the grading of $\Mh$. In particular the subspace of grade 0 -- call it $\widetilde W_{\chi}$ -- is generated from the vacuum $\wac$ by the operators $a^*_{\alpha}[0]$, $\alpha\in \Delta^+$. This subspace is stable under the action of the Lie subalgebra $\g$ of $\gh$, and is isomorphic as a $\g$-module to the contragredient Verma module, \S\ref{sec:verm}, of highest weight $\res_t(\chi)\in \h^*$:
\be \widetilde W_{\chi} \cong_\g M_{\res_t(\chi)}^* 
.\label{wmi}\ee
(See \cite{FFR} and also \cite{FB}, \S11.2.6. Also cf. Proposition \ref{prop: Mshift} below.)

We now assign to each of the points $x_i$, $1\leq i\leq p$, a Wakimoto module 
\be W_{\chi_i}:= \Mh_i \otimes \C v_{\chi_i} \ee
over the local copy $\Heis_{(i)}\oplus \h\otimes\C((t-x_i))$ of the Lie algebra $\Heis(\g)\oplus\h\otimes\C((t))$. 

To the origin we assign a module $W_{\chi_0}^\Gamma$ defined as follows. Given  $\chi_0\in (\h^*\otimes \C((t)))^{\Gamma,-1}$ let $\C v_{\chi_0}$ denote the one-dimensional $(\h\otimes \C((t)))^\Gamma$-module defined by
\be f(t)\on v_{\chi_0} = v_{\chi_0} \frac 1 T \res_t \chi_0(f) .\label{vldef}\ee
Let $\Mh^\Gamma$ be the induced $\Heis(\g)^\Gamma$-module
\be \Mh^\Gamma := 
\Ind_{(\n_\C \otimes \C[[t]])^{\Gamma,0} \oplus (\n^*_\C \otimes \C[[t]])^{\Gamma,-1} \oplus \C\mathbf 1}^{\Heis(\g)^\Gamma} \C\wac^\Gamma
\ee
with $\C\wac^\Gamma$ the one-dimensional module over $(\n_\C \otimes \C[[t]])^{\Gamma,0} \oplus (\n^*_\C \otimes \C[[t]])^{\Gamma,-1} \oplus \C\mathbf 1$ on which $\mathbf 1$ acts as $\frac 1 T$ and the first two summands act as zero. Then 
\be W_{\chi_0}^\Gamma := \Mh^\Gamma \otimes \C v_{\chi_0}.\ee

Now $\bigotimes_{i=1}^p \Mh_i \otimes \Mh^\Gamma $ is an induced module over $H_{p,0}$ and, cf. Lemma \ref{complem}, we have that the space of coinvariants with respect to $\Heis_{\bm x,0}^\Gamma$ is of dimension one:
\be \left.\bigotimes_{i=1}^p \Mh_i \otimes \Mh^\Gamma \right/ \Heis_{\bm x,0}^\Gamma \cong_\C \C \wac^{\otimes p} \otimes \C \wac^\Gamma\cong \C.\label{MHcoinv}\ee

Meanwhile, we have the embedding of Lie algebras
\be (\iota_{t-x_1},\dots,\iota_{t-x_p},\iota_{t-0}) : \h_{\bm x,0}^\Gamma \longrightarrow \h_p \oplus (\h \otimes \C((t)))^\Gamma\ee
 -- where we now, cf. \eqref{hhi}, take also the Taylor expansion at the origin -- and in this way $v_{\bm \chi} \otimes v_{\chi_0}$ generates a one-dimensional $\h_{\bm x,0}^\Gamma$-module:
\be f(t) \on  \left(v_{\bm \chi} \otimes v_{\chi_0}\right) =  \left(v_{\bm \chi} \otimes v_{\chi_0}\right)\left(\sum_{i=1}^p \left(\res_{t-x_i}\chi_i(\iota_{t-x_i} f(t)\right) + \frac 1 T \res_{t-0}\chi_0(\iota_{t-0}f(t)))\right) \label{factv}\ee
cf. \eqref{vldef}.

Now, with $\lsigma$ as in \eqref{lsigmadef}, let
\be \h_{\bm x,0}^{*,\Gamma} :=  
\left(\h^*\otimes \C^\8_{\Gamma\bm x\cup\{0\}}(t)\right)^{\Gamma,-1} = 
 \{ f\in \h^*\otimes \C^\8_{\Gamma\bm x\cup\{0\}}(t): f(\omega t) = \omega^{-1} \lsigma f(t) \}.\ee

\begin{prop}\label{prop:vh} The space of coinvariants
$\left.\C v_{\bm \chi}\otimes \C v_{\chi_0}\right/ \h_{\bm x,0}^\Gamma$
is one-dimensional if and only if there exists a $\chi(t)\in \h_{\bm x,0}^{*,\Gamma}$ such that $\chi_i =\iota_{t-x_i} \chi(t)$ for each $i$, $0\leq i\leq p$.
 Otherwise it is zero-dimensional. 
\end{prop}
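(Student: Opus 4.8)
The plan is to reduce the dimension count to an invariance statement and then to invoke the $\Gamma$-equivariant strong residue theorem of Appendix \ref{sec:tert}. First I would observe that $\C v_{\bm\chi}\otimes\C v_{\chi_0}$ is a one-dimensional module over the abelian Lie algebra $\h_{\bm x,0}^\Gamma$, so either $\h_{\bm x,0}^\Gamma$ annihilates $v_{\bm\chi}\otimes v_{\chi_0}$, in which case the coinvariants equal the whole (one-dimensional) space, or some element of $\h_{\bm x,0}^\Gamma$ acts by a non-zero scalar and hence surjectively, so that the coinvariants vanish. Thus the proposition amounts to the claim that $\h_{\bm x,0}^\Gamma$ annihilates $v_{\bm\chi}\otimes v_{\chi_0}$ if and only if there is a $\chi(t)\in\h_{\bm x,0}^{*,\Gamma}$ with $\iota_{t-x_i}\chi(t)=\chi_i$ for $1\leq i\leq p$ and $\iota_{t-0}\chi(t)=\chi_0$. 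Using the explicit action \eqref{factv}, the annihilation condition reads
\begin{equation*}
0=\sum_{i=1}^p \res_{t-x_i}\chi_i\big(\iota_{t-x_i}f(t)\big)+\tfrac 1 T\,\res_{t-0}\chi_0\big(\iota_{t-0}f(t)\big)\qquad\text{for all }f\in\h_{\bm x,0}^\Gamma. \tag{$\ast$}
\end{equation*}

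For the ``if'' direction of $(\ast)$ I would argue directly. Given such a $\chi(t)$ and any $f\in\h_{\bm x,0}^\Gamma$, the function $g(t):=\chi(t)\big(f(t)\big)$ lies in $\C^\8_{\Gamma\bm x\cup\{0\}}(t)$; since $\chi$ has equivariance type $-1$ and $f$ type $0$ one finds $g(\omega t)=\omega^{-1}g(t)$, so $g(t)\,dt$ is a $\Gamma$-invariant rational one-form on $\PP^1$, and it is regular at $\infty$ because $g\sim\mathcal O(t^{-2})$ there. The residues of such a form at the points of a common $\Gamma$-orbit coincide, so the residue theorem on $\PP^1$ (as used in the proof of Lemma \ref{srt}) gives $0=T\sum_{i=1}^p\res_{t-x_i}g(t)+\res_{t-0}g(t)$; dividing by $T$ and using $\chi_i=\iota_{t-x_i}\chi$, $\chi_0=\iota_{t-0}\chi$ recovers $(\ast)$. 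This computation also explains the normalising factor $\tfrac1T$ built into \eqref{vldef}.

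The substantive direction is ``only if'', and here I would simply invoke the $\Gamma$-equivariant strong residue theorem proved in Appendix \ref{sec:tert}: condition $(\ast)$ is exactly its hypothesis for the non-degenerate pairing $\h^*\times\h\to\C$, for the collection of marked points $\bm x$ together with the $\Gamma$-fixed point $0$ (whose residue enters weighted by $\tfrac1T$), and for equivariance degree $k=-1$ on the $\h^*$-valued side, so that the test functions $f$ range over the degree $-k-1=0$ space $\h_{\bm x,0}^\Gamma$; its conclusion is precisely that $(\chi_1,\dots,\chi_p;\chi_0)$ lies in the image of $\iota$, i.e. that a global $\chi(t)\in\h_{\bm x,0}^{*,\Gamma}$ with the prescribed local expansions exists. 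The main obstacle I anticipate is therefore purely a matter of bookkeeping: checking that the appendix statement is formulated generally enough -- for an arbitrary $\Gamma$-space with an invariant pairing rather than for $\g$ with its Killing-normalised form, and allowing a $\Gamma$-fixed marked point carrying the weight $\tfrac1T$. If it is not literally that general, the remedy is to note that the appendix's proof (the splitting of a tuple of Laurent expansions into a global part plus a tuple of Taylor series, as in \eqref{gl}, followed by the residue theorem) goes through verbatim, the single residue at the $\Gamma$-fixed origin replacing an orbit-sum of $T$ equal terms -- which is exactly what the factor $\tfrac1T$ accounts for.
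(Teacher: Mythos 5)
Your proposal is correct and follows essentially the same route as the paper: the paper likewise reduces the dimension count to whether $\h_{\bm x,0}^\Gamma$ annihilates $v_{\bm\chi}\otimes v_{\chi_0}$, i.e.\ whether \eqref{factv} vanishes for all $f\in\h_{\bm x,0}^\Gamma$, and then invokes part (2) of the lemma in Appendix \ref{sec:tert} with $A=\h^*$, $B=\h$, the canonical pairing, and $\omega$ acting as $\lsigma$ on $\h^*$ and as $\sigma$ on $\h$. Your anticipated bookkeeping obstacle is moot, since that lemma is already stated for arbitrary vector spaces with a $\Gamma$-invariant non-degenerate pairing and already includes the $\Gamma$-fixed origin weighted by $\tfrac1T$, so your separate direct check of the ``if'' direction is harmless but redundant.
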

\begin{proof}
Certainly either  $\h_{\bm x,0}^\Gamma\on \left(\C v_{\bm \chi}\otimes \C v_{\chi_0}\right) = \C v_{\bm \chi}\otimes \C v_{\chi_0}$, in which case the space of coinvariants is zero-dimensional, or $\h_{\bm x,0}^\Gamma\on \left(\C v_{\bm \chi}\otimes \C v_{\chi_0}\right) = \{0\}$ and the space of coinvariants is of full dimension, \ie dimension one. But $\h_{\bm x,0}^\Gamma\on \left(\C v_{\bm \chi}\otimes \C v_{\chi_0}\right) = \{0\}$ means \eqref{factv} vanishes for all $f(t)\in \h_{x,0}^\Gamma$. By the lemma in Appendix \ref{sec:tert} (with $A=\h^*$, $B=\h$,   $\langle\cdot,\cdot\rangle$ the canonical pairing between them, and $\omega$ acting as $\lsigma$ on $\h^*$ and as $\sigma$ on $\h$) this occurs precisely when such a $\chi(t)\in \h_{\bm x,0}^{*,\Gamma}$ exists.
\end{proof}

\subsection{The weight $\lambda_0$ and the $\g^\sigma$-module structure of $\widetilde W_{\chi_0}^\Gamma$} \label{sec: origin}
We now need to show that the module $W_{\chi_0}^\Gamma$ at the origin can in fact be chosen so that the conditions of Theorem \ref{thm:coinv} are met. For any $h \in \h$ we let
\begin{equation} \label{lambda0def}
\lambda_0(h) := \sum_{r=1}^{T-1} \frac {\tr_\n (\sigma^{-r} \ad_h)} {1 - \omega^r}  = \sum_{r=1}^{T-1} \frac{1}{1 - \omega^r} \sum_{\substack{\alpha\in \Delta^+\\\sigma^r(\alpha)=\alpha}} \left( \prod_{p=0}^{r-1} \tau_{\sigma^p(\alpha)}^{-1} \right) \alpha(h),
\end{equation}
where $\ad_h:\n\to\n; X\mapsto [h,X]$ is the adjoint action of $\h$ on $\n$. 
This defines a weight $\lambda_0\in \h^*$. In fact, moreover, 
\be \lambda_0 = \lambda_0\circ \Pi_0\label{lP}\ee 
where $\Pi_k$, $k\in \Z_T$, are the projectors $\Pi_k := \frac 1 T\sum_{m=0}^{T-1} \omega^{-mk} \sigma^m : \g \to \g$, which obey $\sum_{k\in \Z_T} \Pi_k=\id$.  To see \eqref{lP}, note that $[\Pi_k\h,\Pi_\ell \n] \subseteq \Pi_{\ell+ k} \n$, $\sigma(\Pi_\ell \n) = \Pi_\ell\n$, and hence for all nonzero $r,k\in \Z_T$,
$\tr_\n(\sigma^r \ad_{\Pi_k h}) = \sum_{\ell\in \Z_T} \tr_{\Pi_\ell\n}(\sigma^r \ad_{\Pi_k h}) =0$. 

Note that the definition of $\lambda_0$ depends solely on the choice of $\g$, $\sigma$ and $T$. 

Now recall from Remark \ref{smprem} that smooth modules over $\Heis(\g)\oplus \h \otimes \C((t))$ become modules over a ``big'' Lie algebra $U(\WW_0)$ spanned by the formal modes of all states in $\WW_0$, and that $\gh$ embeds in $U(\WW_0)$. There is also a $\Gamma$-equivariant version of this construction \cite[\S5.2]{VY}. Namely, smooth modules over $\Heis(\g)^\Gamma\oplus (\h \otimes \C((t)))^\Gamma$, such as $W^\Gamma_{\chi_0}$, become modules over the subalgebra $U(\WW_0)^\Gamma$ of equivariant elements of $U(\WW_0)$, and the twisted affine algebra $\gh^\Gamma$ embeds in $U(\WW_0)^\Gamma$. 
The action of $U(\WW_0)^\Gamma$ on $W^\Gamma_{\chi_0}$ is specified by a \emph{quasi-module map} $Y_W(\cdot,u): \WW_0 \to \Hom(W^\Gamma_{\chi_0},W^\Gamma_{\chi_0}((u)))$,\footnote{Quasi-modules over vertex algebras were introduced  by Li,  \cite{Li1,Li3}. They are closely related to twisted modules.} $$Y_W(A,u)=: \sum_{n\in \Z} A^W_{(n)} u^{-n-1},$$ and this map $Y_W$ can in fact be defined by the relation 
\begin{equation} \label{YWdef}
\iota_u [X \otimes \bm m \otimes v] = [\wac \otimes \bm m \otimes Y_W(X,u) v].
\end{equation}
Explicitly, the action of the formal mode $A[n] \in U(\WW_0)^\Gamma$, $n \in \Z$ of some state $A \in \WW_0$ on $v \in W^\Gamma_{\chi_0}$ is given by $A[n] v = \frac{1}{T} A^W_{(n)} v$ -- for details see \cite{VY}. 

In this way, $W^\Gamma_{\chi_0}$ is a module over $\gh^\Gamma$. 
In particular, its grade 0 subspace $\widetilde W^\Gamma_{\chi_0}$, cf. \eqref{wmi}, is a module over $\g^\sigma:= \Pi_0\g$. 

\begin{prop}\label{prop: Mshift}
There is an isomorphism of $\g^\sigma$-modules
\begin{equation*}
\widetilde W^\Gamma_{\chi_0} \cong_{\g^\sigma} M^{\ast, \sigma}_{\frac{1}{T} (\res_t(\chi_0) - \lambda_0)}
\end{equation*}
where $M^{\ast, \sigma}_{\lambda}$ denotes the contragredient Verma module over $\g^\sigma$ of highest weight $\lambda\in \h^{*,\sigma}$. 
\end{prop}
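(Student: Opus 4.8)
The plan is to run the argument behind the untwisted identification \eqref{wmi} with $\g$, $\Vcrit$ and $W_\chi$ replaced by $\g^\sigma$, the twisted affine algebra $\gh^\Gamma$ and $W^\Gamma_{\chi_0}$, keeping careful track of the anomalous constants produced by the quasi-module structure. First I would fix the vector-space picture. Since $\sigma$ preserves the triangular decomposition \eqref{sigmacartan}, the projector $\Pi_0$ restricts to $\n$, $\h$, $\n_-$ and $\g^\sigma = \n^\sigma\oplus\h^\sigma\oplus\n_-^\sigma$. By construction the grade-$0$ part of $\Mh^\Gamma$ is the polynomial algebra on the grade-$0$ creation operators, i.e.\ on the $\Gamma$-symmetrised modes $a^*_{\bar\alpha}[0]$ indexed by a basis $\{E^{*,\bar\alpha}\}$ of the $\lsigma$-fixed subspace $(\n^*)^\sigma$; averaging over $\Gamma$ shows that the canonical pairing restricts to a perfect pairing of $(\n^*)^\sigma$ with $\n^\sigma$, so these operators are coordinates on $N^\sigma_+ := \exp(\n^\sigma)$, the unipotent radical of a Borel of $\g^\sigma$, and hence $\widetilde W^\Gamma_{\chi_0} = \big(\Mh^\Gamma\big)_{0}\otimes\C v_{\chi_0} \cong_\C \C[N^\sigma_+]$, matching $M^{*,\sigma}_\lambda$ weight space by weight space for any $\lambda\in\h^{*,\sigma}$.

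Next I would compute the $\g^\sigma$-action. By the $\Gamma$-equivariant Wakimoto construction of \cite{VY}, $W^\Gamma_{\chi_0}$ is a $\gh^\Gamma$-module through the quasi-module map $Y_W$ of \eqref{YWdef}, with homogeneous grade-$0$ subalgebra $\g^\sigma$, and on $\widetilde W^\Gamma_{\chi_0}$ the resulting action is the $\Gamma$-average of the free-field formulae \eqref{rhodef}. Because the coordinates $x_\alpha$ are chosen equivariantly (cf.\ \eqref{xsigdef}), this average is exactly the differential-operator realization $\bar\rho^\sigma$ of $\g^\sigma$ on $\C[N^\sigma_+]$ described in \S\ref{fds}: an element of $\n^\sigma$ acts by a first-order operator, with no normal-ordering ambiguity; an element of $\h^\sigma$ by $\bar\rho^\sigma$ of it plus a constant; and an element of $\n_-^\sigma$ by the corresponding second-order operator. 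The whole action is then forced, by the vertex-algebra axioms together with these facts, to be the $\lambda$-twisted realization $\bar\rho^\sigma_\lambda$ for $\lambda$ equal to the weight of the cyclic vector $\wac^\Gamma\otimes v_{\chi_0}$; and this is precisely the realization of $M^{*,\sigma}_\lambda$. So it only remains to identify $\lambda$, i.e.\ to compute the action of $h[0]$, $h\in\h^\sigma$, on $\wac^\Gamma\otimes v_{\chi_0}$.

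Using \eqref{rhodefH}, $\rho(h[-1]\vac) = -\sum_{\beta\in\Delta^+}\beta(h)\,a^*_\beta[0]a_\beta[-1]\wac + \sum_{i}c^h_i\,b_i[-1]\wac$ with $h = \sum_i c^h_i H_{\alpha_i}$, so there are two contributions to $h[0]\cdot(\wac^\Gamma\otimes v_{\chi_0})$. The $\h$-current piece acts on $v_{\chi_0}$ through \eqref{vldef} and contributes $\frac{1}{T}(\res_t\chi_0)(h)$. The piece coming from $a^*_\beta[0]a_\beta[-1]\wac$ would annihilate the vacuum of an honest module, but for the quasi-module $W^\Gamma_{\chi_0}$ the coefficient of $u^{-1}$ in $Y_W$ applied to $a^*_\beta[0]a_\beta[-1]\wac$ acquires a constant when evaluated on the $\Gamma$-twisted vacuum; this is the standard vacuum shift of a $\Gamma$-twisted $\beta\gamma$-system, supported on the roots $\alpha$ with $\sigma^r(\alpha)=\alpha$ and whose $r$-th summand carries a factor $(1-\omega^r)^{-1}$, so that summing it against $-\beta(h)$ reproduces exactly $-\frac{1}{T}\lambda_0(h)$ in the second form of \eqref{lambda0def}. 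Hence $\wac^\Gamma\otimes v_{\chi_0}$ has weight $\frac{1}{T}(\res_t(\chi_0) - \lambda_0)$ and is annihilated by $\n^\sigma$ (each element of $\bar\rho^\sigma(\n^\sigma)$ kills the constant function, by \eqref{grPQ}), which together with the previous paragraph gives $\widetilde W^\Gamma_{\chi_0}\cong_{\g^\sigma} M^{*,\sigma}_{\frac{1}{T}(\res_t(\chi_0)-\lambda_0)}$. I expect the main obstacle to be a clean derivation of the $\beta\gamma$ vacuum shift equal to $\lambda_0$: the natural route is to read it off from the explicit quasi-module formulae of \cite{VY} rather than redevelop that formalism here, and as a cross-check one may observe that $\lambda_0$ is precisely the constant for which the hypothesis of Theorem \ref{thm:coinv} can be met — so, alternatively, $\lambda$ could be pinned down indirectly from that requirement.
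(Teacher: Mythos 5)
Your overall strategy overlaps substantially with the paper's (identify the grade-zero subspace with polynomials in the $\sigma$-invariant modes $a^*_{(0,\alpha)}[0]$, observe that the $\n^\sigma$-action carries no anomaly because of \eqref{grPQ}, and locate the shift $\lambda_0$ in the $\h^\sigma$-action on the cyclic vector), but there are two genuine gaps. The first is the step where you declare that the full $\g^\sigma$-action is ``forced, by the vertex-algebra axioms together with these facts, to be the $\lambda$-twisted realization $\bar\rho^\sigma_\lambda$.'' That is an assertion, not an argument: in the quasi-module setting the zero mode of $\rho(F_{\alpha_i}[-1]\vac)$ is \emph{not} the naive $\Gamma$-average of $\bar\rho(F_{\alpha_i})$ -- the normal-ordered terms $Q^\beta_{\alpha_i}(a^*[0])a_\beta[-1]\wac$, $a^*_{\alpha_i}[-1]\wac$ and $a^*_{\alpha_i}[0]b_i[-1]\wac$ in \eqref{rhodefF} produce twisted-contraction corrections which are lower-order differential operators, not just constants, and nothing in the facts you list (the $\n^\sigma$-action plus the weight of $\wac^\Gamma\otimes v_{\chi_0}$) pins these down by ``axioms.'' The paper's proof is structured precisely so as never to compute them: from the anomaly-free $\n^\sigma$-action it deduces that $\widetilde W^\Gamma_{\chi_0}$ is \emph{co-free} over $\n^\sigma$, then uses the universal property of the coinduced module $M^{*,\sigma}_{\frac 1T(\res_t(\chi_0)-\lambda_0)}$ to obtain a $\g^\sigma$-homomorphism sending $v_{\chi_0}$ to the highest-weight vector, which is bijective because it is a morphism of co-free $\n^\sigma$-modules identifying the co-generators. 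You should either replace your ``forced'' step by this coinduction argument (you already have all the inputs for it) or carry out the $\n^\sigma_-$ zero-mode computation explicitly, which is considerably harder.

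The second gap is that the quantitative heart of the proposition -- that the twisted $\beta\gamma$ vacuum shift in $\left(\rho(H_{\alpha_i}[-1]\vac)\right)^W_{(0)}v_{\chi_0}$ equals $-\lambda_0(H_{\alpha_i})$ with $\lambda_0$ as in \eqref{lambda0def} -- is deferred to ``reading it off'' from the quasi-module formulae of \cite{VY}, or to the cross-check via Theorem \ref{thm:coinv}. The paper computes this directly and self-containedly by the swapping/coinvariants method: the simple-pole ``self-interaction'' term in $\iota_u\big[-\sum_\beta\beta(H_{\alpha_i})a^*_\beta[0]a_\beta[-1]\wac\otimes\bm m\otimes v_{\chi_0}\big]$ is supported on roots with $\sigma^k(\beta)=\beta$, carries the factors $(1-\omega^k)^{-1}\prod_p\tau^{-1}_{\sigma^p(\beta)}$, and sums to $-\frac1u\lambda_0(H_{\alpha_i})$; this residue computation is what actually has to be done. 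Note also that your proposed ``indirect'' determination of $\lambda$ from the requirement that the hypothesis of Theorem \ref{thm:coinv} can be met is circular in the paper's logic: verifying that hypothesis for $\chi_0=\lambda_0/t+\mc O(t^0)$ is done in the proposition immediately following, whose proof uses the present isomorphism, and checking it directly would amount to the same anomaly computation you are trying to avoid.
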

\begin{proof}  
First, for all $v\in \widetilde W^\Gamma_{\chi_0}$,
\begin{align} \res_u \iota_u 
\left[\rho\left(E_{\alpha}[-1]\vac\right) \otimes \bm m \otimes v\right]
&= \res_u \iota_u 
\left[\sum_{\beta\in \Delta^+} P^\beta_\alpha(a^*[0]) a_\beta[-1]\wac \otimes \bm m \otimes v \right] \nn\\
&= \res_u \iota_u\frac 1 u 
\left[\sum_{\beta\in \Delta^+} P^\beta_\alpha(a^*[0])\wac \otimes \bm m \otimes \sum_{l=0}^{T-1} \sigma^la_\beta[0] v\right] \nn\\
&= \left[\wac \otimes \bm m \otimes \sum_{\beta\in \Delta^+} P^\beta_\alpha\left(\sum_{k=0}^{T-1} \sigma^k a^*[0]\right) \sum_{l=0}^{T-1} \sigma^la_\beta[0] v \right].
\label{443}\end{align}
The first equality here is by ``swapping'', cf. \S\ref{sec:twistedswapping}, the operator $a_\beta[-1]$ using the rational function
\begin{equation} \label{swapEF}
\sum_{k\in \Z_T} \frac{\sigma^k a_\beta}{\omega^{-k} t-u} \in \Heis_{u}^\Gamma.
\end{equation} 
Note that the ``self-interaction'' term vanishes because \eqref{grPQ} ensures that $a^*_\beta[0]$ cannot occur in the polynomial $P_\alpha^\beta$. For the second equality we repeatedly first ``swap'' using a function of the form $\sum_{k\in \Z_T} \frac{\omega^{-k} \sigma^k a^*_\gamma}{ \omega^{-k} t - u} \in \Heis_u^\Gamma$ and then note that the result can be re-written as the result of ``swapping'' using a function of the form $\sum_{k=0}^{T-1} \frac{\omega^{-k} \sigma^k a_\gamma^*}{\omega^{-k}t} = \frac 1 t\sum_{k=0}^{T-1}\sigma^k a_\gamma^*$, keeping only the leading term in $u$ at each step because we are computing the residue. 

Now, cf. Remark \ref{coordrem}, we can choose to work in coordinates on $N_+$ adapted to $\sigma$. Namely, we can pick a basis $\{E_{(i,\alpha)}: 1\leq i\leq T-1, \alpha\in \Delta_i^+\}$ of $\n$, where for each $i$, $E_{(i,\alpha)}\in \Pi_i\n$ and $\alpha$ runs over the set $\Delta^+_i$ of $\g^\sigma$-weights of $\Pi_i\n$. (The Lie algebra $\g^\sigma$ acts on $\Pi_i\g$ by the adjoint action. The $E_{(i,\alpha)}$ are not all root vectors of $\g$ unless $\sigma$ is inner.) Let $x_{(i,\alpha)}$ and $\del_{(i,\alpha)}$, $1\leq i\leq T-1$, $\alpha \in \Delta^+_i$, be homogeneous coordinates and derivatives such that $\lsigma x_{(i,\alpha)} = \omega^i x_{(i,\alpha)}$ and $\lsigma \circ \del_{(i,\alpha)} \circ \lsigma^{-1} = \omega^i \del_{(i,\alpha)}$. (For example, define $x_{(i,\alpha)}(n)$, $n\in N_+$, by $n= \exp(\sum_{i=0}^{T-1} \sum_{\alpha\in \Delta^+_i} x_{(i,\alpha)} E_{(i,\alpha)})$.) 
Then \eqref{443} gives in particular 
\be\res_u \iota_u 
\left[\rho\left(E_{(0,\alpha)}[-1]\vac\right) \otimes \bm m \otimes v\right]
= \left[\wac \otimes \bm m \otimes \sum_{\beta\in \Delta^+_0} P^{(0,\beta)}_{(0,\alpha)}\left( T a_{(0,\bullet)}^*[0]\right) T a_{(0,\beta)}[0] v \right]
\ee
for these are the only terms that survive the projections $\frac{1}{T} \sum_{k=0}^{T-1}\sigma^k$ on the right of \eqref{443}. That is, cf. \eqref{YWdef},
\begin{equation}
\left(\rho(E_{(0,\alpha)}[-1]\vac)\right)[0] v = \frac{1}{T} \left(\rho\left(E_{(0,\alpha)}[-1]\vac\right)\right)^W_{(0)} \on v = \sum_{\beta\in \Delta^+_0} P^{(0,\beta)}_{(0,\alpha)}\left( T a_{(0,\bullet)}^*[0]\right) a_{(0,\beta)}[0] v.\label{444}
\end{equation}

By definition $\widetilde W^\Gamma_{\chi_0} = \C\big[a^{\ast}_{(0,\alpha)}[0]\big]_{\alpha\in \Delta_0^+} v_{\chi_0}$, which is naturally identified as a vector space with the polynomial algebra $\C[x_{(0,\alpha)}]_{\alpha\in \Delta_0^+}$, with $v_{\chi_0}$ identified with $1$. As in \S\ref{fds} we have the realization of the Lie algebra $\g^\sigma$ in terms of first-order differential operators acting on this polynomial algebra. In particular $E_{(0,\alpha)}$ is realized as $\sum_{\beta\in \Delta^+_0} P_{(0,\alpha)}^{(0,\beta)}(x_{(0,\bullet)}) \del_{(0,\beta)}$. Recall that the generator $\mathbf 1$ acts on $\widetilde W^\Gamma_{\chi_0}$ as $\frac{1}{T}$, so that we have a homomorphism of Weyl algebras $x_{(0,\alpha)} \mapsto T a^{\ast}_{(0, \alpha)}[0]$ and $\partial_{(0,\alpha)} \mapsto a_{(0, \alpha)}[0]$. Thus, what \eqref{444} shows is that the identification  $\widetilde W^\Gamma_{\chi_0} \cong \C[x_{(0,\alpha)}]_{\alpha\in \Delta_0^+}$ is an isomorphism of $\n^\sigma$-modules.

Recall that, as a module over $\n^\sigma$, the contragredient Verma module $M^{\ast, \sigma}_{\lambda}$ is co-free on one co-generator.\footnote{and this module structure is independent of the $\g^\sigma$-weight $\lambda$; indeed, $M^{\ast, \sigma}_\lambda \cong_{\n^\sigma} \Hom^\textup{res}_{\C}(U(\n^\sigma),\C)=: U(\n^\sigma)^\vee$.} 
There is an identification of $\C[x_{(0,\alpha)}]_{\alpha\in \Delta_0^+}$ with $M^{\ast, \sigma}_{\lambda}$ as $\g^\sigma$-modules, given by modifying the realization of $\g^\sigma$ by differential operators on $N^\sigma_+$ by a cocycle specified by the weight $\lambda$; see e.g. \cite[\S11.2.6]{FB}. This modification does not alter the action of $\n^\sigma$. So $\widetilde W^\Gamma_{\chi_0}\cong \C[x_{(0,\alpha)}]_{\alpha\in \Delta_0^+}$ is also co-free as a module over $\n^\sigma$. 

Next we should compute $\left(\rho(H_{\alpha_i}[-1]\vac)\right)[0] v_{\chi_0} = \frac{1}{T} \left(\rho(H_{\alpha_i}[-1]\vac)\right)^W_{(0)}\on v_{\chi_0}$. 
Consider therefore $\iota_u \left[\rho\left(H_{\alpha_i}[-1]\vac\right) \otimes \bm m \otimes v_{\chi_0}\right]$. By swapping using the rational function 
\be \sum_{k\in \Z_T} \frac{\omega^{-k} \sigma^k a^*_\beta }{ \omega^{-k} t - u} \in \Heis_u^\Gamma,\ee
cf. \eqref{genid} and \eqref{Hxdef}, the first term gives
\begin{align*}
\iota_u \Bigg[ - &\sum_{\beta\in \Delta^+} \beta(H_{\alpha_i}) a^*_\beta[0] a_\beta[-1]\wac \otimes \bm m \otimes v_{\chi_0} \Bigg]\\
& = \sum_{k=1}^{T-1} \frac{1}{(1 - \omega^k) u} \sum_{\beta\in\Delta^+} \left( \prod_{p=0}^{k-1} \tau_{\sigma^p(\beta)}^{-1} \right) \beta(H_{\alpha_i}) \, \iota_u \left[ (a_{\sigma^k(\beta)}^*)[1] a_\beta[-1] \wac \otimes \bm m \otimes v_{\chi_0} \right] + \mathcal{O}(u^0)\\
& = - \frac{1}{u} \left[ \wac \otimes \bm m \otimes v_{\chi_0} \right] \lambda_0(H_{\alpha_i}) + \mathcal{O}(u^0),
\end{align*}
with $\lambda_0$ as in \eqref{lambda0def} and where $\mathcal{O}(u^0)$ denotes terms in non-negative powers of $u$. The other term in $\iota_u \left[\rho\left(H_{\alpha_i}[-1]\vac\right) \otimes \bm m \otimes v_{\chi_0}\right]$ is
\begin{align*}
\iota_u [b_i[-1]\wac \otimes \bm m \otimes v_{\chi_0}]
 &= \left[ \wac \otimes \bm m \otimes \frac 1 u \sum_{r\in \Z_T} (\sigma^r b_i)[0]  v_{\chi_0}\right] + \mathcal{O}(u^0)\\
 &= \frac 1 u  \left[ \wac \otimes \bm m \otimes   v_{\chi_0}\right]\frac 1 T \sum_{r\in \Z_T}(\res_t\chi_0)\left(\sigma^r H_{\alpha_i}\right) + \mathcal{O}(u^0)\\
 &= \frac 1 u  \left[ \wac \otimes \bm m \otimes   v_{\chi_0}\right] (\res_t\chi_0)\left(\Pi_0 H_{\alpha_i}\right) + \mathcal{O}(u^0),
\end{align*}
in view of \eqref{vldef} and \eqref{lP}.

Thus, $\widetilde W^\Gamma_{\chi_0}$ is a $\g^\sigma$-module containing a non-zero vector, $v_{\chi_0}$, such that
\be \n^\sigma \on v_{\chi_0} = 0 \nn,\qquad h \on v_{\chi_0} = v_{\chi_0} \frac{1}{T} ( \res_t (\chi_0) - \lambda_0)(h).\ee
Since $M^{\ast, \sigma}_{\frac{1}{T} (\res_t(\chi_0) - \lambda_0)}$ is a coinduced $\g^\sigma$-module, it follows by the universal property of coinduced modules that there is a homomorphism of $\g^\sigma$-modules $\widetilde W^\Gamma_{\chi_0} \to M^{\ast, \sigma}_{\frac{1}{T} (\res_t(\chi_0) - \lambda_0)}$ sending $v_{\chi_0}$ to the highest weight vector in $M^{\ast, \sigma}_{\frac{1}{T} (\res_t(\chi_0) - \lambda_0)}$. We need to show it is a bijection. But it is in particular a homomorphism of two co-free $\n^\sigma$-modules which identifies their co-generators, so it must be an isomorphism of $\n^\sigma$-modules. Hence it is indeed bijective.
\end{proof}

Now we have the following, cf. Theorem \ref{thm:coinv}.
\begin{prop}
Suppose $\chi_0 = \lambda_0/t + \mc O(t^0)\in (\h\otimes \C((t)))^{\Gamma,-1}$. Then, 
in the space of coinvariants $(\WW_0 \otimes M_p \otimes W^\Gamma_{\chi_0})\big/\left(\Heis_{u,\bm x,0}^\Gamma\oplus \h_{u,\bm x,0}^\Gamma\right)$, for any $\bm m \in M_p$ we have that
\begin{equation*} \label{EFgone}
\iota_u \left[\rho(A[-1]\vac) \otimes \bm m \otimes v_{\chi_0}\right],
\quad\text{for all $A\in \g$,}
\end{equation*}
 and
\begin{equation*} \label{Hgone}
\iota_u \left[G_i[-1]\wac \otimes \bm m \otimes v_{\chi_0}\right], \quad
\text{for all $i\in I$,}
\end{equation*}
are Taylor series in $u$.
\end{prop}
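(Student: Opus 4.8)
This statement provides precisely the hypotheses needed to invoke Theorem \ref{thm:coinv} for the pair $(\gh,\WW_0)$ and, with the obvious modifications, its $\nG\otimes\C((t))$-analogue. The plan is therefore to verify both displayed regularity claims by a direct computation with the cyclotomic swapping procedure of \S\ref{sec:twistedswapping}, the copy of $\WW_0$ being assigned to the point $u$.

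The idea is to move the grade-$(-1)$ creation operators occurring in $\rho(A[-1]\vac)$ and in $G_i[-1]\wac$ --- namely the $a_\beta[-1]$, together with $a^*_{\alpha_i}[-1]$ and $b_i[-1]$ in the cases $A=F_{\alpha_i}$, $A=H_{\alpha_i}$ --- off of $\WW_0$, each with a suitable $\Gamma$-equivariant rational function having poles only at the points $\Gamma u$ (for instance $\sum_{k\in\Z_T}\sigma^k a_\beta/(\omega^{-k}t-u)\in\Heis_u^\Gamma$ for an $a_\beta[-1]$, and the analogous functions of \eqref{swapEF}-type, cf.\ also \eqref{swap function}, for the $a^*$- and $b$-modes). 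Any such function is regular at each $x_i$ and at $0$; its expansion at the $x_i$ acts on $M_p$ with coefficients that are rational in $u$ with poles only at the nonzero points $\omega^{-k}x_i$, hence regular at $u=0$. So the only possible sources of a pole at $u=0$ are the expansion at the origin, acting on $v_{\chi_0}\in W^\Gamma_{\chi_0}$, and the ``self-interaction'' terms acting back on $\WW_0$. Since $A\mapsto\rho(A[-1]\vac)$ and the coinvariant map are linear it suffices to treat $A$ in the basis $\{E_\alpha,F_\alpha\}_{\alpha\in\Delta^+}\cup\{H_i\}_{i\in I}$ of $\g$; moreover the weight grading \eqref{grPQ} (a comparison of heights) forces $a^*_{\sigma^k\beta}[0]$ never to occur in the degree-$(\beta-\alpha)$ polynomial $P^\beta_\alpha$, so $\rho(E_\alpha[-1]\vac)=\sum_\beta P^\beta_\alpha(a^*[0])a_\beta[-1]\wac$ for every $\alpha\in\Delta^+$ (with no normal-ordering correction), $G_i[-1]\wac=\sum_\beta R^\beta_i(a^*[0])a_\beta[-1]\wac$, while $\rho(F_\alpha[-1]\vac)$ and $\rho(H[-1]\vac)$ retain the shape of \eqref{rhodef}.

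For $A=E_\alpha$ --- and, identically, for $G_i[-1]\wac$ --- the argument is clean. The expansion at the origin of $\sum_k\sigma^k a_\beta/(\omega^{-k}t-u)$ has all its modes in $(\n_\C\otimes\C[[t]])^{\Gamma,0}$, which annihilate $v_{\chi_0}$, so the origin contributes nothing; and, as just noted, the self-interaction vanishes because $a^*_{\sigma^k\beta}[0]$ does not occur in $P^\beta_\alpha$ (resp.\ $R^\beta_i$). Iterating --- one also swaps off the grade-$0$ operators $a^*_\gamma[0]$ that remain, whose self-interactions again vanish and whose origin contributions again annihilate $v_{\chi_0}$ --- reduces the coinvariant to $[\wac\otimes(\text{products of }\iota_{t-x_i}\text{-expansions})\,\bm m\otimes v_{\chi_0}]$, which is regular at $u=0$. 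For $A=H_{\alpha_i}$, write $\rho(H_{\alpha_i}[-1]\vac)=-\sum_\beta\beta(H_{\alpha_i})a^*_\beta[0]a_\beta[-1]\wac+b_i[-1]\wac$. Swapping the $a^*_\beta[0]$ in the first summand, the origin again contributes nothing (its expansion there lies in $(\n^*_\C\otimes\C[[t]])^{\Gamma,-1}$), while the self-interaction contracts an $a^*_\beta[1]$ against $a_\beta[-1]$ and, after multiplying by $-\beta(H_{\alpha_i})$ and summing over $\beta$, produces exactly $-u^{-1}\lambda_0(H_{\alpha_i})[\wac\otimes\bm m\otimes v_{\chi_0}]+\mathcal O(u^0)$ via the fixed-root formula \eqref{lambda0def}. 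Swapping $b_i[-1]$ in the second summand, the self-interaction vanishes (the $\h$-valued currents commute), only the grade-$0$ mode $\Pi_0H_{\alpha_i}$ survives at the origin --- here one uses $\chi_0=\lambda_0/t+\mathcal O(t^0)$ and $\lambda_0=\lambda_0\circ\Pi_0$ --- and it contributes $+u^{-1}(\res_t\chi_0)(\Pi_0H_{\alpha_i})[\wac\otimes\bm m\otimes v_{\chi_0}]+\mathcal O(u^0)$. Since $\res_t\chi_0=\lambda_0$ and, by \eqref{lP}, $(\res_t\chi_0)(\Pi_0H_{\alpha_i})=\lambda_0(H_{\alpha_i})$, these two simple poles cancel and the expansion is a Taylor series.

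The case $A=F_\alpha$ is where the main obstacle lies. One must additionally swap the $a^*_{\alpha_i}[-1]$ and $b_i[-1]$ in the extra terms $c_ia^*_{\alpha_i}[-1]\wac$ and $-a^*_{\alpha_i}[0]b_i[-1]\wac$ of \eqref{rhodefF}, and --- unlike for $E_\alpha$ --- the self-interaction of the $Q^\beta_\alpha$-term need not vanish, since $\deg Q^\beta_\alpha=\beta+\alpha$ allows $a^*_{\alpha}[0]$ to occur in $Q^\beta_\alpha$ when $\sigma^k\beta=\beta$. The plan is to carry out all three swaps, observe that the potentially singular pieces --- the $Q^\beta_\alpha$-self-interaction, the $a^*_{\alpha_i}[-1]$-self-interaction carrying the factor $c_i$, and the origin contribution $\propto\lambda_0(H_{\alpha_i})$ of the $b_i[-1]$-swap in $-a^*_{\alpha_i}[0]b_i[-1]\wac$ --- reduce to multiples of $u^{-1}[\wac\otimes(\text{regular in }u)\,\bm m\otimes v_{\chi_0}]$, and then check that their sum vanishes. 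The hard part is exactly this bookkeeping; the cancellation uses the defining constants $c_i$ of the Wakimoto realization \cite{FF90,Szc} (which encode precisely the identities making $\rho$ a homomorphism), the relation $\lambda_0=\lambda_0\circ\Pi_0$ of \eqref{lP}, the hypothesis $\res_t\chi_0=\lambda_0$, and the gradings \eqref{grPQ}, and the computations run parallel to those in the proof of Proposition \ref{prop: Mshift}, extended to cover $F$. Having checked this for all $A$ in the basis, linearity yields the first display for all $A\in\g$; the second display is the $G_i$ case treated above.
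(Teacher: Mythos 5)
Your computations for $E_\alpha$, for $G_i$, and for $H_{\alpha_i}$ are fine (they essentially reproduce the swaps carried out in the proof of Proposition \ref{prop: Mshift}), but the proposal has a genuine gap at exactly the point you flag as ``the hard part'': for $A=F_{\alpha_i}$ you never establish the claimed cancellation between the self-interaction of the $Q^\beta_{\alpha_i}$-term, the $c_i\,a^*_{\alpha_i}[-1]$-term, and the origin contribution of the $b_i[-1]$-swap. You assert that it ``uses the defining constants $c_i$'' and the hypothesis $\res_t\chi_0=\lambda_0$, but no identity relating these quantities is derived, and nothing in the paper gives you the values of the $c_i$; so the statement is simply not proved for the lowering generators. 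The difficulty compounds for non-simple roots: your case-by-case plan needs an explicit formula for $\rho(F_\alpha[-1]\vac)$ for \emph{all} $\alpha\in\Delta^+$, whereas \eqref{rhodefF} is only stated for simple roots (the images of non-simple $F_\alpha$ acquire further correction terms), and vanishing of the residue for the simple generators does not linearly imply it for their commutators without extra structure that you have not invoked.

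The paper avoids this entirely by a structural argument, and this is where the hypothesis $\chi_0=\lambda_0/t+\mc O(t^0)$ really enters. By inspection the class has at most a simple pole at $u=0$, with residue proportional to $\left(\rho(A[-1]\vac)\right)^W_{(0)}v_{\chi_0}$; the equivariance properties of the quasi-module map $Y_W$ (Lemma 5.4 of \cite{VY}) show this equals $\left(\rho(\Pi_0A[-1]\vac)\right)^W_{(0)}v_{\chi_0}$, so only the $\g^\sigma$-component of $A$ matters. Then Proposition \ref{prop: Mshift} — which is precisely why $\lambda_0$ is built into $\chi_0$ — gives $\widetilde W^\Gamma_{\chi_0}\cong_{\g^\sigma}M^{\ast,\sigma}_0$, and the $\g^\sigma$-submodule through the highest weight vector of $M^{\ast,\sigma}_0$ is the trivial module; hence \emph{all} zero modes, including the lowering ones, annihilate $v_{\chi_0}$, with no bookkeeping of the $c_i$ and no need for free-field formulas beyond the simple-root generators. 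Only the $G_i$ case is then checked by a direct swap, where (as you correctly note) $\deg R_i^\beta=\beta-\alpha_i$ kills all self-interactions. To repair your proof you would either have to carry out and verify the $F$-cancellation explicitly (including pinning down the $c_i$ and treating non-simple roots), or import the reduction to $\Pi_0 A$ and the module identification as the paper does.
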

\begin{proof}
Let $A\in \g$. By inspection, $\iota_u \left[\rho\left(A[-1]\vac\right) \otimes \bm m \otimes v_{\chi_0}\right]$ has at most a simple pole at $u$. We must show that this pole vanishes, \ie that $\left(\rho\left(A[-1]\vac\right)\right)^W_{(0)} v_{\chi_0} = 0$. The symmetries of $Y_W$ -- see \cite[Lemma 5.4]{VY} -- imply that $\left(\rho\left(A[-1]\vac\right)\right)^W_{(0)} v_{\chi_0} = \left(\rho\left(\Pi_0A[-1]\vac\right)\right)^W_{(0)} v_{\chi_0}$. Thus, what has to be checked is that the $\g^\sigma$-submodule through $v_{\chi_0}$ is the trivial module.  
By the preceeding proposition, if $\chi_0 = \lambda_0/t + \mc O(t^0)$ then $\widetilde W^\Gamma_{\chi_0}\cong_{\g^\sigma} M^{\ast, \sigma}_0$. It is a standard fact about $M^{\ast, \sigma}_0$, easily verified, that the submodule through its highest weight vector is indeed the trivial module. 

It remains to consider $\iota_u \left[G_i[-1]\wac \otimes \bm m \otimes v_{\chi_0}\right]$. For this one sees directly, by an argument just as for $E_\alpha$ in the preceeding proof, that there is no pole term (again, there are no non-zero ``self-interaction'' terms because $\deg R_i^\beta = \beta - \alpha_i$). 
\end{proof}

\subsection{Bethe equations}
Now we pick a tuple $(c(1),\dots,c(m))\in I^m$ of nodes of the Dynkin diagram of $\g$, and consider the element 
\be \lambda(t) := \sum_{r\in \Z_T} \left(\sum_{i=1}^N\frac{\lsigma^r\lambda_i}{t-\omega^rz_i} - \sum_{j=1}^m \frac{\lsigma^r\alpha_{c(j)}}{t-\omega^rw_j}\right) + \frac{\lambda_0}{t} \in \h_{\bm x,0}^{*,\Gamma}.\label{lambdadef}\ee
Here the weights $\lambda_1,\dots,\lambda_N\in \h^*$ are those of \eqref{Hilbert} and $\lsigma\lambda:=\lambda\circ \sigma^{-1}$ as in \eqref{lsigmadef}. 

Let  
\be (\chi_1,\dots,\chi_N,\mu_1,\dots\mu_m) := \iota(\lambda(t)).\label{chimudef}\ee
For each $j$, $1\leq j\leq m$, 
\be \mu_j = -\frac{\alpha_{c(j)}}{t-w_j} 
+ 
\sum_{r=0}^{T-1} \sum_{i=1}^N\frac{\lsigma^r\lambda_i}{w_j-\omega^rz_i}
-
\sum_{r=0}^{T-1}\sum_{\substack{k=1\\k\neq j}}^m \frac{\lsigma^r\alpha_{c(k)}}{w_j-\omega^rw_k}
+
\frac{1}{w_j} \left( \sum_{r=1}^{T-1} \frac{\lsigma^r\alpha_{c(j)}}{\omega^r - 1} + \lambda_0\right)
+\mc O(t-w_j).\nn
\ee
Hence, cf. \eqref{Gsing}, we have that for each $j$, $1\leq j\leq m$, the vector $G_{c(j)}[-1]\wac\in W_{\mu_j}$ is singular if and only if 
\begin{equation} \label{tbe}
0= \sum_{r=0}^{T-1} \sum_{i=1}^N\frac{\langle \alpha_{c(j)},\lsigma^r\lambda_i\rangle}{w_j-\omega^rz_i} - \sum_{r=0}^{T-1} \sum_{\substack{k=1\\k\neq j}}^m \frac{\langle \alpha_{c(j)},\lsigma^r\alpha_{c(k)}\rangle}{w_j-\omega^rw_k} +
\frac{1}{w_j}
\left(- \frac{1}{2} \sum_{r=1}^{T-1} \langle \alpha_{c(j)},\lsigma^r\alpha_{c(j)}\rangle + \langle \alpha_{c(j)}, \lambda_0 \rangle \right).
\end{equation}
Here we have used the fact that $\sum_{r=1}^{T-1} \frac{\langle \alpha_{c(j)},\lsigma^r\alpha_{c(j)}\rangle}{\omega^r - 1} = \sum_{r=1}^{T-1} \frac{\langle \alpha_{c(j)},\lsigma^r\alpha_{c(j)}\rangle}{\omega^{-r} - 1}$ in order to rewrite the first term in brackets as $- \sum_{r=1}^{T-1} \frac{\langle \alpha_{c(j)},\lsigma^r\alpha_{c(j)}\rangle}{2}$.
Let us call these equations \eqref{tbe}, for $1\leq j\leq m$, the \emph{cyclotomic Bethe equations}. For each fixed choice of tuple $(c(1),\dots, c(m))\in I^m$, they form a set of equations on the points $w_1,\dots,w_m$.

\subsection{The cyclotomic weight function and Schechtmann-Varchenko formula}\label{sec:bv} We now specialize to considering the tensor product $\bigotimes_{i=1}^N W_{\chi_i} \otimes \bigotimes_{j=1}^m W_{\mu_j}$, cf. \eqref{chimudef}, of Wakimoto modules assigned to the points $z_1,\dots,z_N,w_1,\dots,w_m$. From \eqref{MHcoinv} and Proposition \ref{prop:vh}, we have
\begin{subequations}\label{taugdef} 
\be \left(\bigotimes_{i=1}^N W_{\chi_i} \otimes \bigotimes_{j=1}^m W_{\mu_j}\otimes \C v_{\chi_0}\right)\bigg/\Heis_{\bm x}^\Gamma \oplus \h_{\bm x}^\Gamma 
\cong_\C \C .\label{isoms}
\ee
There is therefore a unique, up to normalization, $\Heis_{\bm x}^\Gamma\oplus \h_{\bm x}^\Gamma$-invariant linear functional 
\be \tau_\Gamma: \bigotimes_{i=1}^N W_{\chi_i} \otimes \bigotimes_{j=1}^m W_{\mu_j}\otimes \C v_{\chi_0} \to \C.\ee
\end{subequations}
To fix the normalization we may, cf. \eqref{MHcoinv}, set $\tau_{\Gamma}(\wac^{\otimes p}) = 1$, where $\wac^{\otimes p}$ is the vacuum state in $ \Mh^{\otimes p}\cong_\C \bigotimes_{i=1}^N W_{\chi_i} \otimes \bigotimes_{j=1}^m W_{\mu_j}\otimes \C v_{\chi_0}$.
Hence, for each choice of the tuple $(c(1),\dots,c(m))\in I^m$, we have a linear functional 
\be \tau_\Gamma(\cdot , G_{c(1)}[-1] \wac, \dots, G_{c(m)}[-1]\wac)
:  \bigotimes_{i=1}^N W_{\chi_i} \to \C.\ee
Now define the \emph{cyclotomic weight function},
\be \psi_\Gamma= \psi_\Gamma(c(1),\dots,c(m);w_1,\dots,w_m) : \bigotimes_{i=1}^N M^*_{\lambda_i} \to \C \ee
to be the restriction of this functional $\tau_\Gamma$ to the subspace $\bigotimes_{i=1}^N\widetilde W_{\chi_i}\cong_\g \bigotimes_{i=1}^N M^*_{\lambda_i}$, cf. \eqref{wmi} and \eqref{chimudef}. A more explicit expression for $\psi_\Gamma$ is given by the following proposition. 

An \emph{ordered partition} of $\{1,\dots,m\}$ into $N$ parts is a composition  $p_1+p_2+\dots + p_N=m$, $(p_1,\dots,p_N)\in \Z_{\geq 0}^N$, of $m$ into $N$ parts, together with an $N$-tuple 
\be \bm n = (n^1_1, \ldots, n^1_{p_1}; n^2_1,\dots, n^2_{p_2}; \dots ; n^N_1, \ldots, n^N_{p_N})\nn\ee 
whose elements are a permutation of $(1,2,\dots,m)$.  Let $P_{m, N}$ be the set of all such ordered partitions. We shall often say $\bm n \in P_{m,N}$, leaving the composition $(p_1,\dots,p_N)$ implicit. 

Given $\lambda\in \h^*$, let $\mathsf{v}_{\lambda} \in M_{\lambda}$ be a highest weight vector normalized by $\mathsf{v}_{\lambda}(\wac)=1$. Here we regard $M_{\lambda}$ as the contragredient dual of $M^*_{\lambda}$, cf. \S\ref{sec:verm}, and note that $\wac\in\widetilde W_{\lambda\otimes t^{-1}+\mc O(t^0)}\cong_\g M^*_{\lambda}$ is a highest weight vector.

As a convenient shorthand, we write
\be \check\sigma(x) := \omega\sigma(x),\quad\text{for}\quad x\in \g.\nn\ee 

\begin{prop}\label{psiprop} The cyclotomic weight function $\psi_\Gamma$ is an element of $M_{(\bm\lambda)}=\bigotimes_{i=1}^N M_{\lambda_i}$ and is given explicitly by
\begin{align} \psi_\Gamma = 
 (-1)^m\!\!\!\!\!\! \sum_{\substack{\bm n\in P_{m,N}\\  (k_1,\dots,k_m) \in \Z_T^m}}
  \bigotimes_{i=1}^N \frac{ \check\sigma^{k_{n^i_1}}(F_{c(n^i_{1})})\check\sigma^{k_{n^i_2}}(F_{c(n^i_2)})\dots \check\sigma^{k_{n^i_{p_{i}-1}}}(F_{c(n^i_{p_{i}-1})}) \check\sigma^{k_{n^i_{p_i}}}(F_{c(n^i_{p_i})}) \mathsf{v}_{\lambda_i}}
   {\Big(\omega^{k_{n^i_1}} w_{n^i_1} - \omega^{k_{n^i_2}} w_{n^i_2}\Big)\dots
    \Big(\omega^{k_{n^i_{p_{i}-1}}}w_{n^i_{p_{i}-1}} - \omega^{k_{n^i_{p_i}}} w_{n^i_{p_i}}\Big)
        \Big(\omega^{k_{n^i_{p_i}}}w_{n^i_{p_i}} -   z_i\Big)  }.
\nn\end{align}
\end{prop}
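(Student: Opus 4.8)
The plan is to compute the linear functional $\tau_\Gamma(\cdot,G_{c(1)}[-1]\wac,\dots,G_{c(m)}[-1]\wac)$ by ``swapping'' the operators $G_{c(j)}[-1]$ off the Bethe-root tensor factors, one at a time, into the spin-chain factors, using the $\Gamma$-equivariant rational functions of the form $\sum_{k\in\Z_T}\frac{\check\sigma^{k}(G_{c(j)})}{\omega^{-k}t - w_j}\in(\nG)_{\bm x}^\Gamma$. The key point is that, by the version of Corollary \ref{ginvcor} for the ``right'' copy $(\nG)_{\bm x}^\Gamma$ (valid here since $\chi_0=\lambda_0/t+\mc O(t^0)$, by the last Proposition of \S\ref{sec: origin}), the functional $\tau_\Gamma(\cdot\otimes v_{\chi_0})$ is invariant under $(\nG)_{\bm x}^\Gamma$, so these swaps are legitimate. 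The operators $G_i$ act on each $\widetilde W_{\chi_\ell}\cong_\g M^*_{\lambda_\ell}$ as the (sign-adjusted) right action of $E_{\alpha_i}\in\n$, which by \eqref{PRrel} and the contragredient-dual identification translates, on the restricted dual $M_{\lambda_\ell}$, into the lowering operator $F_{\alpha_i}$. Thus each swap of a $G_{c(j)}[-1]$ either lands on one of the $w_k$-factors (producing a term with pole at $\omega^{k_j}w_j-\omega^{k_k}w_k$ in the denominator, which after re-expansion contributes to a nested denominator chain) or lands on one of the $z_i$-factors (producing the final factor $\omega^{k}w_j - z_i$), exactly reproducing the nested-fraction structure of the claimed formula, with each $G$ replaced by $\check\sigma^{k}(F)=\omega^{k}\sigma^{k}(F)$ because of the $\omega^{-k}$ in the swap function.

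Concretely, I would proceed by induction on $m$. For the base case $m=0$ the functional is just $\tau_\Gamma(\wac^{\otimes p})=1$ and the formula reads $\psi_\Gamma = \bigotimes_i \mathsf v_{\lambda_i}$, correctly. For the inductive step, take the outermost (say last) Bethe root $w_m$, swap $G_{c(m)}[-1]$ off its factor; the self-interaction terms vanish because $\deg R_{c(m)}^\beta=\beta-\alpha_{c(m)}$ forbids $a^*_{\alpha}[0]$-type contributions (as noted in the proof of Proposition \ref{prop: Mshift} and the subsequent one), and the pole at the origin is absent for the same reason. This leaves a sum over $i\in\{1,\dots,N\}$ (the target spin site) and over $k\in\Z_T$ (the group element), with a coefficient $1/(\omega^k w_m - z_i)$ coming from the Taylor expansion at $z_i$ of the swap function, and with $\check\sigma^{k}(F_{c(m)})$ inserted at site $i$. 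The remaining functional is of the same shape but with $m-1$ Bethe roots — except that the nested-denominator chain attached to $w_m$ must be assembled correctly: one uses the partial-fraction identity $\frac{1}{(\omega^a w_a - \omega^b w_b)(\omega^b w_b - z_i)}$-type telescoping, exactly as in the untwisted Schechtman–Varchenko computation, to reorganize the sum over which earlier root ``feeds into'' $w_m$. This diagrammatic bookkeeping is carried out in detail in Appendix \ref{app:SV}, following \cite{SV}; I would invoke those computations rather than reproduce them.

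The main obstacle is the combinatorial identity that reorganizes the iterated residues into the sum over ordered partitions $\bm n\in P_{m,N}$ with the precise nested-denominator products shown — in the untwisted case this is the content of the Schechtman–Varchenko construction, and here one must additionally track the group-element labels $(k_1,\dots,k_m)\in\Z_T^m$ and verify that the $\omega$'s bundle correctly into $\check\sigma^{k}(F)$ and into the denominators $\omega^{k_{n^i_s}}w_{n^i_s} - \omega^{k_{n^i_{s+1}}}w_{n^i_{s+1}}$. The other point requiring care is the identification $\widetilde W_{\chi_i}\cong_\g M^*_{\lambda_i}$ and the passage to the restricted dual: one must check that under this identification the operator $G_i$ (the ``right'' action of $E_{\alpha_i}$) acts on $M_{\lambda_i}$ precisely as $F_{\alpha_i}$, with no extra scalar, which follows from \eqref{PRrel}, \eqref{NWhom} and the Cartan anti-involution \eqref{iotadef}. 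Finally one notes $\mathsf v_{\lambda_i}(\wac)=1$ pins down all normalizations, so the overall sign $(-1)^m$ is just the product of the $m$ sign flips $G_i\leftrightarrow -F_{\alpha_i}$ introduced in \S\ref{Gsec}.
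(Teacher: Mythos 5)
Your proposal is correct and follows essentially the same route as the paper: invariance of $\tau_\Gamma$ under $(\nG)_{\bm x}^\Gamma$ (granted by the choice $\chi_0=\lambda_0/t+\mc O(t^0)$), swapping of the $G_{c(j)}[-1]$'s at the Bethe points, the circle-lemma combinatorics of Appendix \ref{app:SV} (Corollary \ref{SVcor}) to reorganize into ordered partitions, and then \eqref{PRrel} together with the Cartan anti-involution \eqref{iotadef} to trade each grade-zero $G$ for $-F$ acting on $\mathsf{v}_{\lambda_i}$, producing the sign $(-1)^m$. The only points to tighten are minor: the swap function itself is $\sum_{k\in\Z_T}\sigma^k(G_{c(j)})/(\omega^{-k}t-w_j)$ without the extra $\omega^k$ (the factors $\omega^k$ that assemble into $\check\sigma$ arise from rewriting $1/(w_j-\omega^{-k}x)=\omega^k/(\omega^k w_j-x)$), and $G_i$ agrees with $-E_{\alpha_i}$ only after pairing against $\mathsf{v}_{\bm\lambda}$ — one needs the invariance statement $\tau_\Gamma(a^*_\alpha[0]^{(i)}w,\wac,\dots,\wac)=0$ to identify $\tau_\Gamma(\cdot,\wac,\dots,\wac)$ with $\mathsf{v}_{\bm\lambda}$, exactly as the paper does.
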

\begin{proof}
Let $v\in \bigotimes_{i=1}^N\widetilde W_{\chi_i}$.
For each $s$, $1\leq s\leq m$, the map $\tau_\Gamma$ is invariant under, in particular, the rational function
\be \sum_{k\in \Z_T}\frac{\sigma^kG_{c(s)}}{\omega^{-k}t-w_s}\in (\nG)_{\bm x}^\Gamma.\ee
(See the comment following Corollary \ref{ginvcor}.) Consequently we have
\begin{align} &\tau_\Gamma(v,G_{c(1)}[-1]\wac,\dots,G_{c(s-1)}[-1]\wac,G_{c(s)}[-1]\wac,\wac,\dots,\wac)\nn\\
 &= \sum_{i=1}^N\sum_{k\in \Z_T}\frac{\tau_\Gamma (\sigma^k(G_{c(s)})[0]^{(i)} v , G_{c(1)}[-1]\wac,\dots,G_{c(s-1)}[-1]\wac,\wac,\wac,\dots,\wac)}{w_s-\omega^{-k}z_i} \nn\\
& + \sum_{j=1}^{s-1} \sum_{k\in \Z_T} \frac{1}{w_s-\omega^{-k}w_j} 
  \tau_\Gamma(v,G_{c(1)}[-1]\wac,\dots,G_{c(j-1)}[-1]\wac,\nn\\
&\qquad\qquad\qquad\qquad\qquad\qquad\qquad  [\sigma^k(G_{c(s)}),G_{c(j)}][-1]\wac,\nn\\
&\qquad\qquad\qquad\qquad\qquad\qquad\qquad\qquad G_{c(j+1)}[-1]\wac,\dots,G_{c(s-1)}[-1]\wac,\wac,\wac,\dots,\wac)\nn\end{align}
using $\sigma^k(G_{c(s)})[0]\wac = 0$ and $\sigma^k(G_{c(s)})[0] G_{c(j)}[-1]\wac = [ \sigma^k(G_{c(s)}),G_{c(j)}][-1]\wac$. We are then effectively in the setting of Corollary \ref{SVcor} of Appendix \ref{app:SV}. Applying this corollary gives 
\be \tau_\Gamma(v, G_{c(1)}[-1] \wac, \dots, G_{c(m)}[-1]\wac) 
  = \tau_\Gamma( \widetilde v,\wac,\dots,\wac)\ee
where
\begin{align} \widetilde v := 
 \!\!\!\!\!\! \sum_{\substack{\bm n\in P_{m,N}\\  (k_1,\dots,k_m) \in \Z_T^m}}
  \bigotimes_{i=1}^N \frac{ \check\sigma^{k_{n^i_1}}(G_{c(n^i_{1})})[0]\check\sigma^{k_{n^i_2}}(G_{c(n^i_2)})[0]\dots \check\sigma^{k_{n^i_{p_{i}-1}}}(G_{c(n^i_{p_{i}-1})})[0] \check\sigma^{k_{n^i_{p_i}}}(G_{c(n^i_{p_i})})[0] v_{\lambda_i}}
   {\Big(\omega^{k_{n^i_1}} w_{n^i_1} - \omega^{k_{n^i_2}} w_{n^i_2}\Big)\dots
    \Big(\omega^{k_{n^i_{p_{i}-1}}}w_{n^i_{p_{i}-1}} - \omega^{k_{n^i_{p_i}}} w_{n^i_{p_i}}\Big)
        \Big(\omega^{k_{n^i_{p_i}}}w_{n^i_{p_i}} -   z_i\Big)  }.
\nn\end{align}
Now note that 
\be \tau_{\Gamma}(a^*_\alpha[0]^{(i)}w,\wac,\dots,\wac) = 0, \label{asinv}\ee 
for all $i$, $1\leq i\leq N$, and all $\alpha\in \Delta^+$ and $w\in \bigotimes_{i=1}^N\widetilde W_{\chi_i}$. This follows from the invariance of $\tau_\Gamma$ under the $\Gamma$-equivariant rational function
\be \sum_{k\in \Z_T} \frac{\lsigma^k E^*_\alpha}{t-\omega^kx_i}\in \Heis_{\bm x}^\Gamma,\ee
the leading term of whose expansion at $x_i$ is $E^*_{\alpha}\otimes (t-x_i)^{-1} = a^*_{\alpha}[0]^{(i)}$, cf. \eqref{genid} and \eqref{Hxdef}. (All terms in the expansions at other points $x_j$, $j\neq i$, and the subleading terms with $j=i$, are of the form $\sigma^kE^*_\alpha\otimes (t-x_j)^p= \sigma^ka^*_\alpha[p+1]^{(j)}$ with $p\in \Z_{\geq 0}$ and therefore annihilate $w$ and $\wac$.)
Consequently for all $w\in \bigotimes_{i=1}^N\widetilde W_{\chi_i}\cong_\g \bigotimes_{i=1}^N M^*_{\lambda_i}$,
\be \tau_\Gamma(w,\wac,\dots,\wac) = \mathsf{v}_{\bm\lambda}(w) \ee
where $\mathsf{v}_{\bm\lambda}:=\bigotimes_{i=1}^N \mathsf{v}_{\lambda_i}\in M_{\bm \lambda}$. In view of \eqref{PRrel}, and noting that $\mathsf{v}_{\bm\lambda}(x^{(i)}_\beta w) = 0$ for all $\beta\in \Delta^+$, we have
\be \mathsf{v}_{\bm\lambda}(G_\alpha^{(i)} w) = - \mathsf{v}_{\bm\lambda}(E_\alpha^{(i)} w) = - (F_{\alpha}^{(i)} \on \mathsf{v}_{\bm\lambda})(w) \ee
for all $w\in \bigotimes_{i=1}^N M^*_{\lambda_i}$; here the second equality is by \eqref{iotadef}. The result follows.
\end{proof}

\subsection{Eigenvectors of the cyclotomic Gaudin Hamiltonians}\label{sec:evecs}
Recall from \S\ref{wc} the induced representation $\pi_0$ of the commutative Lie algebra $\h\otimes\C((t))$. This representation has the structure of a commutative algebra, $\pi_0\simeq \C[ b_i[-n]]_{i\in I;\,n \in \Z_{\geq 1}}$, and there is a derivation $\del:\pi_0\to\pi_0$ defined by $\del b_i[-n]= n \, b_i[-n-1]$. 
We also have the commutative algebra $\C^\8_{\Gamma\bm x}(u)$ of functions in a variable $u$ (which we may treat as formal) that vanish at infinity and have poles at most at the points $\omega^kx_i$, $k\in \Z_T$, $1\leq i\leq p=N+m$. The derivative $\del_u=\del/\del u$ acts as a derivation on  $\C^\8_{\Gamma\bm x}(u)$ and there is then a homomorphism of differential algebras
\begin{equation} r_\Gamma: (\pi_0,\del) \to \left(\C^\8_{\Gamma\bm x}(u),\del_u\right)\end{equation}
given by, cf. \eqref{lambdadef},
\begin{equation} 
r_{\Gamma}(b_s[-1]) := \lambda(u)(H_{s})  = \frac{\langle \lambda_0,\alpha^\vee_s\rangle}{u} + \sum_{k\in \Z_T} \sum_{i=1}^N \frac{\langle \lsigma^k\lambda_{i},\alpha^\vee_s\rangle}{u-\omega^kz_i} 
- \sum_{k\in \Z_T} \sum_{j=1}^m \frac{\langle\lsigma^k\alpha_{c(j)},\alpha^\vee_s\rangle}{u-\omega^kw_j}.
\end{equation}
That is, explicitly,
\begin{equation} r_\Gamma(b_{s_1}[-n_1]\dots b_{s_M}[-n_M]) = \prod_{k=1}^M \frac{1}{(n_k-1)!}\left(\frac{\del}{\del u}\right)^{n_k-1}  \lambda(u)(H_{{s_k}}). \label{eval}
\end{equation}
\begin{thm}\label{evalthm}
If the $w_j$, $1\leq j\leq m$ satisfy the cyclotomic Bethe equations \eqref{tbe} then for all $Z\in \mf Z(\gh)$, the cyclotomic weight function $\psi_\Gamma=\psi_\Gamma(c(1),\dots,c(m);w_1,\dots,w_m)\in M_{\bm \lambda}$ is an eigenvector of $\iota(Z(u))$ with eigenvalue $(r_\Gamma\circ\rho)(Z(u))$.
\end{thm}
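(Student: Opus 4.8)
The plan is to relate the action of $\iota(Z(u))$ on the cyclotomic weight function to the action of $Z(u)$ inside the big space of coinvariants, and then push everything through the Wakimoto realization and the functoriality result (Theorem \ref{thm:coinv}) so that the computation reduces to the known action on a Heisenberg--Cartan module, where it becomes essentially the evaluation map $r_\Gamma\circ\rho$. Concretely, recall from \S\ref{sec:bv} that $\psi_\Gamma$ is obtained by restricting the unique (up to scale) $\Heis_{\bm x}^\Gamma\oplus \h_{\bm x}^\Gamma$-invariant functional $\tau_\Gamma$ on $\bigotimes_{i=1}^N W_{\chi_i}\otimes \bigotimes_{j=1}^m W_{\mu_j}\otimes \C v_{\chi_0}$ to the grade-zero subspace $\bigotimes_{i=1}^N\widetilde W_{\chi_i}\cong_\g \bigotimes_{i=1}^N M^*_{\lambda_i}$ and evaluating against the singular vectors $G_{c(j)}[-1]\wac$ of the Wakimoto modules attached to the $w_j$. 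The first step is therefore to observe that $\tau_\Gamma$, being $\Heis_{\bm x,0}^\Gamma\oplus\h_{\bm x,0}^\Gamma$-invariant and paired with the vector $v_{\chi_0}$ satisfying the hypotheses checked in \S\ref{sec: origin}, is by Corollary \ref{ginvcor} automatically $\g_{\bm x}^\Gamma$-invariant (and, by the remark following that corollary, $(\nG)_{\bm x}^\Gamma$-invariant). This is what lets us treat $\tau_\Gamma$ as a coinvariant-type functional for the Lie algebra $\g_{\bm x}^\Gamma$ of \S\ref{ZTGsec}, so that the cyclotomic swapping procedure of \S\ref{sec:twistedswapping} applies.

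The second step is to add the point $u$ (with its copy of $\Vcrit$) and compute $\iota(Z(u))\on\psi_\Gamma$ by the cyclotomic swapping identity \eqref{twistedswap}. Because $\psi_\Gamma$ lies in the restricted dual, the contragredient action of $Z(u)$ is computed by swapping $Z\in\Vcrit$ off the point $u$ onto the spin-chain factors using $\g_{\bm x}^\Gamma$-invariance of $\tau_\Gamma$; since $Z$ is singular, precisely the argument of Proposition \ref{prop:Zcom} shows that the terms which would act on the Wakimoto modules at the $w_j$ or the module at the origin vanish. Now apply $\rho$: the key point is that $\rho$ maps $\mf Z(\gh)$ into $\pi_0\subset\WW_0$, and is $\sigma$-equivariant (by \cite{Szc}, as recalled in \S\ref{wc}), so under the functoriality of Theorem \ref{thm:coinv} the element $\rho(Z)(u)$ lies in $\pi_0$ applied at $u$, i.e. it is built from the $b_i[-n]$. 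The action of $b_i[-n]$ on $\tau_\Gamma$ via swapping with $\Gamma$-equivariant $\h$-valued rational functions produces exactly the partial-fraction expression $\lambda(u)(H_i)$ and its $u$-derivatives, which is the definition of $r_\Gamma$ in \eqref{eval}; here the module $\C v_\chi$ with $\chi=\lambda(t)$ from \eqref{lambdadef}--\eqref{chimudef} is what feeds in the residues at the $z_i$, the $w_j$ and the origin. The cyclotomic Bethe equations \eqref{tbe} enter because they are exactly the conditions (from \S\ref{sec:wakm}, the singular-vector criterion for $G_{c(j)}[-1]\wac\otimes v_{\mu_j}$) that make the swapping of $\rho(Z)$ past the $w_j$-factors not generate extra non-scalar contributions — equivalently, that the vectors $G_{c(j)}[-1]\wac$ remain singular so that $\tau_\Gamma$ evaluated on them behaves coherently under the action of $\rho(Z)(u)\in\pi_0$.

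Putting these together: $\iota(Z(u))\on\psi_\Gamma$ equals $\tau_\Gamma$ evaluated on the same collection of singular Wakimoto vectors but with $(r_\Gamma\circ\rho)(Z(u))$ acting as a scalar multiplier (a rational function of $u$) on the $\pi_0$-part, which commutes out of the functional because it acts by the central/scalar action inherited from the one-dimensional $\h$-modules $\C v_{\chi_i}$, $\C v_{\mu_j}$, $\C v_{\chi_0}$ glued along $\lambda(t)$. Hence $\iota(Z(u))\on\psi_\Gamma = (r_\Gamma\circ\rho)(Z(u))\,\psi_\Gamma$, which is the claim. I expect the main obstacle to be the bookkeeping in the second step: making precise that, after invoking Theorem \ref{thm:coinv} to pass from $\g_{\bm x}^\Gamma$-invariance to $\Heis_{\bm x,0}^\Gamma\oplus\h_{\bm x,0}^\Gamma$-invariance, the swapping of a general singular vector $Z$ (not just the quadratic $S$) — with all its ``self-interaction'' correction terms from Remark \ref{sirem} and all the $a_\alpha,a^*_\alpha$-swaps from the Wakimoto image of $Z$ — collapses, using singularity of $Z$ together with the Bethe equations, to the purely $\pi_0$-valued, scalar answer $r_\Gamma(\rho(Z)(u))$. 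The conceptual inputs are all in place (Proposition \ref{prop:Zcom}, Theorem \ref{thm:coinv}, $\sigma$-equivariance of $\rho$, the singular-vector criterion of \S\ref{sec:wakm}); the work is in organizing the iterated swaps so that the cancellations are manifest.
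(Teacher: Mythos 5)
Your outline follows the paper's strategy in broad terms (auxiliary point $u$, invariance via Corollary \ref{ginvcor}, the fact that $\rho(\mf Z(\gh))\subset\pi_0$, the singular-vector criterion, and the evaluation map $r_\Gamma$), but two steps are misassigned in a way that breaks the logic as written. First, the module placed at $u$ must be the Heisenberg--Cartan module $\WW_0$, not a copy of $\Vcrit$: the functional $\tau$ is only defined on smooth $\Heis(\g)\oplus\h\otimes\C((t))$-modules, so the singular vector $Z$ can only enter the computation through its image $\rho(Z)\in\pi_0\subset\WW_0$. The proof then evaluates the single number $\tau\big(v,G_{c(1)}[-1]\wac,\dots,G_{c(m)}[-1]\wac,\rho(Z)\big)$ in two ways: once by swapping from $u$ using $\g_{\bm x,u}^\Gamma$-invariance (Corollary \ref{ginvcor}, with $\WW_0$ viewed as a $\gh$-module through $\rho$), which gives $\big(\iota(Z(u))\on\psi_\Gamma\big)(v)$; and once by swapping from $u$ using $\h_{\bm x,u}^\Gamma$-invariance, which, since $\rho(Z)$ is a polynomial in the $b_s[-n]$ and these act on $W_{\chi_i}$, $W_{\mu_j}$ and $\C v_{\chi_0}$ by the scalars prescribed by $\lambda(t)$ (equation \eqref{le}), gives $(r_\Gamma\circ\rho)(Z(u))\,\psi_\Gamma(v)$. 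This two-way evaluation is exactly what dissolves the ``bookkeeping obstacle'' you flag at the end: one never has to track the swaps of the $a_\alpha,a^*_\alpha$-content of the Wakimoto image of a general $Z$, because only the $\pi_0$-valued state $\rho(Z)$ ever sits at $u$; your version, which first swaps $Z$ off a $\Vcrit$-copy and only afterwards ``applies $\rho$'', leaves that obstacle unresolved and is not literally meaningful, since $\tau_\Gamma$ has no $\Vcrit$-slot.

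Second, you have interchanged the roles of the Bethe equations and of the singularity of $Z$. The claim ``since $Z$ is singular, \dots the terms which would act on the Wakimoto modules at the $w_j$ \dots vanish'' is not correct: in the argument of Proposition \ref{prop:Zcom} it is the singularity of the vector sitting at the \emph{other} point that kills the swapped terms, and here that vector is $G_{c(j)}[-1]\wac\in W_{\mu_j}$, which is singular for $\gh_{(w_j)}$ precisely when the $j$-th cyclotomic Bethe equation \eqref{tbe} holds (the criterion \eqref{Gsing}). Hence the Bethe equations are needed in the first (the $\g$-) swap, not, as you state, to control the swapping of $\rho(Z)$: in the $\h$-swap they play no role, since the $b_s[-n]$ act as scalars on the one-dimensional modules $\C v_{\mu_j}$ by construction. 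The singularity of $Z$ is used only through $\rho(\mf Z(\gh))\subset\pi_0$ (and to have $Z(u)$ in the cyclotomic Gaudin algebra in the first place). With the module at $u$ corrected to $\WW_0$ and these roles reassigned, your argument becomes the paper's proof.
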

\begin{proof}
As in \S\ref{sec:twistedswapping}, we introduce an additional non-zero point $u$ whose $\Gamma$-orbit is disjoint from those of the $x_i$, $1\leq i\leq p$. To the point $u$ we assign a copy of the $\Heis(\g)\oplus \h\otimes\C((t-u))$-module $\WW_0=\Mh\otimes\pi_0$, cf. \S\ref{wc}. There is a unique $\Heis_{\bm x,u}^\Gamma\oplus\h_{\bm x,u}^\Gamma$-invariant linear functional 
\be \tau:\bigotimes_{i=1}^N W_{\chi_i} \otimes \bigotimes_{j=1}^m W_{\mu_j}\otimes \WW_0 \otimes \C v_{\chi_0}\to \C\ee 
normalized such that $\tau(\wac^{\otimes(N+m+1)})=1$, cf. \eqref{taugdef}.

Recall the embedding $\rho:\Vcrit\to \WW_0$ from \eqref{rhodef} and consider 
\be \tau( v, G_{c(1)}[-1]\wac,\dots,G_{c(m)}[-1]\wac, \rho(Z)).\ee 
This quantity can be evaluated in two ways. 

First, by Corollary \ref{ginvcor}, the functional $\tau$ is $\g_{\bm x,u}^{\Gamma}$-invariant. We can therefore ``swap'' from $u$ in the same manner as in \S\ref{sec:twistedswapping}. Now if the $w_j$, $1\leq j\leq m$, satisfy the cyclotomic Bethe equations \eqref{tbe} then for each $j$, $1\leq j\leq m$, the vector $G_{c(j)}[-1]\wac\in W_{\mu_j}$ is singular for the action of $\gh_{(w_j)}$. Therefore
\begin{align} 
\tau( v, G_{c(1)}[-1]\wac,\dots,G_{c(m)}[-1]\wac, \rho(Z) ) 
&= \tau( Z(u)\on v, G_{c(1)}[-1]\wac,\dots,G_{c(m)}[-1]\wac,\wac) \nn\\
&= \tau_\Gamma( Z(u)\on v, G_{c(1)}[-1]\wac,\dots,G_{c(m)}[-1]\wac) \nn\\
&= \psi_\Gamma( Z(u)\on v ) = \left(\iota(Z(u))\on \psi_{\Gamma}\right)(v).
\end{align}

But at the same time, it is known \cite{FFR} that $\rho(\mf Z(\gh)) \subset \pi_0$. That means $\rho(Z)$ can be written as a linear combination of terms of the form $b_{s_1}[-n_1]\dots b_{s_M}[-n_M]$, $s_1,\dots,s_M\in I$, $n_1,\dots,n_M\in \Z_{\geq 1}$. By using the $\h_{\bm x,u}^\Gamma$ invariance of $\tau$ and ``swapping'' using the functions of the form
\be \frac{1}{(n-1)!}\left(\frac{\del}{\del u}\right)^{n-1} \sum_{r\in \Z_T} \frac{\sigma^rb_s}{\omega^{-r} t- u} \in \h_{\bm x,u}^\Gamma\ee
one has, by definition of the Wakimoto modules $W_{\chi_i}$ and $W_{\mu_j}$, \S\ref{sec:wakm}, that
\be \tau(v, G_{c(1)}[-1]\wac,\dots,G_{c(m)}[-1]\wac, b_s[-n] w) 
= r_\Gamma(b_s[-n]) \tau(v, G_{c(1)}[-1]\wac,\dots,G_{c(m)}[-1]\wac, w). \label{le}\ee
The result follows.
\end{proof}

\section{Examples and special cases}\label{sec:examples}

\subsection{Eigenvalues of the cyclotomic quadratic Hamiltonians}
Recall the quadratic Casimir $\mc C=\half I^aI_a\in Z(U(\g))$ and corresponding singular vector $S\in \mf Z(\gh)$ from \eqref{Sdef}.
We must express these in terms of the Cartan-Weyl basis of \eqref{CartanWeyl}.
The dual basis is  
\begin{equation} 
E^\alpha = \frac{F_\alpha}{\langle E_\alpha,F_\alpha\rangle} = \frac{\langle \alpha, \alpha \rangle} 2 F_\alpha,\quad F^\alpha = \frac{\langle \alpha,\alpha\rangle} 2 E_\alpha,\quad
\text{and}\quad 
 H^i = \sum_{j\in I} \langle \omega_i,\omega_j\rangle H_{j},
\end{equation}
where $\{\omega_i\}_{i\in I}\subset \h^*\cong \h$ are the fundamental weights of $\g$. 
To see these, note that
$\langle H_\alpha,H\rangle = \langle [E_\alpha,F_\alpha], H\rangle = \alpha(H) \langle F_\alpha,E_\alpha\rangle$, by invariance of $\langle\cdot,\cdot\rangle$, so that $\langle E_\alpha,F_\alpha \rangle = \langle H_\alpha,H_\beta\rangle/\alpha(H_\beta)= \langle \alpha^\vee,\beta^\vee\rangle/\langle \alpha,\beta^\vee \rangle = 2/\langle \alpha,\alpha\rangle$. And $\langle\omega_i,\omega_j\rangle$ is the inverse matrix to $\langle H_i, H_j\rangle = \frac 1 {h^\vee} \tr_\n \ad_{H_i} \ad_{H_j}$: indeed, by definition $\delta_{ij} = \langle \omega_j ,\alpha^\vee_i\rangle$; hence $\alpha^\vee_i = \sum_{k\in I}\omega_k \langle \alpha^\vee_k, \alpha^\vee_i\rangle$ and so $\delta_{ij} = \sum_{k\in I} \langle \omega_j,\omega_k\rangle \langle \alpha^\vee_k,\alpha^\vee_i\rangle$. 

Recall the singular vector $S\in \mf Z(\gh)$ from \eqref{Sdef}. It is known that (see, e.g., \cite[\S 8.1.4]{Fre07})
\begin{equation*}
\rho(S) = \half \sum_{s,t\in I} \langle\omega_s,\omega_t\rangle b_s[-1] b_t[-1] \wac
- \sum_{\alpha\in \Delta^+}\frac{\langle\alpha,\alpha\rangle} 4  b_\alpha[-2]\wac.
\end{equation*}
Hence, with $r_\Gamma$ as in \eqref{eval}, we have
\begin{align} (r_\Gamma\circ\rho)(S) &= 
\half \sum_{s,t\in I} \langle\omega_s,\omega_t\rangle \langle \lambda(u),\alpha^\vee_s\rangle\langle \lambda(u),\alpha^\vee_t\rangle - \sum_{\alpha\in \Delta^+} \frac{\langle\alpha,\alpha\rangle} 4  \lambda'(u)(H_\alpha) \notag\\
&= \half \langle \lambda(u),\lambda(u) \rangle - \langle \lambda'(u) , \varrho \rangle \label{rS}
\end{align}
where $\varrho:= \half\sum_{\alpha\in \Delta^+}\alpha$. 

In view of \eqref{lambdadef} we therefore find the following expression for the eigenvalue $E_i$ of the cyclotomic quadratic Gaudin Hamiltonians $\mathcal{H}_i$ defined in \eqref{tildeHdef}:
\be \left(\mathcal{H}_i-E_i \right) \psi_\Gamma(c(1),\dots,c(m);w_1,\dots,w_m) = 0,\ee
\begin{equation} \label{evalue Hi}
E_i := \sum_{\substack{j=1\\j\neq i}}^N \sum_{s=0}^{T-1} \frac{\langle \lambda_i,\lsigma^s \lambda_j\rangle}{z_i-\omega^sz_j} 
- \sum_{j=1}^m \sum_{s=0}^{T-1} \frac{\langle \lambda_i,\lsigma^s \alpha_{c(j)} \rangle}{z_i-\omega^sw_j}
+ \frac{1}{z_i} \left( \langle \lambda_i, \lambda_0\rangle
+ \frac{1}{2} \sum_{s=1}^{T-1} \langle \lambda_i,\lsigma^s \lambda_i\rangle \right).
\end{equation}
The second term in brackets originates from an expression of the form $\sum_{s=1}^{T-1} \frac{\langle \lambda_i,\lsigma^s \lambda_i\rangle}{1-\omega^s}$ which can be rewritten as $\sum_{s=1}^{T-1} \frac{\langle \lambda_i,\lsigma^s \lambda_i\rangle}{1-\omega^{-s}}$ and hence simplifies to $\sum_{s=1}^{T-1} \frac{\langle \lambda_i,\lsigma^s \lambda_i\rangle}{2}$.

We also find certain identities, by comparing the double pole terms in $S(u)$.  

First, at the double pole at $u=\omega^kz_i$ we find the correct value $\Delta(\lambda_i)$ of the quadratic Casimir $\mc C = \half I^aI_a$ on the Verma module $M_{\lambda_{i}}$ -- which one recognises from, for example, the Freudenthal multiplicity formula --  
\begin{equation} \label{c lambda i}
\Delta(\lambda_i) =   \half \langle \lambda_i,\lambda_i \rangle + \langle \lambda_i, \varrho \rangle,
\end{equation}
(note that $\Delta(\lambda_i) = \half \langle \lsigma^k\lambda_i,\lsigma^k\lambda_i \rangle + \langle \lsigma^k\lambda_i, \varrho \rangle$
for all $k\in \Z_T$). 

More non-trivially, from the double pole at $u=0$, we find the identity
\be 
-\frac{h^{\vee}}{2} \sum_{r=1}^{T-1} \frac{\omega^r \langle \sigma^r I^a, I_a \rangle}{(\omega^r - 1)^2} = 
\half \langle\lambda_0,\lambda_0\rangle + \langle \lambda_0, \varrho\rangle.
\ee
Here the right-hand side is the value $\Delta(\lambda_0)$ of $\mc C$ on the Verma module $M_{\lambda_0}$.

\subsection{Inner automorphisms (and in particular $\sigma=\id$)}
Suppose the automorphism $\sigma:\g\to\g$ is inner. Then it fixes, pointwise, our choice of Cartan subalgebra, cf.  \eqref{sigmaE}. For each positive root $\alpha\in \Delta^+$ there is a unique number $\chi_\alpha\in\{0,1,\dots,T-1\}$ such that 
\be\label{chidef}\tau_\alpha = \omega^{\chi_{\alpha}}.\ee
(Note, in particular, that $\sigma=\id$ corresponds to $\chi_\alpha=0$ for all $\alpha\in \Delta^+$.)

We have the identities
\be \sum_{r = 1}^{T-1} \frac 1 {1-\omega^r} = \frac{T-1}{2}, \qquad
 \sum_{r = 0}^{T-1} \frac 1{w-\omega^r z} = \frac{Tw^{T-1}}{w^T-z^T}.\ee
In view of these, 
the Bethe equations \eqref{tbe} then become, with $\tilde z_i := z_i^T$ and $\tilde w_i= w_i^T$,
\be 0= \sum_{i=1}^N\frac{\langle \alpha_{c(j)},\lambda_i\rangle}{\tilde w_j-\tilde z_i} - 
\sum_{\substack{k=1\\k\neq j}}^m \frac{\langle \alpha_{c(j)},\alpha_{c(k)}\rangle}{\tilde w_j-\tilde w_k}
+
\frac{1}{T} \frac1 {\tilde w_j} \sum_{r = 1}^{T-1} 
\frac{1}{\omega^r-1} \left(\langle \alpha_{c(j)},\alpha_{c(j)}\rangle 
- \sum_{\alpha\in \Delta^+} \omega^{- \chi_\alpha r} \langle  \alpha_{c(j)},\alpha\rangle \right)
\ee
for $j=1,\dots, m$.
Recalling that $\varrho = \half \sum_{\alpha\in \Delta^+}\alpha = \sum_{i\in I} \omega_i$, one has 
\be \sum_{\alpha\in \Delta^+}\langle \alpha,\alpha_{c(j)}\rangle = 2\sum_{k\in I} \langle \omega_k, \alpha_{c(j)}\rangle = 
\langle \alpha_{c(j)},\alpha_{c(j)}\rangle \sum_{k\in I} \langle \omega_k, \alpha^\vee_{c(j)}\rangle =  \langle \alpha_{c(j)},\alpha_{c(j)}\rangle.\nn\ee
Therefore 
if in fact $\sigma=\id$ then the term in $1/\tilde w_j$ actually vanishes and the Bethe equations become
\begin{equation} \label{untwisted BAE}
0= \sum_{i=1}^N\frac{\langle \alpha_{c(j)},\lambda_i\rangle}{\tilde w_j-\tilde z_i} 
-
\sum_{\substack{k=1\\k\neq j}}^m \frac{\langle \alpha_{c(j)},\alpha_{c(k)}\rangle}{\tilde w_j-\tilde w_k},
\end{equation}
for $j=1,\dots,m$. These are nothing but the Bethe equations for the \emph{usual} Gaudin model with Verma modules $M_{\lambda_i}$ assigned to the points $\tilde z_i$, with Bethe roots $\tilde w_j$. 
More generally, the Bethe equations when $\sigma$ is purely inner are
\be 0= \sum_{i=1}^N\frac{\langle \alpha_{c(j)},\lambda_i\rangle}{\tilde w_j-\tilde z_i} - 
\sum_{\substack{k=1\\k\neq j}}^m \frac{\langle \alpha_{c(j)},\alpha_{c(k)}\rangle}{\tilde w_j-\tilde w_k}
-
\frac{1}{\tilde w_j}  \sum_{\alpha\in\Delta^+} \frac{[\chi_\alpha]}{T} 
  \langle  \alpha_{c(j)},\alpha\rangle,
\ee
where $[k]:= k \mod T\in \{0,1,\dots,T-1\}$ for any $k \in \Z$ and we note that $\sum_{r = 1}^{T-1} \frac{\omega^{- k r} - 1}{\omega^r-1} = [k]$. These again can be interpreted as the Bethe equations for an ordinary Gaudin model, but now with an additional Verma module assigned to the origin.

Let us turn to the cyclotomic weight function $\psi_\Gamma$ of Proposition \ref{psiprop}. When $\sigma$ is inner, $\psi_\Gamma$ is a $\g$-weight vector. Indeed,  
\begin{align} \psi_\Gamma = 
 (-1)^m\!\!\!\!\!\! \sum_{\substack{\bm n\in P_{m,N}\\  (k_1,\dots,k_m) \in \Z_T^m}}
  \bigotimes_{i=1}^N 
\frac{ \omega^{k_{n^i_1}\big(1-\chi_{\alpha_{c(n^i_1)}}\big) + k_{n^i_2}\big(1-\chi_{\alpha_{c(n^i_2)}}\big)+  \dots  + k_{n^i_{p_i}}\big(1-\chi_{\alpha_{c(n^i_{p_i})}}\big)}  
   F_{c(n^i_1)}F_{c(n^i_2)}\dots F_{c(n^i_{p_i})} \mathsf v_{\lambda_i}}
   {\Big(\omega^{k_{n^i_1}} w_{n^i_1} - \omega^{k_{n^i_2}} w_{n^i_2}\Big)\dots
    \Big(\omega^{k_{n^i_{p_{i}-1}}}w_{n^i_{p_{i}-1}} - \omega^{k_{n^i_{p_i}}} w_{n^i_{p_i}}\Big)
        \Big(\omega^{k_{n^i_{p_i}}}w_{n^i_{p_i}} -   z_i\Big)  }.
\nn\end{align}
and by repeatedly applying the identity,
\begin{equation*}
\sum_{k\in \Z_T} \frac {\omega^{-rk}}{\omega^ku- v} = \frac{Tv^{T-1-[r]}u^{[r]}}{u^T-v^T},
\end{equation*}
one has 
\begin{align} \psi_\Gamma& = 
 (-1)^mT^m   \sum_{\bm n\in P_{m,N}} \bigotimes_{i=1}^N 
\frac{f\big(w_{n^i_1},\dots,w_{n^i_{p_i}},z_i; \chi_{\alpha_{c(n^i_1)}}, \dots, \chi_{\alpha_{c(n^i_{p_{i}})}}\big)}
 {\big(\tilde w_{n^i_1} -\tilde w_{n^i_2}\big)\dots
    \big(\tilde w_{n^i_{p_{i}-1}} - \tilde w_{n^i_{p_i}}\big)
        \big(\tilde w_{n^i_{p_i}} -   \tilde z_i\big)  }F_{c(n^i_1)}F_{c(n^i_2)}\dots F_{c(n^i_{p_i})} \mathsf v_{\lambda_i}\nn\end{align}
where
\begin{equation*}
f(w_1,\dots,w_p,z;\chi_1,\dots,\chi_p) := w_1^{[\chi_1-1]} \left(\prod_{s=2}^p w_s^{T-1-[\chi_1+\dots+\chi_{s-1}-1] + [\chi_1+\dots+\chi_s-1]}\right) z^{T-1-[\chi_1+\dots+\chi_p-1]}.
\end{equation*}

In particular if $\sigma = \id$ then
\begin{align} \psi_\Gamma& = 
 (-1)^mT^m (w_1\dots w_m)^{T-1}  \sum_{\bm n\in P_{m,N}} \bigotimes_{i=1}^N 
\frac{F_{c(n^i_1)}F_{c(n^i_2)}\dots F_{c(n^i_{p_i})} \mathsf v_{\lambda_i}}
 {\big(\tilde w_{n^i_1} -\tilde w_{n^i_2}\big)\dots
    \big(\tilde w_{n^i_{p_{i}-1}} - \tilde w_{n^i_{p_i}}\big)
        \big(\tilde w_{n^i_{p_i}} -   \tilde z_i\big)  }\nn\end{align}
which is the usual Schechtman-Varchenko expression for the weight function in the variables $\tilde w_j$, $1\leq j\leq m$ and $\tilde z_i$, $1\leq i\leq N$, up to the constant overall factor $T^m(w_1\dots w_m)^{T-1}$. 

For the eigenvalues \eqref{evalue Hi}, one finds by use of the same identities that
\begin{equation*}
E_i = T z_i^{T-1} \left( \sum_{\substack{j=1\\j\neq i}}^N \frac{\langle \lambda_i, \lambda_j\rangle}{\tilde{z}_i - \tilde{z}_j}
- \sum_{j=1}^m \frac{\langle \lambda_i, \alpha_{c(j)} \rangle}{\tilde{z}_i - \tilde{w}_j} \right)
+ \frac{1}{z_i} \left( \sum_{\alpha \in \Delta^+} \left( \frac{T - 1}{2} - [\chi_{\alpha}] \right) \langle \alpha, \lambda_i\rangle + \frac{T - 1}{2} \langle \lambda_i, \lambda_i\rangle \right).
\end{equation*}
In particular, when $\sigma = \text{id}$, i.e. $[\chi_{\alpha}] = 0$ for all $\alpha \in \Delta^+$, this reduces to
\begin{equation*}
E_i = T z_i^{T-1} \left( \sum_{\substack{j=1\\j\neq i}}^N \frac{\langle \lambda_i, \lambda_j\rangle}{\tilde{z}_i - \tilde{z}_j}
- \sum_{j=1}^m \frac{\langle \lambda_i, \alpha_{c(j)} \rangle}{\tilde{z}_i - \tilde{w}_j} \right)
+ \frac{T - 1}{z_i} c(\lambda_i),
\end{equation*}
where $c(\lambda_i)$ is defined in \eqref{c lambda i}. The expression in brackets is precisely the eigenvalue of the Gaudin Hamiltonian of the usual Gaudin model with Verma modules $M_{\lambda_i}$ assigned to the point $\tilde{z}_i$, where the Bethe roots $\tilde{w}_i$ satisfy the usual Bethe equations \eqref{untwisted BAE}. This corresponds to the fact that when $\sigma = \text{id}$, the expression \eqref{tildeHdef} can be resumed to yield
\begin{equation*}
\mathcal{H}_i = T z_i^{T-1} \sum_{\substack{j=1\\j \neq i}}^N \frac{I^{a (i)} I_a^{(j)}}{\tilde{z}_i - \tilde{z}_j} + \frac{T-1}{z_i} \frac{1}{2} I^{a (i)} I_a^{(i)},
\end{equation*}
which is nothing but a linear combination of the Hamiltonian for the usual Gaudin model and the quadratic Casimir $\mathcal{C}^{(i)}$.

\subsection{An example of a diagram automorphism.}
The opposite extreme to the situation of the preceding subsection is when $\tau_{\alpha_i}=1$ for all simple roots $\alpha_i$, $i \in I$ in \eqref{sigmaE} and $\sigma\neq \id$ is a non-trivial diagram automorphism. Suppose that we are in this situation, and moreover that $\sigma$ has order 2 (which follows necessarily unless we are in type $D_4$). 
Then the Bethe equations are
\begin{align}
0&= \sum_{i=1}^N\frac{\langle \alpha_{c(j)},\lambda_i\rangle}{w_j-z_i}
 + \sum_{i=1}^N\frac{\langle \alpha_{\sigma(c(j))},\lambda_{i}\rangle}{w_j+ z_i} 
-  \sum_{\substack{k=1\\k\neq j}}^m \frac{\langle \alpha_{c(j)},\alpha_{c(k)}\rangle}{w_j-w_k}
-  \sum_{\substack{k=1\\k\neq j}}^m \frac{\langle \alpha_{\sigma(c(j))},\alpha_{c(k)}\rangle}{w_j+w_k} \nn\\
&\qquad +
\frac{1}{w_j}
\left( -\half\langle \alpha_{c(j)},\alpha_{\sigma(c(j))}\rangle 
+\langle \alpha_{c(j)}, \lambda_0 \rangle \right),
\nn\end{align}
where now $\lambda_0$ reduces to
\begin{equation*}
\lambda_0 = \half \sum_{\substack{\alpha\in \Delta^+\\\sigma(\alpha)=\alpha}} \tau_{\alpha}^{-1} \, \alpha.
\end{equation*}

We shall consider one simple example of this type. Let $\g$ be $\mf a_2$ and $\sigma$ its unique diagram automorphism. Suppose there are $N=2$ sites, to both of which we assign the 3-dimensional defining representation $L_{\omega_1}$ of $\mf {sl}_3\cong\mf a_2$. Recall that $L_{\omega_1}$ is the irreducible quotient of the Verma module $M_{\omega_1}$. The Hamiltonian $\mathcal{H}_1$ defined in \eqref{tildeHdef} is then a $9\times 9$ matrix whose eigenvalues and eigenvectors can be found by direct computation. Its  eigenvalues are
\be
\frac{z_2^2+z_1z_2+2z_1^2}{3z_1^3-3z_1z_2^2},\quad 
\frac{z_2^2-5z_1z_2-4z_1^2}{3z_1^3-3z_1z_2^2},\quad
\frac{z_2^2+10z_1z_2-7z_1^2}{3z_1^3-3z_1z_2^2},
\label{direct evals}\ee
with multiplicities respectively $5$, $3$, $1$. These multiplicities agree with the decomposition of $L_{\omega_1}\otimes L_{\omega_1} 
$ into irreducible representations of the $\mf {sl}_2$ subalgebra stabilized by $\sigma$, which is linearly generated by
\be E:= E_{12} + E_{23},\quad  F:= E_{21}+ E_{32}, \quad H:=[E,F]\label{sl20}\ee 
(the notation $E_{12}$ etc. is that of Example \ref{sl3ex}) and whose positive root is $\alpha:=\half(\alpha_1+\alpha_2)$: 

\be\nn\begin{tikzpicture}[baseline=0,scale=1] 
\draw (0:2) node[right=-15pt]{$\alpha_1$};\draw (120:1.414) node[above left]{$\alpha_2$};
\foreach\x in {0,1,...,5} \draw[->] (0,0) -- (60*\x:1.414);
\foreach\x in {0,1,2} \fill (\x*120+30:2/1.414) circle (2pt);
\foreach\x in {0,1,2}{\fill (\x*120+90:1/1.414) circle (2pt); \draw (\x*120+90:1/1.414) circle (4pt);}
\end{tikzpicture}
\quad\cong_{\mf{sl}_2} \quad
\begin{tikzpicture}[baseline=0,scale=1] 
\foreach\x in {-2,-1,0,1,2} \fill (60:\x*1.414/2) circle (2pt);
\foreach\x in {-1,0,1} \draw (60:\x*1.414/2) circle (4pt);
\draw (60:0) circle (6pt);;
\end{tikzpicture}.
\ee

The $5$ dimensional representation of $\mf{sl}_2$ is the irreducible representation through the vacuum vector $v\otimes v$. 

With $m=1$ and $c(1)=1$ (i.e. one lowering operation, in the direction of the simple root $\alpha_1$) the Bethe equation \eqref{tbe} and its unique solution are
\be 0=\frac 1{w_1-z_1} + \frac 1{w_1-z_2}\implies w_1=\frac{z_1+z_2} 2.\nn\ee
Note here that $-\half\langle\alpha_1,\alpha_{\sigma(1)}\rangle=-\half(-1)=\half$ is cancelled by $\langle\alpha_1,\lambda_0\rangle = -\half\langle\alpha_1,\alpha_1+\alpha_2\rangle = -\frac{2-1}{2} = -\half$ so that in total there is no pole at $w_1$.
With $m=1$ and $c(1)=2$ one has
\be 0=\frac 1{w_1+z_1} + \frac 1{w_1+z_2}\implies w_1=-\frac{z_1+z_2} 2,\nn\ee
which is the same equation with $w_1$ replaced by its twist-image $\omega w_1=-w_1$.
The cyclotomic weight functions for these solutions are the actually proportional, as one expects since there is only one $\mf{sl}_2$ singular vector at this $\mf{sl}_2$ weight\footnote{By analogy with the familiar untwisted case, one expects that the cyclotomic weight function evaluated at any solution to the cyclotomic Bethe equations yields a singular vector for the subalgebra of $\g$ stablized by $\sigma$. We have not proved this here.}. Indeed, when $c(1)=1$ one has
\begin{align} \psi_\Gamma &= \frac{E_{21} v \otimes v}{w_1-z_1} + \frac{-E_{32}v\otimes v}{-w_1-z_1} 
                 +\frac{v \otimes E_{21}v}{w_1-z_2} + \frac{v\otimes(-E_{32} v)}{-w_1-z_2} \nn\\
&= \frac{2}{z_2-z_1} \left( E_{21}v \otimes v - v\otimes E_{21}v\right) 
\nn\end{align}
on substituting the solution to the Bethe equation and noting that $E_{32}v=0$, while when $c(1)=2$,
\begin{align} \psi_\Gamma &= \frac{E_{32} v \otimes v}{w_1-z_1} + \frac{-E_{21}v\otimes v}{-w_1-z_1} 
                 +\frac{v \otimes E_{32}v}{w_1-z_2} + \frac{v\otimes(-E_{21} v)}{-w_1-z_2}\nn\\ 
&=-\frac{2}{z_2-z_1} \left( E_{21}v \otimes v - v\otimes E_{21}v\right). 
\nn\end{align}

With $m=2$, $(c(1),c(2))=(1,2)$ the cyclotomic Bethe equations \eqref{tbe} are
\begin{equation} \label{12bes}
0=\frac 1{w_1-z_1} + \frac 1{w_1-z_2} - \frac 2{w_1+w_2} - \frac {-1}{w_1-w_2}, \quad 
0=\frac 1{w_2+z_2} + \frac 1{w_2+z_1} - \frac 2{w_2+w_1} - \frac {-1}{w_2-w_1}.
\end{equation}
From these, the equations for $(c(1),c(2))=(1,1)$ are obtained by replacing $w_2$ by its twist-image $-w_2$. For $(c(1),c(2))=(2,2)$ one must send $w_1\mapsto -w_1$. For $(c(1),c(2))=(2,1)$ one must send $w_1\mapsto -w_1$ \emph{and} $w_2\mapsto -w_2$; this yields the same set of equations but with $w_1$ and $w_2$ exchanged, as it should. 
Up to one choice of branch in the square roots there is a unique solution to \eqref{12bes}, namely
\begin{equation*}
w_1 = \frac{z_1+z_2-\sqrt{(z_2-5z_1)(5z_2-z_1)}}{6},
\quad w_2 = -\frac{z_1+z_2+\sqrt{(z_2-5z_1)(5z_2-z_1)}}{6}.
\end{equation*}
The non-zero terms in the weight function $\psi_\Gamma\in L_{\omega_1}\otimes L_{\omega_1}$ are
\begin{align} \psi_\Gamma &= E_{32} E_{21} v \otimes v \left(\frac 1 {(w_2-w_1)(w_1-z_1)} 
                                                       + \frac 1 {(-w_1+w_2)(-w_2-z_1)}\right)\nn\\
             &{}            + v\otimes E_{32} E_{21} v  \left(\frac 1 {(w_2-w_1)(w_1-z_2)} 
                                                       + \frac 1 {(-w_1+w_2)(-w_2-z_2)}\right)\nn\\
             &{}            + E_{21} v\otimes E_{21} v \left( \frac{-1}{(w_1-z_1)(-w_2-z_2)} + 
                                                         \frac{-1}{(-w_2-z_1)(w_1-z_2)}\right)\nn\end{align}
and on substituting the solution above one finds eventually 
\be \psi_\Gamma =\frac 9 {(z_1+z_2)^2} \left( E_{32} E_{21} v \otimes v +  v\otimes E_{32} E_{21} v -E_{21} v\otimes E_{21} v\right).\ee
This again is singular for the $\mf{sl}_2$ of \eqref{sl20}, as expected. 
It is a straightforward check to verify that the eigenvalues \eqref{evalue Hi} computed from the Bethe ansatz agree with the result \eqref{direct evals} of the direct computation.

\appendix

\section{The $\Gamma$-equivariant Strong Residue Theorem}\label{sec:tert}
Suppose $A$ and $B$ are complex vector spaces equipped with a non-degenerate bilinear pairing $\langle\cdot,\cdot\rangle:A\times B\to \C$. 
(This covers, of course, the special case when $A=B$ and $\langle\cdot,\cdot\rangle$ is a non-degenerate bilinear form on $A$.) 

As in the main text, let $\Gamma$ be the cyclic group $\{1,\omega,\omega^2,\dots,\omega^{T-1}\}\subset \Cx$ acting on $\C$ by multiplication, and let  $\bm x=(x_1,\dots,x_p)$ be a collection of non-zero complex numbers whose $\Gamma$-orbits are disjoint.
Suppose $\Gamma$ acts on $A$ and $B$ and $\langle \omega\on a,\omega\on b\rangle = \langle a,b\rangle$ for all $a\in A, b\in B$. 
Define 
\be A_{\bm x}^{\Gamma,k} := \{ f\in A\otimes \C^\8_{\Gamma\bm x}(t): f(\omega t) = \omega^k (\omega\on f)(t)  \} \label{AGkdef}\ee
for each $k\in \Z_T$, and likewise $B_{\bm x}^{\Gamma,k}$. 
There is an injection
\be \iota: A_{\bm x}^{\Gamma,k}  \longhookrightarrow \bigoplus_{i=1}^p A\otimes \C((t-x_i));\quad
    f(t) \longmapsto (\iota_{t-x_1}f(t),\dots,\iota_{t-x_p}f(t)).\label{tin}\ee
We also allow the possibility of a pole at the point zero, which is special because it is the fixed point of the map $\C\to \C;z\mapsto \omega z$. Thus, let 
\be A_{\bm x,0}^{\Gamma,k} := \{ f\in A\otimes \C^\8_{\Gamma\bm x\cup\{0\}}(t): f(\omega t) = \omega^k (\omega\on f)(t)  \}.\ee 
Note that the image of $\iota_{t-0}$ is in $\left(A\otimes\C((t))\right)^{\Gamma,k} := \{ f\in A\otimes \C((t)): f(\omega t) = \omega^k (\omega\on f)(t) \}$. 

\begin{lem}[$\Gamma$-equivariant residue theorem]\label{terttwo}  $ $
\begin{enumerate}
\item\label{tert1} An element $(f_1,\dots,f_p)\in \bigoplus_{i=1}^p A\otimes \C((t-x_i))$ is in $\iota(A_{\bm x}^{\Gamma,k})$ if and only if $$0=\sum_{i=1}^p  \res_{t-x_i} \langle f_i, \, \iota_{t-x_i}(g)\rangle$$
for every $g\in B_{\bm x}^{\Gamma,-k-1}$.
\item\label{tert2} An element $(f_1,\dots,f_p,f_0)\in \bigoplus_{i=1}^p A\otimes \C((t-x_i))\oplus \left(A\otimes\C((t))\right)^{\Gamma,k}$ is in $(\iota,\iota_t)(A_{\bm x,0}^{\Gamma,k})$ if and only if $$0=\sum_{i=1}^p  \res_{t-x_i} \langle f_i, \, \iota_{t-x_i}(g)\rangle + \frac 1 T \res_t \langle f_0,\,\iota_t(g)\rangle$$
for every $g\in B_{\bm x,0}^{\Gamma,-k-1}$.
\end{enumerate}
\end{lem}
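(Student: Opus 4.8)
The plan is to deduce the $\Gamma$-equivariant Strong Residue Theorem from the ordinary (non-equivariant) Strong Residue Theorem of Lemma \ref{srt}, by ``averaging over $\Gamma$'' and carefully tracking the fixed point at the origin. For part \eqref{tert1}, I would first fix a set of orbit representatives: since the $\Gamma$-orbits $\Gamma x_1,\dots,\Gamma x_p$ are disjoint and $\Gamma$ acts freely on each (because $x_i\neq 0$), the full list of poles is $\{\omega^a x_i : 0\leq a\leq T-1,\ 1\leq i\leq p\}$, a collection of $pT$ distinct points. A function $f\in A\otimes\C^\8_{\Gamma\bm x}(t)$ is determined by its Laurent expansions at all these points, and the equivariance condition $f(\omega t)=\omega^k(\omega\cdot f)(t)$ says that the expansion at $\omega^a x_i$ is obtained from the expansion at $x_i$ by applying $\omega^a$ (in the appropriate twisted sense). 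So the data of an element of $A_{\bm x}^{\Gamma,k}$ is equivalent to the data of a tuple $(f_1,\dots,f_p)$ of local expansions at $x_1,\dots,x_p$ alone.

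The key step is then to apply the ordinary Strong Residue Theorem (the $A$-valued version, obtained from Lemma \ref{srt} componentwise by pairing against a basis of $A$ via $\langle\cdot,\cdot\rangle$) to the full $pT$-tuple of expansions at all points $\omega^a x_i$. That theorem says the $pT$-tuple globalizes to an element of $A\otimes\C^\8_{\Gamma\bm x}(t)$ iff $\sum_{i,a}\res_{t-\omega^a x_i}\langle (\omega^a\text{-transform of }f_i),\,\iota_{t-\omega^a x_i}(h)\rangle=0$ for all scalar-valued test functions $h\in\C^\8_{\Gamma\bm x}(t)$. Now I would run the argument in two directions. For the ``only if'' direction: given $g\in B_{\bm x}^{\Gamma,-k-1}$, one checks that $\langle f,g'\rangle$-type pairings reduce, after using the change of variables $t\mapsto\omega t$ and the equivariance of $\langle\cdot,\cdot\rangle$, to $T$ copies of the single sum $\sum_i\res_{t-x_i}\langle f_i,\iota_{t-x_i}(g)\rangle$; since the total (by ordinary SRT applied to the already-globalized $f$) vanishes, each copy does too. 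For the ``if'' direction: assume the reduced sum vanishes for all $g\in B_{\bm x}^{\Gamma,-k-1}$; I need to produce an arbitrary scalar test function $h\in\C^\8_{\Gamma\bm x}(t)$ from equivariant $B$-valued ones. The trick is that for any $b\in B$ and any $h_0\in\C^\8_{\Gamma\bm x}(t)$ with appropriate polar structure, the $\Gamma$-average $\sum_{a}\omega^{-(k+1)a}(\omega^a\cdot b)\otimes h_0(\omega^{-a}t)$ lies in $B_{\bm x}^{\Gamma,-k-1}$, and feeding these averaged test functions into the hypothesis reassembles exactly the full (non-averaged) sum appearing in ordinary SRT. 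Non-degeneracy of $\langle\cdot,\cdot\rangle$ lets one strip off $b$, so ordinary SRT applies and the tuple globalizes to an honest rational function, which one then checks is $\Gamma$-equivariant of the right weight (this last check is immediate from uniqueness of the globalization together with the equivariant structure of the input data).

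For part \eqref{tert2}, the origin is adjoined as an extra ``pole point'', but it is the fixed point of $z\mapsto\omega z$, so $\Gamma$ acts on the single point $0$ rather than on a free orbit of size $T$. I would mimic the above, now applying ordinary SRT to the $pT+1$-tuple consisting of the expansions at $\omega^a x_i$ together with the single expansion $f_0$ at the origin. When one runs the change-of-variables averaging argument, the contributions from the $x_i$ still come with multiplicity $T$, whereas the contribution from $0$ comes with multiplicity $1$ (because $\omega\cdot 0=0$); dividing through by $T$ produces exactly the asymmetric weighting $\sum_i\res_{t-x_i}(\dots)+\tfrac1T\res_t(\dots)$ in the statement. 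The constraint that $f_0$ already lies in $(A\otimes\C((t)))^{\Gamma,k}$ is precisely what is needed for the averaged version to be consistent at the origin, and conversely it is forced on any element of $A_{\bm x,0}^{\Gamma,k}$. The ``if'' direction again uses $\Gamma$-averaged test functions; one just has to be slightly careful that test functions $g\in B_{\bm x,0}^{\Gamma,-k-1}$ allowed to have a pole at $0$ still, after averaging, span enough of $\C^\8_{\Gamma\bm x\cup\{0\}}(t)$ to invoke ordinary SRT — and the extra freedom of a pole at the origin is exactly what supplies the test functions needed to pin down $f_0$.

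The main obstacle I anticipate is the bookkeeping in the ``if'' direction: verifying that the $\Gamma$-averages of a sufficiently rich family of scalar (or $B$-valued) test functions genuinely exhaust the test functions required by ordinary SRT, and in particular that the averaging map $\C^\8_{\Gamma\bm x\cup\{0\}}(t)\to B_{\bm x,0}^{\Gamma,-k-1}$ is surjective after pairing — equivalently that no information is lost when passing from the $pT+1$ individual local components to the $p+1$ orbit-representative components. This is where non-degeneracy of $\langle\cdot,\cdot\rangle$ and the disjointness/freeness of the orbits all get used simultaneously, and it is the step that would need the most care to write out cleanly; the change-of-variables identities and the equivariance-of-$\langle\cdot,\cdot\rangle$ manipulations in the ``only if'' direction are routine by comparison.
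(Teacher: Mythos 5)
Your proposal is correct, and your ``only if'' direction is exactly the paper's: sum the residues of $\langle f,g\rangle$ over \emph{all} the points $\omega^a x_i$ (which vanishes as in Lemma \ref{srt}), note that invariance of the pairing puts $\langle f,g\rangle$ in $\C^\8_{\Gamma\bm x}(t)^{\Gamma,-1}$, and use the change of variables $t\mapsto\omega t$ to see that the residues within each orbit coincide, so the total is $T$ times the representative sum. Where you diverge is the ``if'' direction. The paper does not route back through Lemma \ref{srt} at all: it first establishes the splitting $\bigoplus_i A\otimes\C((t-x_i))\cong \iota(A_{\bm x}^{\Gamma,k})\oplus\bigoplus_i A\otimes\C[[t-x_i]]$ by \emph{$\Gamma$-averaging the pole parts} of the $f_i$ into an explicit candidate $f\in A_{\bm x}^{\Gamma,k}$, subtracts $\iota f$, and then kills the remaining Taylor tails by pairing against the concentrated averaged test functions $g=\sum_{\alpha\in\Gamma}\alpha^{-k-1}(\alpha.b)/(\alpha^{-1}t-x_i)^{n+1}$ and invoking non-degeneracy of $\langle\cdot,\cdot\rangle$. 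Your route -- extend the data equivariantly to the full $pT$-point tuple, check that $\Gamma$-averaged test functions reproduce all the conditions of the ordinary strong residue theorem, invoke it, and recover equivariance of the globalization from uniqueness -- also works, but it carries two extra burdens the paper's organization avoids: the $A$-valued upgrade of Lemma \ref{srt} (your ``strip off $b$'' step, which is fine but needs the explicit pole-part candidate anyway to assemble the scalar globalizations), and the exhaustion argument you flag as the main obstacle. Note that the paper resolves that obstacle without any surjectivity statement for the averaging map: one never needs \emph{all} non-equivariant test functions, only, for each putative nonzero leading Taylor coefficient $a\otimes(t-x_i)^n$, one averaged test function concentrated at $x_i$ detecting it, which non-degeneracy supplies. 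Your treatment of part (\ref{tert2}) -- the orbit of the origin has size one, producing the $\tfrac1T$ weight, the a priori constraint $f_0\in(A\otimes\C((t)))^{\Gamma,k}$ being what makes the restricted (graded) supply of equivariant test functions at $0$ sufficient -- is exactly the content behind the paper's ``the proof of part (2) is similar'', and is right: the pairing restricts non-degenerately between matching $\Gamma$-eigenspaces of $A$ and $B$, so the equivariant test functions with poles at the origin pin down precisely the coefficients that $f_0$ is allowed to have. (Minor slip: the pairings in the lemma are $\langle f_i,\iota_{t-x_i}(g)\rangle$, with no derivative; the $g'$ in your write-up belongs to the cocycle computation of Corollary \ref{gGcor}, not here.)
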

\begin{proof}
Consider part (\ref{tert1}). For the ``only if'' direction, let $f\in A_{\bm x}^{\Gamma,k}$ and $g\in B_{\bm x}^{\Gamma,(-k-1)}$. In particular $f\in A\otimes \C^\8_{\Gamma\bm x}(t)$  and $g\in B\otimes \C^\8_{\Gamma\bm x}(t)$ so that, just as in the proof of Lemma \ref{srt},
\be \sum_{i = 1}^p \sum_{\alpha\in \Gamma} 
              \res_{t-\alpha x_i} \langle \iota_{t-\alpha x_i}f,\iota_{t-\alpha x_i} g \rangle
= \sum_{i=1}^p \sum_{\alpha\in \Gamma} \res_{t-\alpha x_i} \langle f,g\rangle = 0 \ee
(since this is the sum over all residues).
Now by the invariance of $\langle\cdot,\cdot\rangle$, we have that $\langle f,g\rangle \in \C^\8_{\Gamma\bm x}(t)^{\Gamma,-1}$ where  
\be \C^\8_{\Gamma\bm x}(t)^{\Gamma,k} := \{ h \in \C^\8_{\Gamma \bm x}(t): h(t) = \omega^{-k} h(\omega t) \}.\label{CGammadef}\ee 
For any $h\in \C^\8_{\Gamma\bm x}(t)$, $\res_{t-x_i} h(t)  
                                    = \omega^{-1} \res_{t- \omega x_i} h(\omega^{-1} t)$ .
Hence if $h\in \C^\8_{\Gamma\bm x}(t)^{\Gamma,k}$ then $\res_{t-x_i} h(t)  = \omega^{-1-k} \res_{t-\omega x_i} h(t)$. In this way one has that for all $\alpha\in \Gamma$, 
\be  \res_{t-\alpha x_i} \langle f, g \rangle
  =  \res_{t-x_i}  \langle f,g\rangle. \ee
Thus in fact $0 = T\sum_{i = 1}^p  \res_{t-x_i} \langle \iota_{t-x_i} f,\iota_{t-x_i} g \rangle$ as required.

Turning to the ``if'' direction, let us first establish that for all $k\in \Z_T$,
\be\bigoplus_{i=1}^p A\otimes\C((t-x_i))\cong_\C  \iota\!\left(A_{\bm x}^{\Gamma,k} (t)\right) \oplus
\bigoplus_{i=1}^p A\otimes\C[[t-x_i]].\label{gll}\ee
Indeed, let $f_i^-\in A\otimes (t-x_i)^{-1} \C[(t-x_i)^{-1}]$ denote the pole part of $f_i\in A\otimes \C((t-x_i))$ and set  $f := \sum_{i=1}^p \sum_{\alpha\in \Gamma}\alpha^{k} (\alpha. f_{i}^-)(\alpha^{-1} t-x_i)\in A_{\bm x}^{\Gamma,k}$. Then $(f_1,\dots,f_p)$ splits uniquely as the direct sum of the function $f$  and the tuple  $(f_1 - \iota_{t-x_1}f,\dots,f_p-\iota_{t-x_p}f) \in \bigoplus_{i=1}^pA\otimes\C[[t - x_i]]$. Now using the ``only if''  part we have that
\be \sum_{i=1}^p \res_{t-x_i} \langle f_i,  \iota_{t-x_i} g\rangle = \sum_{i=1}^p \res_{t-x_i} \langle\left(f_i - \iota_{t-x_i}f\right), \iota_{t-x_i} g\rangle.\label{mv}\ee
Note $f_i - \iota_{t-x_i}f \in A\otimes \C[[t - x_i]]$. Suppose for a contradiction that $f_i - \iota_{t-x_i}f$ is non-zero and let $a\otimes\left(t-x_i\right)^n$ be the leading term, with $a\in A$, $a\neq 0$, and $n\in \Z_{\geq 0}$. By picking $g = \sum_{\alpha\in \Gamma} \alpha^{-k-1} (\alpha.b) /(\alpha^{-1}t-x_i)^{n+1}\in B_{\bm x}^{\Gamma,-k-1}$ one sees that the vanishing of \eqref{mv} implies that $\langle a,b\rangle=0$ for all $b\in B$; and thus, since $\langle\cdot,\cdot\rangle$ is non-degenerate, that $a=0$: a contradiction. Hence in fact $f_i=\iota_{t-x_i}f$ for each $i$, as required. The proof of part (\ref{tert2}) is similar.
\end{proof}

\section{Cyclotomic Schechtman-Varchenko formula}\label{app:SV}
The key identity used in this section is what, following \cite{SV}, we will call the \emph{circle lemma}: for any $n\in \Z_{\geq 2}$, if $x_{i}$ are pairwise distinct complex numbers with $i$ running over $\Z_n$ (so $x_{n+i}\equiv x_i$) then
\begin{equation} 
0= \sum_{i\in \Z_n } \prod_{\substack{j\in \Z_n\\ j\neq i}} \frac 1 {x_j-x_{j+1}}.
\nn
\end{equation}
For example, the case $n=3$ is
\begin{equation}\nn
0= \frac 1 {(x_1-x_2)(x_2-x_3)} 
  + \frac 1 {(x_2-x_3)(x_3-x_1)} 
  + \frac 1 {(x_3-x_1)(x_1-x_2)}.
\end{equation}
There is a useful graphical representation of such identities, in which $(x_i - x_j)^{-1}$ is represented by a directed edge from a vertex labelled $j$ to a vertex labelled $i$:
\begin{tikzpicture}[baseline=-1mm]
  \draw[<-, thick, shorten <= 1.5mm, shorten >= 1.5mm] (0,0) -- (.7,0);
  \draw[thick] (0,0) node[left]{\small $i$} node[cross=.8mm] {};
  \draw[thick] (.7,0) node[right]{\small $j$} node[cross=.8mm] {};
\end{tikzpicture}.
Products of such factors are then represented by directed graphs, with one vertex for each variable that appears and one directed edge for each factor. In this way, the circle lemma becomes
\begin{equation}
0 = \sum_{i \in \mathbb{Z}_n}
\raisebox{-14mm}{
\begin{tikzpicture}
\newcommand*{\arr}{51.43}
  \draw[<-, thick, shorten <= 1mm, shorten >= 1mm] (0:10mm) arc (0 : 0 + 42 : 10mm);
  \draw[thick] (0-5:1cm) node[below right]{\small $i+1$} node[cross=.8mm] {};

  \draw[<-, thick, shorten <= 1mm, shorten >= 1mm] (\arr:10mm) arc (\arr : \arr + 42 : 10mm);
  \draw[thick] (\arr-5:1cm) node[above right]{\small $i+2$} node[cross=.8mm] {};
  
  \draw[<-, thick, loosely dashed, shorten <= 1mm, shorten >= 1mm] (2*\arr:1) arc (2*\arr : 3*\arr + 42 : 1);
  \draw[thick] (2*\arr-5:1cm) node[cross=.8mm] {};

  \draw[<-, thick, shorten <= 1mm, shorten >= 1mm] (4*\arr:10mm) arc (4*\arr : 4*\arr + 42 : 10mm);
  \draw[thick] (4*\arr-5:1cm) node[cross=.8mm] {};

  \draw[<-, thick, shorten <= 1mm, shorten >= 1mm] (5*\arr:10mm) arc (5*\arr : 5*\arr + 42 : 10mm);
  \draw[thick] (5*\arr-5:1cm) node[below=1.2pt]{\small $i-1$} node[cross=.8mm] {};

  \draw[thick] (6*\arr-5:1cm) node[below right]{\small $i$} node[cross=.8mm] {};
\end{tikzpicture}
}\label{circleid}
.\end{equation}

As in the main text, let $\omega$ be a primitive $T$th root of unity, $T\in \Z_{\geq 1}$. Let $z_1,\dots,z_N,w_1,\dots,w_m$ ($N\in \Z_{\geq 1}$, $m\in \Z_{\geq 0}$) be nonzero complex numbers whose orbits under the multiplicative action of $\omega$ are pairwise disjoint.

We shall use three types of nodes in order to distinguish between different types of points and their images under multiplication by $\omega$. Namely, we write
\be  \tikz[baseline=-.7mm]{\draw[thick] (0,0) node[left]{\small $i$} circle (0.5mm);} = w_i,\qquad 
     \tikz[baseline=-.7mm]{\draw[thick] (.7,0) node[left]{\small $i$} node[inner sep=0,draw,rectangle,minimum size =1.2mm]{};} = z_i;
\qquad  \ee
and also
\be  \tikz[baseline=-.7mm]{\draw[fill,thick] (0,0) node[left]{\small $i$} circle (0.5mm);} = \omega^{k_i}w_i,\ee
where $k_i$, $1\leq i\leq m$, are elements of $\Z_T$. 

Thus, for example,
\be\begin{tikzpicture}[baseline=-6mm]
  \draw[thick] (4,0) node[above right]{\small $1$} node[inner sep=0,draw,rectangle,minimum size =1.2mm]{};
  \draw[->, thick, shorten <= 1.5mm, shorten >= 1.5mm] (4,0) -- (3.6,-.6);
  \draw[thick,fill=black] (3.6,-.6) node[right]{\small $2$} circle (0.5mm);
  \draw[->, thick, shorten <= 1.5mm, shorten >= 1.5mm] (3.6,-.6) -- (3.2,-1.2);
  \draw[thick,fill=black] (3.2,-1.2) node[right]{\small $1$} circle (0.5mm);
\end{tikzpicture}
=
\frac{1}{(\omega^{{k}_1} w_1 - \omega^{{k}_2} w_2)(\omega^{{k}_2} w_2 - z_1)}\nn\ee and
\be\begin{tikzpicture}[baseline=-6mm]
  \draw[thick] (4,0) node[above right]{\small $3$} circle (0.5mm);
  \draw[->, thick, shorten <= 1.5mm, shorten >= 1.5mm] (4,0) -- (3.6,-.6);
  \draw[thick,fill=black] (3.6,-.6) node[left]{\small $2$} circle (0.5mm);
  \draw[->, thick, shorten <= 1.5mm, shorten >= 1.5mm] (3.6,-.6) -- (3.2,-1.2);
  \draw[thick,fill=black] (3.2,-1.2) node[left]{\small $1$} circle (0.5mm);
  \draw[<-, thick, shorten <= 1.5mm, shorten >= 1.5mm] (4,0) -- (4.4,-.6);
  \draw[thick,fill=black] (4.4,-.6) node[right]{\small $4$} circle (0.5mm);
  \draw[->, thick, shorten <= 1.5mm, shorten >= 1.5mm] (4.4,-.6)  .. controls (3.95,-.95) .. (3.2,-1.2);
\end{tikzpicture}
=
\frac{1}{(\omega^{{k}_1} w_1 - \omega^{{k}_4} w_4)(\omega^{{k}_1} w_1 - \omega^{{k}_2} w_2)(\omega^{{k}_2} w_2 - w_3)(w_3 - \omega^{{k}_4} w_4)}.\nn\ee

Recall from \S\ref{sec:bv} the set $P_{m, N}$ of all ordered partitions of $\{ 1, \ldots, m \}$ into $N$ parts. 
Let $\bm n^i$ denote the $i$th part of an element $\bm n\in P_{m,N}$, i.e.
\be \bm n = (\bm n^1; \bm n^2;\dots; \bm n^N) = (n^1_1, \ldots, n^1_{p_1}; n^2_1,\dots, n^2_{p_2}; \dots ; n^N_1, \ldots, n^N_{p_N}).\ee

Let $\mc A$ be an associative unital algebra, $\sigma:\mc A\to \mc A$ an automorphism whose order divides $T$, and $V$ a left $\mc A$-module. As in the main text, it is convenient to write $\check\sigma(x) := \omega\sigma(x)$.
\begin{prop} \label{prop: SV}

Suppose we are given a linear map $\tau: V^{\otimes N}\otimes \mc A^{\otimes m}\to \C$ such that for all $s$, $1\leq s\leq m$, we have
\begin{align} &\tau( x_1,\dots, x_N; y_1,\dots y_s, 1,\dots 1)\label{taup}\\
&=  \sum_{i=1}^N \sum_{j\in \Z_T} \frac{\tau(x_1,\dots,x_{i-1} ,\sigma^j(y_s)x_i,x_{i+1},\dots, x_N;y_1,\dots, y_{s-1},1,\dots,1)}{w_{s} - \omega^{-j}z_i} \nn\\
&+  \sum_{i=1}^{s-1} \sum_{j \in \Z_T}\frac{\tau(x_1,\dots, x_N;y_1,\dots, y_{i-1},\sigma^j(y_s) y_i-y_i\sigma^j(y_s),y_{i+1}, \dots, y_{s-1},1,\dots,1)}{w_{s} - \omega^{-j}w_{i}}\nn\end{align}
for all $x_1,\dots, x_N\in V$ and all $y_1,\dots,y_s\in \mc A$.
Then for any $x_1, \ldots, x_N \in V$ and $a_1, \ldots, a_m \in \mathcal{A}$ we have
\begin{equation} \label{SVformula}
\tau(x_1,\dots,x_N; a_1,\dots,a_m) = \sum_{{\bm n} \in P_{m, N}} \tau\big( \tilde x_1({\bm n}^1),\dots,\tilde x_N({\bm n}^N); 1,\dots,1 \big)
\end{equation}
where for ${\bm n} = (n_1, \ldots, n_p)$, with $n_1, \ldots, n_p \in \{ 1, \ldots, m \}$ all distinct, we denote ${k}_{\bm n} := ({k}_{n_1}, \ldots, {k}_{n_p})$ and define
\begin{equation} \label{tilde x}
\tilde x_i({\bm n}) := \sum_{{k}_{\bm n} \in \Z^p_T}
\left(\;\;
\begin{tikzpicture}[baseline=-12mm]
  \draw[thick] (0,0) node[right]{\small $i$} node[inner sep=0,draw,rectangle,minimum size =1.2mm]{};
  \draw[->, thick, shorten <= 1.5mm, shorten >= 1.5mm] (0,0) -- (0,-.6);
  \draw[thick,fill=black] (0,-.6) node[right]{\small $n_p$} circle (0.5mm);
  \draw[->, thick, dashed, shorten <= 1.5mm, shorten >= 1.5mm] (0,-.6) -- (0,-1.8);
  \draw[thick, fill=black] (0,-1.8) node[right]{\small $n_2$} circle (0.5mm);
  \draw[->, thick, shorten <= 1.5mm, shorten >= 1.5mm] (0,-1.8) -- (0,-2.4);
  \draw[thick,fill=black] (0,-2.4) node[right]{\small $n_1$} circle (0.5mm);
\end{tikzpicture}
\right)
\overrightarrow{\prod_{t=1}^p} \check{\sigma}^{{k}_{n_t}}\big(a_{n_t} \big) x_i.
\end{equation}
\end{prop}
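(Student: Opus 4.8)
The plan is to induct on $m$, peeling off the factors $a_s$ one at a time by repeated application of the hypothesis \eqref{taup}, and organizing the resulting sum of rational-function coefficients by means of the circle lemma \eqref{circleid}. For $m=0$ there is nothing to prove. So suppose \eqref{SVformula} holds for all values up to $m-1$; we must establish it for $m$. The starting point is to apply \eqref{taup} with $s=m$ to $\tau(x_1,\dots,x_N;a_1,\dots,a_m)$: this rewrites it as a sum over $i\in\{1,\dots,N\}$ and $j\in\Z_T$ of terms in which $\sigma^j(a_m)$ has been ``absorbed'' into the $i$th slot of $V^{\otimes N}$ (divided by $w_m-\omega^{-j}z_i$), plus a sum over $i\in\{1,\dots,m-1\}$ and $j\in\Z_T$ of terms in which the commutator $[\sigma^j(a_m),a_i]$ replaces $a_i$ (divided by $w_m-\omega^{-j}w_i$). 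To each of these terms, which now involves only $m-1$ of the original factors (the $i$th factor having been modified), we apply the inductive hypothesis. The key bookkeeping observation is that writing $\check\sigma^{k}(x)=\omega^k\sigma^k(x)$ converts the denominator $w_m-\omega^{-j}z_i = \omega^{-j}(\omega^j w_m - z_i)$ into a factor matching exactly the edge conventions of \eqref{tilde x} once we rename $j$ as $k_m$; similarly for the $w$-denominators, and the commutator structure $\sigma^j(a_m)a_i - a_i\sigma^j(a_m)$ is precisely what is needed to build the two possible ``insertion positions'' of node $m$ relative to node $i$ in a chain.

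The heart of the argument is then purely combinatorial-algebraic: one must check that, after expanding everything via the inductive hypothesis, the total coefficient of each fixed monomial $\overrightarrow{\prod}\check\sigma^{k_{n_t}}(a_{n_t})x_i$ (with a fixed ordered partition $\bm n\in P_{m,N}$ and fixed tuple $(k_1,\dots,k_m)\in\Z_T^m$) is the single diagram appearing in $\tilde x_i(\bm n^i)$, with no spurious extra terms. The way this works: inserting the label $m$ (carrying its twist index $k_m$) into a chain of length $p-1$ can be done in $p$ positions (at the top, attached to $z_i$; at the bottom; or between any two consecutive existing nodes), and the sum of the corresponding rational-function coefficients telescopes. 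Concretely, when $m$ is inserted between nodes $a$ and $b$ in the chain, the hypothesis \eqref{taup} produces via the commutator term a difference of two fractions $\frac{1}{w_m-\omega^{-k_m}w_a}$-type and $\frac{1}{w_m-\omega^{-k_m}w_b}$-type expressions; and when $m$ is absorbed directly into $z_i$ it produces the $\frac{1}{w_m-\omega^{-k_m}z_i}$ term. Summing over all insertion positions and recognizing the result as an instance of the circle lemma \eqref{circleid} applied to the variables $\{\omega^{k_m}w_m,\ \omega^{k_{n_1}}w_{n_1},\dots,\omega^{k_{n_{p}}}w_{n_p},z_i\}$ arranged around a cycle, one finds that all the cross terms cancel and precisely the single chain diagram with $m$ correctly positioned survives. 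This is exactly the diagrammatic manipulation that \cite{SV} carry out in the untwisted case; the only new ingredient is the presence of the twist powers $\check\sigma^{k}$ and the sum over $\Z_T$, which factor through the argument transparently because $\sigma$ is an algebra automorphism and $\check\sigma^{k_s}(a_s)$ is treated as an opaque symbol once the renaming $j\rightsquigarrow k_s$ is done.

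I expect the main obstacle to be purely notational: keeping track of which of the $m$ nodes is being peeled off, how it attaches to the existing chains in all $N$ slots, and matching the signs and denominators coming from the commutator term in \eqref{taup} with the edge orientations in \eqref{tilde x}. In particular one must be careful that inserting node $m$ into slot $i$ interacts only with the chain $\bm n^i$ and not with the other slots, and that the ``absorb into $z_i$'' branch of \eqref{taup} corresponds to attaching $m$ directly below the square node. A clean way to handle this is to fix the output monomial first — i.e. fix $\bm n$ and $(k_1,\dots,k_m)$ — locate the position $t_0$ of the label $m$ within the part $\bm n^i$ of $\bm n$, and verify that the coefficient of this monomial, as produced by one step of \eqref{taup} followed by induction, equals the coefficient read off from the chain diagram in $\tilde x_i(\bm n^i)$. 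The cancellation among the various ways $m$ could have been the last-peeled node then reduces to a single application of \eqref{circleid}, completing the induction. I would relegate the detailed diagrammatic verification of this cancellation to a lemma, stating it in the graphical language of the circle lemma so that the bookkeeping is visually manageable.
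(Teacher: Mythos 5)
Your plan is sound, and it is genuinely different from the route taken in the paper. The paper does not induct on $m$ with \eqref{SVformula} itself as the inductive statement; instead it strengthens the claim to the intermediate formula \eqref{SVformula2}, in which the $s$ still-unprocessed algebra slots carry two-sided chain decorations $\tilde y_i({\bm l},{\bm r})$ as in \eqref{tilde y} (products of twisted $a$'s on both the left and the right of $a_i$, reflecting the two terms of the commutator), and then performs a downward induction on $s$ from $m$ to $0$, resumming over the preimages of a fixed coarser partition by means of the full $n$-term circle lemma \eqref{circleid}. Your induction on $m$ avoids any strengthened statement: you apply the inductive hypothesis to the restricted map $\tau(\,\cdot\,;\,\cdot,\dots,\cdot,1)$ with composite arguments $[\sigma^j(a_m),a_i]$ in the slots (legitimate, since the hypothesis \eqref{taup} is assumed for arbitrary elements of $\mc A$ and restricts correctly to $m-1$ slots with the same $w_1,\dots,w_{m-1}$), and then regroup. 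What your approach buys is that only the three-term case of \eqref{circleid} is ever needed: after expanding the commutator, every word produced has $a_m$ adjacent to the label it was swapped onto (or to $x_\ell$), so there are in fact no cross terms to cancel; each term of the target \eqref{SVformula} with $m$ at a fixed position in a fixed chain receives exactly two contributions (commutator at the upper neighbour and commutator at the lower neighbour, or commutator plus the $z_\ell$-absorption term when $m$ is innermost), or a single directly matching contribution when $m$ is outermost or alone, and the two-term cases close by the partial-fraction identity. Two small points of care when you write this up: phrase the final step as a regrouping of the one-step-plus-induction expansion into the target sum (rather than as "comparing coefficients of monomials", since the monomials $\overrightarrow{\prod}\check\sigma^{k_{n_t}}(a_{n_t})x_i$ need not be linearly independent in $V$); and track the extra powers of $\omega$ explicitly, since $\check\sigma$ is not an algebra automorphism ($\check\sigma^k(xy)=\omega^{-k}\check\sigma^k(x)\check\sigma^k(y)$) -- the reparametrisation $k_m=k_i+j$ together with $\frac{\omega^{k_m}}{\omega^{k_m}w_m-\omega^{k_i}w_i}=\frac{1}{w_m-\omega^{-j}w_i}$ is exactly what makes the edge conventions of \eqref{tilde x} come out, as you anticipated. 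The price relative to the paper is that the cancellation lemma you defer is the actual content of the proof and must be checked case by case (interior, innermost, outermost, singleton), whereas the paper's heavier bookkeeping with partitions into $N+2s$ parts handles all positions uniformly in one resummation.
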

\begin{proof}
We prove a slightly more general result. Consider the set $P_{m - s, N + 2s}$ of ordered partitions of $\{ 1, \ldots, m - s \}$ into $N + 2s$ parts, for $0 \leq s \leq m$.  
It is convenient to write an element ${\bm n} \in P_{m - s, N + 2s}$ 
\begin{align}
{\bm n} & = (\bm n^1;\dots ;\bm n^N;\bm l^1; \bm r^1; \dots ; \bm l^s; \bm r^s ) \nn\\
&= (n^1_1, \ldots, n^1_{p_1}; \ldots; n^N_1, \ldots, n^N_{p_N}; l^1_1, \ldots, l^1_{L_1}; r^1_1, \ldots, r^1_{R_1}; \ldots; l^s_1, \ldots, l^s_{L_s}; r^s_1, \ldots, r^s_{R_s}),\label{big n}
\end{align} 
where $(p_1,\dots,p_N,L_1,R_1,\dots,L_s,R_s)$ is an $(N+2s)$-composition of $m-s$. 
In terms of this notation, we claim that for any $0 \leq s \leq m$ we have
\begin{align} \label{SVformula2}
\tau(x_1,&\dots,x_N; a_1,\dots,a_m) \notag\\
&= \sum_{{\bm n} \in P_{m - s, N + 2s}} \tau\big( \tilde x_1({\bm n}^1),\dots,\tilde x_N({\bm n}^N); \tilde y_1({\bm l^1}, {\bm r^1}),\dots,\tilde y_s({\bm l^s}, {\bm r^s}),1,\dots,1 \big)
\end{align}
where $\tilde x_i(\bm n)$ is as in \eqref{tilde x} and for ${\bm l} = (l_1, \ldots, l_L)$ and ${\bm r} = (r_1, \ldots, r_R)$, with $l_1, \ldots, l_L, r_1, \ldots, r_R \in \{ 1, \ldots, m-s \}$ all distinct we set
\begin{equation} \label{tilde y}
\tilde y_i({\bm l}, {\bm r}) := \sum_{\substack{{k}_{\bm l} \in \Z^L_T\\ {k}_{\bm r} \in \Z^R_T}}
\left(
\begin{tikzpicture}[baseline=-12mm]
  \draw[thick] (.1,0) node[above]{\small $i$} circle (0.5mm);
  \draw[->, thick, shorten <= 1.5mm, shorten >= 1.5mm] (.1,0) -- (-.2,-.6);
  \draw[thick,fill=black] (-.2,-.6) node[left]{\small $l_L$} circle (0.5mm);
  \draw[->, thick, dashed, shorten <= 1.5mm, shorten >= 1.5mm] (-.2,-.6) -- (-.2,-1.8);
  \draw[thick, fill=black] (-.2,-1.8) node[left]{\small $l_2$} circle (0.5mm);
  \draw[->, thick, shorten <= 1.5mm, shorten >= 1.5mm] (-.2,-1.8) -- (-.2,-2.4);
  \draw[thick,fill=black] (-.2,-2.4) node[left]{\small $l_1$} circle (0.5mm);

  \draw[<-, thick, shorten <= 1.5mm, shorten >= 1.5mm] (.1,0) -- (.4,-.6);
  \draw[thick,fill=black] (.4,-.6) node[right]{\small $r_R$} circle (0.5mm);
  \draw[<-, thick, dashed, shorten <= 1.5mm, shorten >= 1.5mm] (.4,-.6) -- (.4,-1.8);
  \draw[thick, fill=black] (.4,-1.8) node[right]{\small $r_2$} circle (0.5mm);
  \draw[<-, thick, shorten <= 1.5mm, shorten >= 1.5mm] (.4,-1.8) -- (.4,-2.4);
  \draw[thick,fill=black] (.4,-2.4) node[right]{\small $r_1$} circle (0.5mm);
\end{tikzpicture}
\right)
\overrightarrow{\prod_{t=1}^L} \check{\sigma}^{{k}_{l_t}}\big(a_{l_t} \big) \,\,a_i\,\, \overleftarrow{\prod_{u=1}^R} \check{\sigma}^{{k}_{r_u}}(a_{r_u})
\end{equation}
for $1 \leq i \leq s$.
The statement of the proposition corresponds to the particular case when $s = 0$.

We proceed to show \eqref{SVformula2} by induction on $s$ (starting at $s=m$ and working downwards). When $s = m$ the statement \eqref{SVformula2} is empty. So suppose it holds for some $0 < s \leq m$. Applying \eqref{taup} to each term in the sum on the right hand side and abbreviating $\tilde{x}_i({\bm n}^i)$ and $\tilde{y}_i({\bm l}^{i}, {\bm r}^{i})$ as $\tilde{x}_i$ and $\tilde{y}_i$ respectively, we have
\begin{align} \label{s swapped}
&\sum_{{\bm n} \in P_{m - s, N + 2s}} \tau( \tilde{x}_1, \dots, \tilde{x}_N; \tilde{y}_1,\dots \tilde{y}_s, 1,\dots 1) \notag\\
&= \sum_{i=1}^N\sum_{{\bm n} \in P_{m - s, N + 2s}}  \tau\bigg( \tilde{x}_1,\dots, \tilde{x}_{i-1}, \sum_{j \in \Z_T} \frac{\check{\sigma}^j\big(\tilde{y}_s \big) \tilde{x}_i}{\omega^j w_s - z_i}, \tilde{x}_{i+1}, \dots, \tilde{x}_N; \tilde{y}_1,\dots, \tilde{y}_{s-1},1,\dots,1 \bigg) \notag\\
&\quad + \sum_{i=1}^{s-1} \sum_{{\bm n} \in P_{m - s, N + 2s}} \tau\bigg( \tilde{x}_1,\dots, \tilde{x}_N; \tilde{y}_1,\dots, \tilde{y}_{i-1}, \sum_{j \in \Z_T} \frac{\check{\sigma}^j\big(\tilde{y}_s \big) \tilde{y}_i}{\omega^j w_s - w_i}, \tilde{y}_{i+1}, \dots, \tilde{y}_{s-1},1,\dots,1 \bigg) \notag\\
&\quad + \sum_{i=1}^{s-1}\sum_{{\bm n} \in P_{m - s, N + 2s}}  \tau\bigg( \tilde{x}_1,\dots, \tilde{x}_N; \tilde{y}_1,\dots, \tilde{y}_{i-1}, \sum_{j \in \Z_T} \frac{\tilde{y}_i \check{\sigma}^j\big(\tilde{y}_s \big)}{w_i - \omega^j w_s}, \tilde{y}_{i+1}, \dots, \tilde{y}_{s-1},1,\dots,1 \bigg).
\end{align}

In the first line of the right-hand side of \eqref{s swapped}, consider the $i$th term in the sum. Given an ordered partition ${\bm n} \in P_{m-s, N + 2s}$, let  
\be \tilde{\bm n} = (\tilde{\bm n}^1;\dots ;\tilde{\bm n}^N;\tilde{\bm l}^1; \tilde{\bm r}^1; \dots ; \tilde{\bm l}^{s-1}; \tilde{\bm r}^{s-1} ) \in P_{m-s+1,N+2s-2}  \label{tildendef}\ee
be the ordered partition whose $i$th part is 
\be  \tilde{\bm n}^i = (\tilde n^i_1,\dots,\tilde n^i_{\tilde p_i}) 
:= (l^s_1, \ldots, l^s_{L_s}, s, r^s_{R_s}, \dots r^s_1, n^i_{1},\dots, n^i_{p_i})\nn\ee
(so $\tilde p_i=p_i+1+L_s+R_s$) and whose remaining parts are unaltered:
\begin{alignat}{2} \tilde{\bm n}^j &:= \bm n^j& \quad  &1\leq j\leq N,\,j\neq i,\nn\\
 \tilde{\bm l}^t &:= \bm l^t, &\quad &1\leq t\leq s-1,\nn\\
  \tilde{\bm r}^t &:= \bm r^t, &\quad &1\leq t\leq s-1. \nn
\end{alignat}
One can show from definitions \eqref{tilde x} and \eqref{tilde y} that 
\begin{equation*}
\sum_{j \in \Z_T} \frac{\check{\sigma}^j\big(\tilde{y}_s(\bm l^s, \bm r^s) \big) \tilde{x}_i(\bm n^i)}{\omega^j w_s - z_i} =- \sum_{{k}_{\tilde{\bm n}^i} \in \Z^{\tilde p_i}_T}
\left(
\begin{tikzpicture}[baseline=-12mm]
  \draw[thick,fill=black] (.1,0) node[above]{\small $s$} circle (0.5mm);
  \draw[->, thick, shorten <= 1.5mm, shorten >= 1.5mm] (.1,0) -- (-.2,-.6);
  \draw[thick,fill=black] (-.2,-.6) node[left]{\small $\tilde{n}^i_{L_s}$} circle (0.5mm);
  \draw[->, thick, dashed, shorten <= 1.5mm, shorten >= 1.5mm] (-.2,-.6) -- (-.2,-1.8);
  \draw[thick, fill=black] (-.2,-1.8) node[left]{\small $\tilde{n}^i_2$} circle (0.5mm);
  \draw[->, thick, shorten <= 1.5mm, shorten >= 1.5mm] (-.2,-1.8) -- (-.2,-2.4);
  \draw[thick,fill=black] (-.2,-2.4) node[left]{\small $\tilde{n}^i_1$} circle (0.5mm);

  \draw[<-, thick, shorten <= 1.5mm, shorten >= 1.5mm] (.1,0) -- (.4,-.6);
  \draw[thick,fill=black] (.4,-.6) node[right]{\small $\tilde{n}^i_{L_s+2}$} circle (0.5mm);
  \draw[<-, thick, dashed, shorten <= 1.5mm, shorten >= 1.5mm] (.4,-.6) -- (.4,-1.8);
  \draw[thick, fill=black] (.4,-1.8) node[right]{\small $\tilde{n}^i_{L_s+R_s}$} circle (0.5mm);
  \draw[<-, thick, shorten <= 1.5mm, shorten >= 1.5mm] (.4,-1.8) -- (.4,-2.4);
  \draw[thick,fill=black] (.4,-2.4) node[right]{\small $\tilde{n}^i_{L_s+R_s+1}$} circle (0.5mm);
 
  \draw[thick] (2.2,0) node[above right]{\small $i$} node[inner sep=0,draw,rectangle,minimum size =1.2mm]{};
  \draw[->, thick, shorten <= 1.5mm, shorten >= 1.5mm] (2.2,0) -- (2.2,-.6);
  \draw[thick,fill=black] (2.2,-.6) node[right]{\small $\tilde{n}^i_{\tilde p_i}$} circle (0.5mm);
  \draw[->, thick, dashed, shorten <= 1.5mm, shorten >= 1.5mm] (2.2,-.6) -- (2.2,-1.8);
  \draw[thick, fill=black] (2.2,-1.8) node[right]{\small $\tilde{n}^i_{L_s+R_s+3}$} circle (0.5mm);
  \draw[->, thick, shorten <= 1.5mm, shorten >= 1.5mm] (2.2,-1.8) -- (2.2,-2.4);
  \draw[thick,fill=black] (2.2,-2.4) node[right]{\small $\tilde{n}^i_{L_s+R_s+2}$} circle (0.5mm);

  \draw[->, thick, shorten <= 1.5mm, shorten >= 1.5mm] (.1,0) -- (2.2,0);
\end{tikzpicture}
\right)
\overrightarrow{\prod_{t=1}^{\tilde p_i}} \check{\sigma}^{{k}_{\tilde{n}^i_t}}(a_{\tilde{n}^i_t}) x_i.
\end{equation*}
Now, the ordered partition $\tilde{\bm n}$ belongs to the set of those elements of $P_{m-s+1,N+2s-2}$ for which $s$ belongs to the $i$th part. Call this subset $P^{(i)}_{m-s+1,N+2s-2}\subset P_{m-s+1,N+2s-2}$.
The map $$P_{m-s,N+2s}\to \tilde P^{(i)}_{m-s+1,N+2s-2}; \,\,\bm n\mapsto \tilde{\bm n}$$ is not injective, and indeed summing over the pre-images of a fixed $\tilde{\bm n}$ amounts to summing over pairs $(R_s,p_i)$ with $R_s+p_i$ is fixed. When one performs this sum on the expression above and applies the circle lemma \eqref{circleid}, one finds 
\begin{equation*}
\sum_{{k}_{\tilde{\bm n}^i} \in \Z^{\tilde p_i}_T}
\left(
\begin{tikzpicture}[baseline=-12mm]
  \draw[thick,fill=black] (.1,0) node[above]{\small $s$} circle (0.5mm);
  \draw[->, thick, shorten <= 1.5mm, shorten >= 1.5mm] (.1,0) -- (-.2,-.6);
  \draw[thick,fill=black] (-.2,-.6) node[left]{\small $\tilde{n}^i_{L_s}$} circle (0.5mm);
  \draw[->, thick, dashed, shorten <= 1.5mm, shorten >= 1.5mm] (-.2,-.6) -- (-.2,-1.8);
  \draw[thick, fill=black] (-.2,-1.8) node[left]{\small $\tilde{n}^i_2$} circle (0.5mm);
  \draw[->, thick, shorten <= 1.5mm, shorten >= 1.5mm] (-.2,-1.8) -- (-.2,-2.4);
  \draw[thick,fill=black] (-.2,-2.4) node[left]{\small $\tilde{n}^i_1$} circle (0.5mm);

  \draw[<-, thick, shorten <= 1.5mm, shorten >= 1.5mm] (.1,0) -- (.4,-.6);
  \draw[thick,fill=black] (.4,-.6) node[right]{\small $\tilde{n}^i_{L_s+2}$} circle (0.5mm);
  \draw[<-, thick, dashed, shorten <= 1.5mm, shorten >= 1.5mm] (.4,-.6) -- (.4,-1.8);
  \draw[thick, fill=black] (.4,-1.8) node[right]{\small $ $} circle (0.5mm);
  \draw[<-, thick, shorten <= 1.5mm, shorten >= 1.5mm] (.4,-1.8) -- (.4,-2.4);
  \draw[thick,fill=black] (.4,-2.4) node[left]{\small $ $} circle (0.5mm);
 
  \draw[thick] (1.7,0) node[above right]{\small $i$} node[inner sep=0,draw,rectangle,minimum size =1.2mm]{};
  \draw[->, thick, shorten <= 1.5mm, shorten >= 1.5mm] (1.7,0) -- (1.7,-.6);
  \draw[thick,fill=black] (1.7,-.6) node[right]{\small $\tilde{n}^i_{\tilde p_i}$} circle (0.5mm);
  \draw[->, thick, dashed, shorten <= 1.5mm, shorten >= 1.5mm] (1.7,-.6) -- (1.7,-1.8);
  \draw[thick, fill=black] (1.7,-1.8) node[right]{\small $ $} circle (0.5mm);
  \draw[->, thick, shorten <= 1.5mm, shorten >= 1.5mm] (1.7,-1.8) -- (1.7,-2.4);
  \draw[thick,fill=black] (1.7,-2.4) node[right]{\small $ $} circle (0.5mm);

  \draw[<-, thick, shorten <= 1.5mm, shorten >= 1.5mm] (.4,-2.4) -- (1.7,-2.4);
\end{tikzpicture}
\right)
\overrightarrow{\prod_{t=1}^{\tilde p_i}} \check{\sigma}^{{k}_{\tilde{n}^i_t}}(a_{\tilde{n}^i_t}) x_i = \tilde{x}_i(\tilde{\bm n}^i).
\end{equation*}
Putting all this together, one has that the first line on the right hand side of \eqref{s swapped} is
\be\nn\sum_{i=1}^N \sum_{\tilde{\bm n}\in P^{(i)}_{m-s+1,N+2s-2}} 
 \tau\big( \tilde x_1(\tilde{\bm n}^1),\dots,\tilde x_N(\tilde{\bm n}^N);
 \tilde y_1(\tilde{\bm l}^{1}, \tilde{\bm r}^{1}),\dots,\tilde y_{s-1}(\tilde{\bm l}^{s-1}, \tilde{\bm r}^{s-1}),1,\dots,1 \big).\ee
These are some of the required terms; it remains to show that the second and third lines on the right of \eqref{s swapped} yield the remaining terms in the sum over $\tilde{\bm n}\in P_{m-s-1,N+2s-2}$, i.e. those in which $s$ belongs to one of the last $2s-2$ parts.

Consider the $i$th term of the sum on the second line on the right of \eqref{s swapped}. This time, given an ordered partition ${\bm n} \in P_{m-s, N + 2s}$, we now let  
\be \tilde{\bm n} = (\tilde{\bm n}^1;\dots ;\tilde{\bm n}^N;\tilde{\bm l}^1; \tilde{\bm r}^1; \dots ; \tilde{\bm l}^{s-1}; \tilde{\bm r}^{s-1} ) \in P_{m-s+1,N+2s-2}\nn \ee
be the ordered partition with 
\be  \tilde{\bm l}^i = (\tilde l^i_1,\dots,\tilde l^i_{\tilde L_i}) 
:= (l^s_1, \ldots, l^s_{L_s}, s, r^s_{R_s}, \dots r^s_1, l^i_{1},\dots, l^i_{L_i})\nn\ee
(so $\tilde L_i = L_i + 1 + L_s+R_s$) and with the remaining parts unaltered:
\begin{alignat}{2} \tilde{\bm n}^j &:= \bm n^j& \quad  &1\leq j\leq N,\nn\\
 \tilde{\bm l}^t &:= \bm l^t, &\quad &1\leq t\leq s-1,\, t\neq i,\nn\\
  \tilde{\bm r}^t &:= \bm r^t, &\quad &1\leq t\leq s-1. \nn
\end{alignat}
Using the definition \eqref{tilde y} one can show that
\begin{eqnarray*}
\sum_{j \in \Z_T} \frac{\check{\sigma}^j\big(\tilde{y}_s(\bm l^s, \bm r^s) \big) 
       \tilde{y}_i(\bm l^{i},\bm r^i)}{\omega^j w_s - w_i}
= - \sum_{{k}_{\tilde{\bm l}^{i}} \in \Z^{\tilde L_i}_T}
\left(
\begin{tikzpicture}[baseline=-12mm]
  \draw[thick,fill=black] (-.1,0) node[above]{\small $s$} circle (0.5mm);
  \draw[->, thick, shorten <= 1.5mm, shorten >= 1.5mm] (-.1,0) -- (-.4,-.6);
  \draw[thick,fill=black] (-.4,-.6) node[left]{\small $\tilde{l}^{i}_{L_s}$} circle (0.5mm);
  \draw[->, thick, dashed, shorten <= 1.5mm, shorten >= 1.5mm] (-.4,-.6) -- (-.4,-1.8);
  \draw[thick, fill=black] (-.4,-1.8) node[left]{\small $\tilde{l}^{i}_2$} circle (0.5mm);
  \draw[->, thick, shorten <= 1.5mm, shorten >= 1.5mm] (-.4,-1.8) -- (-.4,-2.4);
  \draw[thick,fill=black] (-.4,-2.4) node[left]{\small $\tilde{l}^{i}_1$} circle (0.5mm);

  \draw[<-, thick, shorten <= 1.5mm, shorten >= 1.5mm] (-.1,0) -- (.2,-.6);
  \draw[thick,fill=black] (.2,-.6) node[right]{\small $\tilde{l}^{i}_{L_s+2}$} circle (0.5mm);
  \draw[<-, thick, dashed, shorten <= 1.5mm, shorten >= 1.5mm] (.2,-.6) -- (.2,-1.8);
  \draw[thick, fill=black] (.2,-1.8) node[right]{\small $\tilde{l}^{i}_{L_s+R_s}$} circle (0.5mm);
  \draw[<-, thick, shorten <= 1.5mm, shorten >= 1.5mm] (.2,-1.8) -- (.2,-2.4);
  \draw[thick,fill=black] (.2,-2.4) node[right]{\small $\tilde{l}^{i}_{L_s+R_s+1}$} circle (0.5mm);
  
  \draw[thick] (2.7,0) node[above]{\small $i$} circle (0.5mm);
  \draw[->, thick, shorten <= 1.5mm, shorten >= 1.5mm] (2.7,0) -- (1.9,-.6);
  \draw[thick,fill=black] (1.9,-.6) node[right=2mm]{\small $\tilde{l}^{i}_{\tilde L_i}$} circle (0.5mm);
  \draw[->, thick, dashed, shorten <= 1.5mm, shorten >= 1.5mm] (1.9,-.6) -- (1.9,-1.8);
  \draw[thick, fill=black] (1.9,-1.8) node[right]{\small $\tilde{l}^{i}_{L_s+R_s+3}$} circle (0.5mm);
  \draw[->, thick, shorten <= 1.5mm, shorten >= 1.5mm] (1.9,-1.8) -- (1.9,-2.4);
  \draw[thick,fill=black] (1.9,-2.4) node[right]{\small $\tilde{n}^{l,i}_{L_s+R_s+2}$} circle (0.5mm);

  \draw[<-, thick, shorten <= 1.5mm, shorten >= 1.5mm] (2.7,0) -- (3.6,-.6);
  \draw[thick,fill=black] (3.6,-.6) node[right]{\small $\tilde{r}^{i}_{R_i}$} circle (0.5mm);
  \draw[<-, thick, dashed, shorten <= 1.5mm, shorten >= 1.5mm] (3.6,-.6) -- (3.6,-1.8);
  \draw[thick, fill=black] (3.6,-1.8) node[right]{\small $\tilde{r}^{i}_2$} circle (0.5mm);
  \draw[<-, thick, shorten <= 1.5mm, shorten >= 1.5mm] (3.6,-1.8) -- (3.6,-2.4);
  \draw[thick,fill=black] (3.6,-2.4) node[right]{\small $\tilde{r}^{i}_1$} circle (0.5mm);

  \draw[->, thick, shorten <= 1.5mm, shorten >= 1.5mm] (-.1,0) -- (2.7,0);
\end{tikzpicture}
\right)
\overrightarrow{\prod_{t=1}^{\tilde L_i}} \check{\sigma}^{{k}_{\,\tilde{l}^{i}_t}}\big(a_{\tilde{l}^{i}_t}\big)
a_i
\overleftarrow{\prod_{u=1}^{R_i}} \check{\sigma}^{{k}_{\tilde{r}^{i}_u}}\big(a_{\tilde{r}^{i}_u}\big).
\end{eqnarray*}
Arguing as before, when one sums this expression over those ordered partitions $\bm n$ that yield a given $\tilde{\bm n}$ and applies the circle lemma \eqref{circleid}, one indeed finds
\be  \sum_{{k}_{\tilde{\bm l}^{i}} \in \Z^{\tilde L_i}_T}
\left(
\begin{tikzpicture}[baseline=-12mm]
  \draw[thick,fill=black] (-.1,0) node[above]{\small $s$} circle (0.5mm);
  \draw[->, thick, shorten <= 1.5mm, shorten >= 1.5mm] (-.1,0) -- (-.4,-.6);
  \draw[thick,fill=black] (-.4,-.6) node[left]{\small $\tilde{l}^{i}_{L_s}$} circle (0.5mm);
  \draw[->, thick, dashed, shorten <= 1.5mm, shorten >= 1.5mm] (-.4,-.6) -- (-.4,-1.8);
  \draw[thick, fill=black] (-.4,-1.8) node[left]{\small $\tilde{l}^{i}_2$} circle (0.5mm);
  \draw[->, thick, shorten <= 1.5mm, shorten >= 1.5mm] (-.4,-1.8) -- (-.4,-2.4);
  \draw[thick,fill=black] (-.4,-2.4) node[left]{\small $\tilde{l}^{i}_1$} circle (0.5mm);

  \draw[<-, thick, shorten <= 1.5mm, shorten >= 1.5mm] (-.1,0) -- (.2,-.6);
  \draw[thick,fill=black] (.2,-.6) node[right]{\small $\tilde{l}^{i}_{L_s+2}$} circle (0.5mm);
  \draw[<-, thick, dashed, shorten <= 1.5mm, shorten >= 1.5mm] (.2,-.6) -- (.2,-1.8);
  \draw[thick, fill=black] (.2,-1.8) node[right]{\small $ $} circle (0.5mm);
  \draw[<-, thick, shorten <= 1.5mm, shorten >= 1.5mm] (.2,-1.8) -- (.2,-2.4);
  \draw[thick,fill=black] (.2,-2.4) node[right]{\small $ $} circle (0.5mm);
  
  \draw[thick] (2.0,0) node[above]{\small $i$} circle (0.5mm);
  \draw[->, thick, shorten <= 1.5mm, shorten >= 1.5mm] (2.0,0) -- (1.6,-.6);
  \draw[thick,fill=black] (1.6,-.6) node[right]{\small $\tilde{l}^{i}_{\tilde L_i}$} circle (0.5mm);
  \draw[->, thick, dashed, shorten <= 1.5mm, shorten >= 1.5mm] (1.6,-.6) -- (1.6,-1.8);
  \draw[thick, fill=black] (1.6,-1.8) node[right]{\small $ $} circle (0.5mm);
  \draw[->, thick, shorten <= 1.5mm, shorten >= 1.5mm] (1.6,-1.8) -- (1.6,-2.4);
  \draw[thick,fill=black] (1.6,-2.4) node[right]{\small $ $} circle (0.5mm);

  \draw[<-, thick, shorten <= 1.5mm, shorten >= 1.5mm] (2.0,0) -- (2.4,-.6);
  \draw[thick,fill=black] (2.4,-.6) node[right]{\small $\tilde{r}^{i}_{R_i}$} circle (0.5mm);
  \draw[<-, thick, dashed, shorten <= 1.5mm, shorten >= 1.5mm] (2.4,-.6) -- (2.4,-1.8);
  \draw[thick, fill=black] (2.4,-1.8) node[right]{\small $\tilde{r}^{i}_2$} circle (0.5mm);
  \draw[<-, thick, shorten <= 1.5mm, shorten >= 1.5mm] (2.4,-1.8) -- (2.4,-2.4);
  \draw[thick,fill=black] (2.4,-2.4) node[right]{\small $\tilde{r}^{i}_1$} circle (0.5mm);

  \draw[->, thick, shorten <= 1.5mm, shorten >= 1.5mm] (1.6,-2.4) -- (.2,-2.4);
\end{tikzpicture}
\right)
\overrightarrow{\prod_{t=1}^{\tilde L_i}} \check{\sigma}^{{k}_{\,\tilde{l}^{i}_t}}\big(a_{\tilde{l}^{i}_t}\big)
a_i
\overleftarrow{\prod_{u=1}^{R_i}} \check{\sigma}^{{k}_{\tilde{r}^{i}_u}}\big(a_{\tilde{r}^{i}_u}\big)
= \tilde{y}_i(\tilde{\bm l}^{i}, \tilde{\bm r}^{i}).
\ee
Similar reasoning applies to the sum in the last line of \eqref{s swapped}, and we obtain finally
\begin{align*}
\tau(x_1,&\dots,x_N; a_1,\dots,a_m) \notag\\
&= \sum_{\tilde{\bm n} \in P_{m-s+1, N+2s-2}} \tau\big( \tilde x_1(\tilde{\bm n}^1),\dots,\tilde x_N(\tilde{\bm n}^N);
 \tilde y_1(\tilde{\bm l}^{1}, \tilde{\bm r}^{1}),\dots,\tilde y_{s-1}(\tilde{\bm l}^{s-1}, \tilde{\bm r}^{s-1}),1,\dots,1 \big),
\end{align*}
which concludes the proof of the inductive step.
\end{proof}

\begin{cor}\label{SVcor}
Let  $\mf a$ be a Lie algebra, $\sigma:\mf a\to \mf a$ an automorphism of $\mf a$ whose order divides $T$, and $V$ an $\mf a$-module.
Suppose $\overline\tau: V^{\otimes N} \otimes (\mf a\oplus \C)^{\otimes m}  \to \C$ is a linear map such that for each $s$, $1\leq s\leq m$,
\begin{align} \label{swappingtau} \overline\tau( x_1,\dots, x_N, y_1,\dots y_s,1,\dots,1)
&=  \sum_{i=1}^N \sum_{j\in \Z_T} \frac{\overline\tau(x_1,\dots,x_{i-1} ,\sigma^j(y_s) x_i,x_{i+1},\dots x_N,y_1,\dots, y_{s-1},1,\dots,1)}{w_{s} - \omega^{-j}z_{i}} \\\nn
&+  \sum_{i=1}^{s-1}\sum_{j\in \Z_T} \frac{\overline\tau(x_1,\dots, x_N,y_1,\dots, y_{i-1},[\sigma^{j}(y_s), y_i] ,y_{i+1}, \dots, y_{s-1},1,\dots,1)}{w_{s} - \omega^{-j}w_{i}}\end{align}
for all $x_1,\dots, x_N\in V$ and all $y_1,\dots,y_s\in \mf a$. Then formula \eqref{SVformula} holds for $\overline\tau$, for all $x_1,\dots,x_N\in V$ and all $a_1,\dots,a_m\in \mf a$. 
\end{cor}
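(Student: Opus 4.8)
The plan is to deduce this from Proposition~\ref{prop: SV} by passing to the universal enveloping algebra. Take $\mc A := U(\mf a)$, which is an associative unital algebra; the automorphism $\sigma$ of $\mf a$ extends uniquely to an automorphism of $U(\mf a)$, still of order dividing $T$, and the $\mf a$-module $V$ is, equivalently, a left $U(\mf a)$-module. The key point is that for $y,z\in\mf a\subset U(\mf a)$ the element $yz-zy$ formed in $U(\mf a)$ is simply the bracket $[y,z]\in\mf a$, so the swapping relation \eqref{swappingtau} satisfied by $\overline\tau$ is precisely the restriction of \eqref{taup} to the case in which all of the auxiliary arguments $y_1,\dots,y_s$ lie in $\mf a$.

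First I would extend $\overline\tau$ to a linear map $\tau\colon V^{\otimes N}\otimes U(\mf a)^{\otimes m}\to\C$ satisfying \eqref{taup} for all $s$. This is done by the recursion that \eqref{taup} itself dictates: set $\tau(x_1,\dots,x_N;1,\dots,1):=\overline\tau(x_1,\dots,x_N;1,\dots,1)$ for $x_i\in V$, and define $\tau$ on a tuple whose last non-trivial auxiliary slot is the $s$-th one by the right-hand side of \eqref{taup}, which involves $\tau$ only on tuples with strictly fewer non-trivial auxiliary slots. One then checks: (i) the recursion is well defined and produces a multilinear map, since the right-hand side of \eqref{taup} is manifestly multilinear in $x_1,\dots,x_N,y_1,\dots,y_s$; (ii) $\tau$ satisfies \eqref{taup} for every $s$ by construction; and (iii) on auxiliary arguments lying in $\mf a$ the map $\tau$ agrees with $\overline\tau$. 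Statement (iii) is proved by induction on the number of non-trivial auxiliary slots: for such arguments the element $\sigma^j(y_s)y_i-y_i\sigma^j(y_s)=[\sigma^j(y_s),y_i]$ occurring in \eqref{taup} again lies in $\mf a$ and the $V$-site modifications $\sigma^j(y_s)x_i$ again lie in $V$, so \eqref{taup} reduces to \eqref{swappingtau}, which $\overline\tau$ satisfies by hypothesis, while the inductive hypothesis handles the smaller tuples appearing on the right.

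With such a $\tau$ in hand, Proposition~\ref{prop: SV} applies and yields \eqref{SVformula} for $\tau$. Evaluating that identity at $a_1,\dots,a_m\in\mf a$ finishes the argument: on the left, $\tau(x_1,\dots,x_N;a_1,\dots,a_m)=\overline\tau(x_1,\dots,x_N;a_1,\dots,a_m)$ by (iii); on the right, each $\tilde x_i(\bm n^i)$ of \eqref{tilde x} is a sum of elements of the form $\check\sigma^{k_1}(a_{n_1})\cdots\check\sigma^{k_p}(a_{n_p})x_i$ with every $\check\sigma^{k_t}(a_{n_t})$ in $\mf a$, hence lies in $V$, so $\tau\big(\tilde x_1(\bm n^1),\dots,\tilde x_N(\bm n^N);1,\dots,1\big)=\overline\tau\big(\tilde x_1(\bm n^1),\dots,\tilde x_N(\bm n^N);1,\dots,1\big)$ again by (iii). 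This is exactly \eqref{SVformula} for $\overline\tau$.

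The step I expect to be the main obstacle is (iii) together with the careful set-up of the extension in the second paragraph: one must organize the bookkeeping of which of the $m$ auxiliary slots currently carry genuine elements of $\mf a$ (or of $U(\mf a)$) and which carry the formal placeholder $1$, so that at each stage of the recursion exactly one instance of \eqref{taup}/\eqref{swappingtau} is applicable, and one must keep track of the fact that although the swaps over the $V$-sites legitimately produce products in $U(\mf a)$, the auxiliary entries obtained by iterating the swapping relation never leave $\mf a$. Should this extension argument prove awkward, the fallback is to re-run the inductive proof of Proposition~\ref{prop: SV} directly while keeping the auxiliary arguments in $\mf a$ throughout; then the intermediate partition sets $P_{m-s,N+2s}$ collapse to $P_{m-s,N+s}$ (a single chain per pending auxiliary rather than a separate left and right chain), and the circle lemma \eqref{circleid} is invoked exactly as in that proof.
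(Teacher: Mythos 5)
Your proposal is correct and follows essentially the same route as the paper: extend $\overline\tau$ recursively via \eqref{taup} to a map $\tau$ on $V^{\otimes N}\otimes U(\mf a)^{\otimes m}$, observe that its restriction to $(\mf a\oplus\C)^{\otimes m}$ coincides with $\overline\tau$, and then invoke Proposition \ref{prop: SV}. Your extra bookkeeping (well-definedness of the recursion and the inductive check that auxiliary entries stay in $\mf a$) is exactly the detail the paper leaves implicit, so no gap remains.
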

\begin{proof}
Any such $\overline\tau$ extends to a map $\tau:V^{\otimes N}\otimes U(\mf a)^{\otimes m}\to \C$ satisfying \eqref{taup}. Indeed, one may set $\tau(x_1,\dots,x_N;1,\dots,1) := \overline\tau(x_1,\dots,x_N;1,\dots,1)$ and then use the relations \eqref{taup} recursively to \emph{define} a map $\tau: V^{\otimes N}\otimes U(\mf a)^{\otimes m}\to \C$. By  definition of the universal envelope $U(\mf a)$ the restriction of $\tau$ to $V^{\otimes N} \otimes (\mf a\oplus \C)^{\otimes m}$ coincides with $\overline\tau$. The result follows.
\end{proof}

\def\cprime{$'$}
\providecommand{\bysame}{\leavevmode\hbox to3em{\hrulefill}\thinspace}
\providecommand{\MR}{\relax\ifhmode\unskip\space\fi MR }
\providecommand{\MRhref}[2]{%
  \href{http://www.ams.org/mathscinet-getitem?mr=#1}{#2}
}
\providecommand{\href}[2]{#2}

\end{document}